\theoremstyle{plain}
\newtheorem{thm}{Theorem}[section]
\newtheorem{prop}[thm]{Proposition}
\newtheorem{cor}[thm]{Corollary}
\newtheorem{lem}[thm]{Lemma}
\theoremstyle{definition}
\newtheorem{df}{Definition}[section]
\newtheorem{ques}[thm]{Question}
\newtheorem{conj}[thm]{Conjecture}
\theoremstyle{remark}
\newtheorem{rmk}{Remark}[section]
\newtheorem*{ac}{Acknowledgements}
\newcommand{\zz}{\mathbb{Z}}
\newcommand{\rr}{\mathbb{R}}
\newcommand{\yoemph}[1]{\emph{#1}}
\newcommand{\yoomega}{\omega_{0}}
\newcommand{\supp}{\text{{\rm supp}}}
\newcommand{\yosub}{\subseteq}
\DeclareMathOperator{\yodiam}{diam}
\newcommand{\yocpm}[1]{\mathrm{CPM}(#1)}
\newcommand{\met}[1]{\mathrm{Met}(#1)}
\DeclareMathOperator{\metdis}{\mathcal{D}}
\newcommand{\ult}[2]{\mathrm{UMet}(#1; #2)}
\DeclareMathOperator{\umetdis}{\mathcal{UD}}
\newcommand{\compm}[1]{\mathrm{Comp}(#1)}
\newcommand{\yocase}[2]{Case #1.~[#2]:}
\newcommand{\oumetbd}[1]{\mathrm{BMet}(#1)}
\newcommand{\yomainmap}{E}
\newcommand{\yomainmapf}{\widetilde{\yomainmap}}
\newcommand{\yofmo}{\mathbf{L}_{fin}^{\infty}}
\newcommand{\yofmosp}[1]{\yofmo(#1)}
\newcommand{\yofmodisn}{\mathbf{D}_{L^{\infty}}}
\newcommand{\yoccl}{\mathfrak{C}}
\newcommand{\yocclq}[1]{\yoccl(#1)}
\newcommand{\yoslameto}{\mathbf{M}_{<\infty}}
\newcommand{\yoslamet}[1]{\yoslameto(#1)}
\newcommand{\yotopemb}[2]{\mathrm{I}^{#1}_{#2}}
\newcommand{\yoosmisf}{\mathrm{F}}
\newcommand{\yoosmisfsp}[1]{\yoosmisf\!\left(#1\right)}
\newcommand{\yoosmisfspq}[2]{\yoosmisf^{#2}\left(#1\right)}
\newcommand{\yoosmisfdis}[1]{\yoosmisf\!\left[#1\right]}
\newcommand{\yoosmisfemb}[1]{\yotopemb{\yoosmisf}{#1}}
\newcommand{\yoosmisa}{\$}
\newcommand{\yoosmisasp}[1]{\yoosmisa\!\left(#1\right)}
\newcommand{\yoosmisadis}[1]{\yoosmisa\!\left[#1\right]}
\newcommand{\yoosmisaemb}[1]{\yotopemb{\yoosmisa}{#1}}
\newcommand{\yoosmisc}{\mathrm{L}}
\newcommand{\yoosmiscsp}[1]{\yoosmisc\!\left(#1\right)}
\newcommand{\yoosmiscspw}[1]{\yoosmisc^{0}\!\left(#1\right)}
\newcommand{\yoosmiscspc}[2]{\yoosmisc^{#2}\!\left(#1\right)}
\newcommand{\yoosmiscdis}[1]{\yoosmisc\!\left[#1\right]}
\newcommand{\yoosmiscemb}[1]{\yotopemb{\yoosmisc}{#1}}
\newcommand{\yowas}{\mathrm{W}_{1}}
\newcommand{\yowasspw}[1]{\mathcal{RP}\!\left(#1\right)}
\newcommand{\yowassp}[1]{\yowas\!\left(#1\right)}
\newcommand{\yowasspc}[2]{\yowas^{#2}\!\left(#1\right)}
\newcommand{\yowasdis}[1]{\yowas\!\left[#1\right]}
\newcommand{\yowasemb}[1]{\yotopemb{\yowas}{#1}}
\newcommand{\yoosmisdw}{\yoosmisa\yowas}
\newcommand{\yoosmisdwsp}[1]{\yoosmisdw\!\left(#1\right)}
\newcommand{\yoosmisdwdis}[1]{\yoosmisdw\!\left[#1\right]}
\newcommand{\yoosmislldw}{\yoosmisc\yoosmisa\yowas}
\newcommand{\yoosmislldwsp}[1]{\yoosmislldw\!\left(#1\right)}
\newcommand{\yoosmislldwdis}[1]{\yoosmislldw\!\left[#1\right]}
\newcommand{\yolip}{\mathbf{Lip}_{1}}
\newcommand{\yolipsp}[1]{\yolip\left(#1\right)}
\newcommand{\yocoeff}[1]{\gamma_{#1}}
\newcommand{\yocoefff}[1]{\xi_{#1}}
\newcommand{\yodmaso}{\boldsymbol{\delta}}
\newcommand{\yodmas}[1]{\yodmaso_{#1}}
\newcommand{\yomasq}{\Psi}
\newcommand{\yowdwd}{\mathcal{WD}}
\newcommand{\yowdc}{\mathcal{O}}
\newcommand{\yowdp}{\mathbf{y}}
\newcommand{\yowda}{\mathbf{a}}
\newcommand{\yowdg}{\varphi}
\newcommand{\yowds}{\sigma}
\newcommand{\yowdce}[1]{\mathcal{O}_{#1}}
\newcommand{\yowdpe}[1]{\yowdp_{#1}}
\newcommand{\yowdae}[1]{\yowda_{#1}}
\newcommand{\yowdge}[1]{\yowdg_{#1}}
\newcommand{\yowdse}[1]{\yowds_{#1}}
\newcommand{\yosubmap}{\mathbf{J}}
\newcommand{\yohsmapl}{\mathbf{H}}
\newcommand{\yohsmapr}{\mathbf{h}}
\newcommand{\yoidx}{\mathbf{Ind}}
\newcommand{\yoprmet}[1]{\mathrm{PrMet}(#1)}
\newcommand{\yomext}[1]{\widehat{#1}}
\newcommand{\yocompsp}[2]{\widetilde{#1}^{#2}}
\newcommand{\yocompdis}[1]{{#1}^{\diamondsuit}}
\newcommand{\yodbddsym}{\mathbf{Me}_{+}\!\mathbf{C}}
\newcommand{\yodbddsp}[2]{\yodbddsym\!\left(#1, #2\right)}
\newcommand{\yowapp}[2]{W_{#1, #2}}
\newcommand{\yomapp}[2]{M_{#1, #2}}
\newcommand{\yold}[1]{\mathbf{CCD}(#1)}
\newcommand{\yomsp}{Q}
\newcommand{\yomea}{\lambda}
\newcommand{\yokarimap}{\psi}
\newcommand{\yokarimapq}{\Psi}
\begin{document}

\title[An isometric  extensor of    metrics]{
An isometric  extensor of    metrics}

\author{Yoshito Ishiki}

\address[Yoshito Ishiki]{
Department of Mathematical Sciences
\endgraf
Tokyo Metropolitan University
\endgraf
Minami-osawa Hachioji Tokyo 192-0397
\endgraf
Japan}

\email{ishiki-yoshito@tmu.ac.jp}

\subjclass[2020]{Primary 54E35; Secondary 54C20, 54D35}
\keywords{Spaces of metrics, Extension of metrics}


\date{\today}

\begin{abstract}
In this paper, for a metrizable space $Z$, we consider the space of metrics that generate the same topology of $Z$, and that space of metrics is equipped with the supremum metrics. For a metrizable space $X$ and a closed subset $A$ of it, we construct a map $E$ from the space of metrics on $A$ into the space of metrics on $X$ such that  $E$ is an extensor  of metrics and preserves the supremum metrics between metrics. 
\end{abstract}

\maketitle

\tableofcontents

\section{Intoroduction}\label{sec:intro}
\subsection{Backgrounds}
For a metrizable space 
$Z$, 
we denote by 
$\met{Z}$
the 
set of all 
metrics on 
$Z$ 
that generate the same topology of 
$Z$. 
We write
$\oumetbd{Z}$
as the set of all bounded metrics in 
$\met{Z}$. 
We define the 
supremum 
metric
$\metdis_{Z}$
 on 
$\met{Z}$ by 
\[
\metdis_{Z}(d, e)=\sup_{x, y\in Z}|d(x, y)-e(x, y)|. 
\]
Take  a closed subset
$A$ 
of
 $Z$. 
It was Felix  Hausdorff 
\cite{Ha1930}
who first 
approached 
the extension problem of 
metrics on 
$A$ 
to 
$Z$, 
and proved that 
for each 
$d\in \met{Z}$, 
there exists
$D\in \met{Z}$
such that 
$D|_{A^{2}}=d$. 
Independently, 
in later years, 
Bing 
\cite{MR0024609} 
also proved the
same theorem
(see also 
\cite{MR0230285}, 
\cite{MR0049543}, 
and 
\cite{MR321026}). 
Based on  Hausdorff's result, 
several mathematicians 
explored 
developments of it. 
For variants of Hausdorff's extension theorem, 
see, for example,  
\cite{Ishiki2020int}, 
\cite{MR3135687}, 
\cite{MR2854677}. 
For extensions of ultrametrics
(non-Archimedean metrics), 
see 
\cite{tymchatyn2005note}, 
\cite{stasyuk2009continuous}, 
\cite{Ishiki2021ultra}, 
\cite{MR4527953}, 
\cite{Ishiki2022highpower}, 
and 
\cite{Ishiki2022factor}. 

Hausdorff's extension theorem 
only states that 
we can find  a extended metric
$D\in \met{X}$ 
of each  
$d\in \met{A}$. 
Thus, 
the next point of 
interest is 
whether these extensions of metrics 
can be taken simultaneously. 
In other words, 
can we construct 
an  extensor 
 $\met{A}\to \met{X}$
 that is continuous 
 with respect to 
  some topologies  on 
 spaces of metrics?
As long as the author knows, 
in 1981, 
Nguyen Van Khue and Nguyen To Nhu
\cite{NN1981}
 first
constructed 
simultaneous 
 extensions
$\Phi_{1}, \Phi_{2}\colon \met{A}\to \met{X}$, 
which satisfy that 
$\Phi_{1}$ 
is
$20$-Lipschitz 
with respect to 
the supremum 
metrics 
on spaces of metrics, 
and 
$\Phi_{2}$ 
is continuous with
 respect to the topologies of 
point-wise convergence
and preserving orders.

An extensor 
$\Phi\colon \oumetbd{A}\to \oumetbd{Z}$ 
on spaces of bounded metics 
is 
\textit{isometric}  
(with respect to the supremum 
metrics) 
 if 
we have 
$\metdis_{A}(d, e)=\metdis_{Z}(\Phi(d), \Phi(e))$
for all 
$d, e\in \met{A}$. 
Such  extensors on bounded metrics 
 have been obtained  
by 
Bessaga
 \cite{MR1211761}, 
 Banakh
 \cite{MR1811849}, 
 Pikhurko
 \cite{MR1692801}, 
and 
 Zarichnyi
 \cite{zarichnyi1996regular}.

As a non-Archimedean 
analogue of spaces of metrics, 
we can define 
the space $\ult{X}{R}$ of 
$R$-valued ultrametrics 
on 
$X$
and 
the non-Archimedean 
supremum metric 
$\umetdis_{X}^{R}$, 
where 
$R$ is a 
subset of $[0, \infty)$
with 
$0\in R$. 
In this paper, 
we omit the details of 
these concepts
(see 
\cite{Ishiki2021ultra} 
and 
\cite{Ishiki2022factor}). 
 For an ultrametrizable space 
 $X$, and a closed subset 
 $A$ 
 of
  $X$, 
 the author 
constructed 
an isometric 
extensor from 
$(\ult{A}{R}, \umetdis_{X}^{R})$
into 
$(\ult{X}{R}, \umetdis_{X}^{R})$
(see \cite[Theorem 4.7]{Ishiki2022factor}).

 In this paper, 
we shall 
construct 
an isometric extensor 
$\yomainmap\colon\met{A}\to \met{X}$
of metrics. 
Remark that, in contrast to the previous results, 
our extensor 
$\yomainmap$
can be applied  to
not only bounded metric but also  
 unbounded metrics, 
 and 
our main result  is 
an Archimedean
 analogue of 
\cite[Theorem 4.7]{Ishiki2022factor}.

\subsection{Main result}
For a metrizable space 
$X$, 
we denote by 
$\compm{X}$
the set of all complete metrics
$d\in \met{X}$. 
Of course, 
we have 
$\compm{X}\neq \emptyset$
if and only if 
$X$ is completely metrizable. 
We also 
denote by 
$\yocpm{X}$
the set of all 
pseudometrics 
$d\colon X\times X\to [0, \infty)$
that are continuous as maps between 
$X\times X$
 and 
 $[0, \infty)$. 
The set 
 $\yocpm{X}$
 is equipped with 
 the supremum metric 
 $\metdis_{X}$, 
 where 
 we use the same symbol 
 as the metric
 $\metdis_{X}$ on $\met{X}$. 
 By the definitions, 
we have 
$\met{X}\yosub\yocpm{X}$. 
The author showed that 
the set 
$\met{X}$
 is comeager in 
$\yocpm{X}$ 
(see \cite[Theorem 1.3]{Ishiki2024smbaire}). 

We prove the following 
generalization of 
 Hausdorff's metric extension 
theorem. 
\begin{thm}\label{thm:main1}
Let 
$X$ 
be 
a metrizable space, 
and 
$A$
 be a closed subset of 
 $X$. 
 Then there exists a map 
 $\yomainmap\colon \met{A}\to \met{X}$
 such that 
 \begin{enumerate}[label=\textup{(\arabic*)}]
 \item\label{item:m1ext}
for every 
$d\in \met{A}$
 we have 
$\yomainmap(d)|_{A^{2}}=d$; 
\item\label{item:m1isom}
the map 
$\yomainmap$
  is 
an isometric embedding, 
i.e., 
for every pair  
$d, e\in \met{A}$, 
we have 
\[
\metdis_{A}(d, e)=\metdis_{X}(\yomainmap(d),
\yomainmap(e));
\]
\item\label{item:m1bounded}
we have 
$\yomainmap(\oumetbd{A})\yosub 
\oumetbd{X}$;

\item\label{item:m1comp}
we have 
$\yomainmap(\compm{A})\yosub \compm{X}$. 
 \end{enumerate}
Furthermore,
 we also obtain an isometric extensor 
 $\yomainmapf\colon \yocpm{A}\to \yocpm{X}$
 of pseudometrics
such that 
$\yomainmapf|_{\met{A}}=\yomainmap$. 
In this setting, we have 
$\yomainmapf(\met{A})=\yomainmapf(\yocpm{A})\cap \met{X}$. 
Thus, the image 
$\yomainmap(\met{A})$ is 
closed in 
$\met{X}$. 
\end{thm}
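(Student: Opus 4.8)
The plan is to reduce the isometry statement \ref{item:m1isom} to two more tractable properties of the extension operator: an exactness property and a pointwise Lipschitz estimate. First I would fix once and for all a bounded background metric $\rho\in\met{X}$ (chosen complete whenever $X$ is completely metrizable, with a view toward \ref{item:m1comp}) and set $r(x)=\inf_{a\in A}\rho(x,a)$, the $\rho$-distance to $A$; since $A$ is closed, $r$ is continuous and vanishes exactly on $A$. I would then define $\yomainmap(d)$ by an explicit formula in which the variable metric $d$ enters only through a single ``transit term'' $d(c,c')$, with $c,c'\in A$, placed under an infimum, the remaining ingredients being built from $\rho$ and $r$ alone. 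The point of this single-$d$-edge structure is that it forces the pointwise estimate $|\yomainmap(d)(x,y)-\yomainmap(e)(x,y)|\le \metdis_{A}(d,e)$ for all $x,y\in X$: replacing $d$ by $e$ changes the bracketed expression uniformly by at most $\sup_{c,c'\in A}|d(c,c')-e(c,c')|=\metdis_{A}(d,e)$, and an infimum (or minimum of branches) of uniformly perturbed quantities is itself perturbed by no more than that amount, the $\rho$- and $r$-parts being common to $d$ and $e$. Taking the supremum over $X\times X$ gives $\metdis_{X}(\yomainmap(d),\yomainmap(e))\le\metdis_{A}(d,e)$, while the reverse inequality is immediate from \ref{item:m1ext}, since the supremum over $X\times X$ dominates the one over $A\times A$, on which $\yomainmap(d)-\yomainmap(e)=d-e$. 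Thus \ref{item:m1ext} together with the pointwise $1$-Lipschitz bound yields \ref{item:m1isom} at once.

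The core of the argument, and what I expect to be the main obstacle, is to design the formula so that $\yomainmap(d)$ is simultaneously (i) an \emph{exact} extension, $\yomainmap(d)|_{A^{2}}=d$, (ii) a genuine metric generating the topology of $X$, in particular with zero self-distances and continuous, and (iii) of the single-transit-edge shape demanded above. These pull against one another: exactness forbids any $d$-free ``shortcut'' that would join two points of $A$ more cheaply than by $d$, whereas zero self-distance and topological compatibility off $A$ seem to call for a $d$-free direct term, which is exactly the kind of shortcut exactness prohibits. The resolution must use the radial function $r$ to separate the two regimes: the $d$-free part is allowed to govern distances only in directions transverse to $A$, where $r$ varies, while all motion parallel to $A$ is charged to the transit term $d(c,c')$. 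As $x,y\to A$ the transverse contribution dies and the formula limits to $d$, giving exactness, while for $x,y$ bounded away from $A$ the $\rho$-part keeps $\yomainmap(d)$ uniformly equivalent to $\rho$ and hence topology-generating. I would verify the triangle inequality by a case analysis according to whether the three points lie in $A$ or not, repeatedly collapsing chains of the form $d(c,a)+d(a,c')$ to $d(c,c')$ via the triangle inequality for $d$ so that no admissible path accrues more than one $d$-edge; this collapse is also what keeps the construction within the single-edge regime needed for \ref{item:m1isom}.

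The remaining clauses should then follow from the same formula. For \ref{item:m1bounded}, if $d$ is bounded then the transit term is bounded by $\yodiam_{d}(A)$ and the $\rho$-part is bounded because $\rho$ is, so $\yomainmap(d)$ is bounded. For \ref{item:m1comp}, with $\rho$ complete I would show that every $\yomainmap(d)$-Cauchy sequence converges by splitting it into a part staying $\rho$-bounded away from $A$, which is $\rho$-Cauchy and converges by completeness of $\rho$, and a part tending to $A$, whose near-$A$ description makes it $d$-Cauchy in $A$ and hence convergent by completeness of $d$; continuity of $r$ and the matching of the two regimes glue the limits. Finally, the pseudometric extensor $\yomainmapf\colon\yocpm{A}\to\yocpm{X}$ is obtained by feeding a continuous pseudometric $d$ into the very same formula: the transit-edge estimate and the triangle-inequality case analysis never use positivity of $d$, only symmetry, vanishing on the diagonal, the triangle inequality and continuity, so $\yomainmapf(d)$ is a continuous pseudometric on $X$ restricting to $d$ on $A^{2}$, the map is again $\metdis$-isometric by the identical pointwise argument, and $\yomainmapf|_{\met{A}}=\yomainmap$ holds since the formula is literally the same. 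I expect the genuine difficulties to be concentrated in the triangle-inequality verification and in making the transverse/parallel split rigorous for an arbitrary closed $A$, which in general admits no collar neighbourhood and no continuous nearest-point retraction.
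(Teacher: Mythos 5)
There is a genuine gap, and it sits exactly where you located the ``main obstacle'': the single-transit-edge formula with $d$-free ingredients that your whole argument rests on provably cannot exist, so the difficulty is not merely the triangle-inequality bookkeeping you defer. Write your formula as
\[
\yomainmap(d)(x,y)=\min\Bigl\{\Theta(x,y),\ \inf_{c, c'\in A}\bigl[\phi(x,c)+d(c,c')+\phi(y,c')\bigr]\Bigr\},
\]
where $\Theta$ and $\phi$ are built from $\rho$ and $r$ alone (hence do not depend on $d$) and, as your exactness/limiting requirement demands, $\phi(a,a)=0$ for $a\in A$. Fix $z\in X\setminus A$ and $c, c'\in A$. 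Entering at $c$ gives $\yomainmap(d)(c,z)\le \phi(c,c)+d(c,c)+\phi(z,c)=\phi(z,c)$, and likewise $\yomainmap(d)(z,c')\le\phi(z,c')$; combining this with exactness and the triangle inequality forces
\[
d(c,c')\ \le\ \phi(z,c)+\phi(z,c')\qquad\text{for \emph{every} admissible }d .
\]
But $\sup_{d\in\met{A}}d(c,c')=\infty$ for $c\neq c'$ (replace $d$ by $\lambda d$), and this remains true even inside a single component $\yocclq{d}$, since $d+\lambda\min(d,1)\in\yocclq{d}$ for all $\lambda>0$ --- so making $\phi$ depend on the class, as the paper's choice of $w$ with $w|_{A^{2}}=m$ does, cannot rescue the shape of your formula. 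Hence $\phi(z,\cdot)$ is finite at \emph{at most one} point $\pi(z)\in A$, and finiteness of $\yomainmap(d)$ forces exactly one (a parallel scaling argument shows $\Theta$ must be $+\infty$ whenever an endpoint lies in $A$). Consequently $\yomainmap(d)(z,a)=\phi(z,\pi(z))+d(\pi(z),a)$ for $a\in A$. Every element of $\met{X}$ is continuous for the topology of $X$; applying this simultaneously to $d$ and to $d+\min(d,1)$ and subtracting shows $d(\pi(z_{k}),a)\to d(\pi(z_{0}),a)$ whenever $z_{k}\to z_{0}$ in $X\setminus A$, so (take $a=\pi(z_{0})$) $\pi$ is continuous on $X\setminus A$; and $\yomainmap(d)(z,a)\to 0$ as $z\to a\in A$ forces $\pi(z)\to a$. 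Thus $\pi$ extends the identity of $A$ to a continuous retraction $X\to A$. For $X$ the closed disk and $A$ its boundary circle no such retraction exists, so no extensor of your proposed shape can satisfy \ref{item:m1ext} and land in $\met{X}$ --- your closing worry that $A$ ``admits no continuous nearest-point retraction'' is not a technical nuisance but a fatal obstruction to the single-edge design.

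It is instructive to see how the paper keeps your (correct) $1$-Lipschitz perturbation mechanism while escaping this trap: it replaces the single $d$-edge by a \emph{unit mass} of $d$-edges. Points of $X\setminus A$ are sent, via Whitney--Dugundji partitions of unity, to probability measures $\sum_{O}\yowdge{O,s}(x)\,\yodmas{a_{O,s}}$ on $A$, and $d$ enters only through the $1$-Wasserstein distance $\yowasdis{d}$, which is still $1$-Lipschitz in $d$ because the total transported mass is $1$ (the ``osmotic'' isometry of Proposition \ref{prop:wasisom}); a measure-valued soft retraction into $\yowassp{A}$ exists even when a genuine retraction $X\to A$ does not. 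The auxiliary $\ell^{1}$-factor and the $L^{1}$-average over the scales $s$ then restore injectivity, the topology of $X$, and completeness --- tasks your sketch assigns to the direct $\rho$-term, which the argument above shows cannot coexist with exactness. Separately, your single fixed bounded $\rho$ for all of $\met{A}$ is already inconsistent with unbounded $d$ and would in any case have to be replaced by a per-component extension $w$ of a chosen $m\in\yoccl$, as in the paper; but, as shown, even that repair leaves the single-transit-edge formula impossible.
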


Our proof of
the main result is 
based on  
the idea of extending spaces and
extending homeomorphisms, 
which was used in 
\cite{zarichnyi1996regular},  
and can go back to 
\cite{Ha1930}, 
\cite{Hausdorff1938}, 
and 
\cite{Ku1938}. 
In other words, 
for a metric space 
$Z$
and for a closed subset 
$A$ 
of 
$Z$, 
we will find an extension  metrizable space 
$L$ 
of 
$A$
with
a
 topological embedding  
$I\colon A\to L$
such that every metric 
$d\in \met{A}$
can be naturally 
 extended to a metric on 
 $L$
  through 
 $I$. 
In this situation, we
will construct 
an topological embedding 
$J\colon Z\to L$ 
such that 
$J|_{A}=I$ and 
define 
an extended metric 
$\yomainmap(d)$
by  a pullback metric
$E(d)=J^{*}d$
 induced by 
$J$. 

The organization of this paper is as follows: 
In 
Section
 \ref{sec:pre}, 
we review several basic concepts on metrics spaces. 
We also explain 
the constructions of 
the
 $\ell^{1}$-products, 
the 
$1$-Wasserstein spaces, 
and 
spaces of measurable maps. 
For example, 
we give a 
characterization 
of the topology of 
$1$-Wasserstein space in 
a similar manner of the Portmanteau theorem even 
if an underlying space is not assumed to be complete. 
In the end of 
Section 
\ref{sec:pre}, 
we summarize these constructions 
as an 
\textit{osmotic construction},  
which 
 is a method to obtain 
extension spaces of a metrizable space
where metrics on given spaces are naturally extended. 
The author hopes that this notion would 
 be helpful to 
 improve  
 our main result in the future. 
In 
Section
 \ref{sec:wd}, 
we review the classical 
discussion called the 
Whitney--Dugundji
decomposition, 
which plays a key 
role of 
the proof of 
the main result. 
The whole of 
Section 
\ref{sec:proof}
is devoted to the proof of 
Theorem 
\ref{thm:main1}. 
Section 
\ref{sec:ques}
exhibits 
several questions on 
extensors  of metrics. 
\begin{ac}
This work was supported by JSPS 
KAKENHI Grant Number 
JP24KJ0182. 

The author would like to 
thank Tatsuya Goto for the advices on
properties of  Borel sets. 
\end{ac}


\section{Preliminaries}\label{sec:pre}
\subsection{Basic notations}
First, we review the 
basic notions and notations on metric spaces. 
For a metric space 
$(Z, w)$, 
we denote by 
$U(x, r; w)$
the open ball centered at 
$x\in Z$
with radius 
$r\in (0, \infty)$. 
For a subset 
$A\yosub Z$, 
and for
 $x\in Z$
we define 
$w(x, A)=\inf_{a\in A}w(x, a)$. 
Note that the function 
$x\mapsto w(x, A)$ 
is
$1$-Lipschitz. 
For a subset 
$S$
of 
$Z$, 
we represent 
$\yodiam_{w}(S)$
the diameter of 
$S$ 
with respect to 
$w$. 

In this 
paper, 
we often denote by 
$\yoomega$
the set of all non-negative integers
when we regard  the set of non-negative integers as 
a (discrete) space. 
Of course, 
we have 
$\yoomega=\zz_{\ge 0}$
 as a set. 
When emphasizing 
the set of 
 integers as an index set, 
we rather use 
$\zz_{\ge 0}$
than 
$\yoomega$. 

\subsection{Components of spaces of metrics}\label{subsec:compo}
For a metrizable space 
$Z$, 
and for 
$d, e\in \met{Z}$, 
we write 
$d\sim e$
 if 
$\metdis_{Z}(d, e)<\infty$. 
Then 
``$\sim$''
becomes an 
equivalence relationship 
on 
$\met{Z}$. 
We represent 
$\yoslamet{Z}=\met{Z}/\!\!\sim$.
For a member 
$d\in\met{Z}$, 
we write 
$\yocclq{d}$ 
as
the equivalence  class of 
$d$. 
Remark that 
each 
$\yoccl \in \yoslamet{Z}$
is 
a
(path-)connected component of 
$\met{Z}$, and 
it 
is also a clopen subset of 
$\met{Z}$. 

In the proof of main theorem, 
first, 
we fix 
a metric $m\in \met{A}$ and
consider 
the 
equivalence class 
$\yoccl=\yocclq{m}\in \yoslamet{A}$. 
Second,   we 
construct
an isometric map 
$\yomainmap\colon \yoccl\to \met{X}$, 
and 
gluing then together,   
we obtain 
$\yomainmap\colon \met{A}\to \met{X}$.


\subsection{Spaces of maps with finite supports}\label{subsec:finsp}
For
a set 
$T$, 
we denote by 
$\yofmosp{T}$
the set of 
all maps 
$f$ 
from 
$T$ 
into 
$\rr$
such that 
$\{\, s\in T\mid f(s)\neq 0\, \}$
is finite. 
Let 
$\yofmodisn$ 
stand for  the supremum metric on 
$\yofmosp{T}$. 
Namely, 
we have 
$\yofmodisn(f, g)=\max\{\, |f(s)-g(s)|\mid s\in T\, \}$.

\subsection{Constructions of metric spaces}\label{subsec:constmet}
In this section, 
we shall
 introduce 
 three constructions of metric spaces, 
the 
$\ell^{1}$-product 
with a fixed metric space, 
the 
$1$-Wasserstein space, 
and 
the space of measurable functions.

\subsubsection{$\ell^{1}$-product of  spaces}\label{subsec:ell1}
Fix a metric space 
$(S, u)$. 
For a metrizable space 
$Z$, 
we define
$\yoosmisasp{Z, S}
=Z\times S$. 
When  
$S$ 
is clear by the context, 
we simply write 
$\yoosmisasp{Z}=\yoosmisasp{Z, S}$. 
For a metric 
$d\in \met{Z}$, 
we define 
$\yoosmisadis{d}=d\times_{\ell^{1}} u$, 
i.e., 
$\yoosmisadis{d}((x, s), (y, t))=
d(x, y)+u(s, t)$. 
Fix a point 
$o\in S$, 
we define 
$\yoosmisaemb{Z}\colon Z\to \yoosmisasp{Z}$
by 
$\yoosmisaemb{Z}(x)=(x, o)$. 

\begin{prop}\label{prop:ellisom}
Let  
$(S, u)$ 
be a fixed metric space, 
 $o\in S$ be a fixed point, 
and 
$Z$ 
be  
a metrizable  space. 
We also fix 
$\yoccl\in \yoslamet{Z}$. 
Then 
the following statements 
are true:
\begin{enumerate}[label=\textup{(\arabic*)}]
\item\label{item:osaext}
For every 
$d\in \yoccl$, 
and 
for every pair 
$x, y\in Z$, 
we have 
\[
\yoosmisadis{d}\left(\yoosmisaemb{Z}(x), \yoosmisaemb{Z}(y)\right)=
d(x, y). 
\]
\item\label{item:osaisom}
The map 
$\yoosmisa\colon \met{Z}\to \met{\yoosmisasp{Z}}$
by 
$d\mapsto \yoosmisadis{d}$
is an isometric embedding, 
i.e., 
for every pair 
$d, e\in \met{Z}$, 
 we have 
\[
\metdis_{Z}(d, e)
=\metdis_{\yoosmisasp{Z}}(\yoosmisadis{d}, \yoosmisadis{e}). 
\]

\item\label{item:osmisatop}
For every pair 
$d, e\in \yoccl$, 
the metrics
$\yoosmisadis{d}$ 
and 
$\yoosmisadis{e}$
generate the 
same topology of 
$\yoosmisasp{Z}$.

\end{enumerate}
\end{prop}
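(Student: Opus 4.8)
The plan is to read off all three assertions directly from the definition of the $\ell^{1}$-sum $\yoosmisadis{d} = d \times_{\ell^{1}} u$; each reduces to an elementary computation, the only genuine input being the standard fact that an $\ell^{1}$-sum of topology-generating metrics again generates the product topology.

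First I would dispose of \ref{item:osaext}. Since $\yoosmisaemb{Z}(x) = (x, o)$ and $\yoosmisaemb{Z}(y) = (y, o)$, the definition of $\yoosmisadis{d}$ yields
\[
\yoosmisadis{d}\left(\yoosmisaemb{Z}(x), \yoosmisaemb{Z}(y)\right)
= d(x, y) + u(o, o) = d(x, y),
\]
because $u(o, o) = 0$; this holds for every $d \in \met{Z}$, in particular for $d \in \yoccl$.

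For \ref{item:osaisom}, I would first record that $\yoosmisadis{d}$ is a metric on $Z \times S$ generating the product topology (justified in the final step), so that $d \mapsto \yoosmisadis{d}$ indeed takes values in $\met{\yoosmisasp{Z, S}}$. The isometry is then a one-line cancellation: for $d, e \in \met{Z}$ and any $(x, s), (y, t) \in Z \times S$, the common $u(s, t)$ terms cancel, giving
\[
\left|\yoosmisadis{d}((x, s), (y, t)) - \yoosmisadis{e}((x, s), (y, t))\right|
= |d(x, y) - e(x, y)|.
\]
Since the right-hand side is independent of $s$ and $t$, taking the supremum over $(x, s), (y, t) \in Z \times S$ on the left and over $x, y \in Z$ on the right produces the same value, whence $\metdis_{\yoosmisa(Z)}(\yoosmisadis{d}, \yoosmisadis{e}) = \metdis_Z(d, e)$, with both sides finite or both infinite.

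Finally, for \ref{item:osmisatop} I would use that $\yoosmisadis{d}$ is bi-Lipschitz equivalent to the max-metric on $Z \times S$ via
\[
\max\{d(x, y), u(s, t)\}
\le \yoosmisadis{d}((x, s), (y, t))
\le 2 \max\{d(x, y), u(s, t)\},
\]
so that $\yoosmisadis{d}$ generates the product topology whenever $d$ and $u$ generate the topologies of $Z$ and $S$. As every member of $\met{Z}$ generates the topology of $Z$, in particular both $d, e \in \yoccl$ do, the metrics $\yoosmisadis{d}$ and $\yoosmisadis{e}$ both generate the product topology of $\yoosmisasp{Z}$ and therefore coincide topologically. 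I do not expect any serious obstacle here: the whole proposition is a direct unwinding of the $\ell^{1}$-sum, and the single point requiring (routine) justification is that the $\ell^{1}$-sum reproduces the product topology.
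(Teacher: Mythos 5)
Your proposal is correct and follows essentially the same route as the paper: statement \ref{item:osaext} by unwinding the definition with $u(o,o)=0$, statement \ref{item:osaisom} by the cancellation of the common $u(s,t)$ term before taking suprema, and statement \ref{item:osmisatop} via the (standard) fact that the $\ell^{1}$-sum generates the product topology, which you merely make explicit through the bi-Lipschitz comparison with the max-metric where the paper records it without detail.
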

%
\begin{proof}
Statement 
\ref{item:osaext}
 follows from 
 the definitions of 
 $\yoosmisadis{d}$
 and 
 $\yoosmisaemb{Z}(x)$. 
 Now we show 
 \ref{item:osaisom}. 
For every pair 
$(x, s), (y, t)\in \yoosmisa(Z)$, 
we have 
\begin{align*}
&|\yoosmisadis{d}((x, s), (y, t))
-
\yoosmisadis{e}((x, s), (y, t))|
\\
&=
|d(x, y)+u(s, t)
-e(x, y)-u(s, t)|
=|d(x, y)-e(x, y)|. 
\end{align*}
Thus we have 
$\metdis_{Z}(d, e)
=\metdis_{\yoosmisasp{Z}}(\yoosmisadis{d}, \yoosmisadis{e})$. 

Statement \ref{item:osmisatop}
is trivial. 
Moreover, 
for every metric $d\in \met{Z}$, 
the metric 
$\yoosmisadis{d}$
generates the product topology of 
$\yoosmisasp{Z}=Z\times S$. 
This statement is 
just a preparation for introducing 
osmotic constructions. 
This completes the proof. 
\end{proof}

\subsubsection{$1$-Wasserstein spaces}
Since we 
will consider the 
Wasserstein space
on a metrizable space that is not assumed to be 
separable, 
we follow the 
construction 
of Wasserstein spaces 
discussed in 
\cite{MR2861765}
and 
\cite{MR3858004}
using Radon measures. 

Let
 $Z$
  be 
 a metrizable space. 
 A Borel measure
 $\mu$
  on
  $Z$
 is said to be 
 \yoemph{Radon}
  if 
 for every Borel subset 
 $A$ 
 of 
 $Z$, 
 and for every 
 $\epsilon \in (0, \infty)$
there exists a compact subset 
$K$ 
of 
$Z$ such that 
$K\yosub A$
and 
$\mu(A\setminus K)<\epsilon$. 
 We denote by 
 $\yowasspw{Z}$
 the set of all 
 Radon 
 probability 
 measures on 
 $Z$. 
 We define 
 $\yowasemb{Z}\colon Z\to \yowasspw{Z}$
 by 
 $\yowasemb{Z}(x)=\yodmas{x}$, 
 where 
 $\yodmas{x}$
 is the Dirac measure on 
 $x$. 
 
 For 
 $\alpha, \beta\in \yowasspw{Z}$, 
 we 
 denote by 
 $\Pi(\alpha, \beta)$
 the set of 
 all 
 $\pi\in \yowasspw{Z\times Z}$
 such that 
 $\pi(A\times Z)=\alpha(A)$
 and $\pi(Z\times A)=\beta(A)$ for 
 all Borel subsets $A$
 of 
 $Z$. 
Now  we define 
the $1$-Wasserstein distance 
$\yowasdis{d}$
with respect to 
$d$
by 
\[
\yowasdis{d}(\alpha, \beta)
=\inf_{\pi\in \Pi(\alpha, \beta)}\int_{Z\times Z}d(s, t)\ d\pi(s, t), 
\]
where 
$\alpha, \beta\in \yowasspw{Z}$.
It should be noted that, 
in general, 
$\yowasdis{d}$ 
can take 
the value 
$\infty$. 
Thus, 
we focus on the subset of 
$\yowasspw{Z}$ 
where 
$\yowasdis{d}$ 
takes
only finite values. 
Fix an equivalence class 
 $\yoccl \in \yoslamet{Z}$. 
We denote by 
$\yowasspc{Z}{\yoccl}$
the set of 
$\alpha\in \yowasspw{Z}$
such that 
$\yowasdis{d}(\alpha, \yodmas{p})<\infty$
for some 
$p\in Z$. 
This condition is also
equivalent to the inequality 
$\yowasdis{d}(\alpha, \yodmas{q})<\infty$
for every 
$q\in Z$. 
By the definition, 
the set 
$\yowasspc{Z}{\yoccl}$
does not depend on 
the choice of a representative of 
the class 
$\yoccl$; 
namely, 
for every pair 
$d, e\in \yoccl$, 
and for every 
$\alpha \in \yowasspw{Z}$, 
we have 
$\yowasdis{d}(\alpha, \yodmas{p})<\infty$
if and only if 
$\yowasdis{e}(\alpha, \yodmas{p})<\infty$. 
When 
we  fix a class 
$\yoccl$, 
we simply write
$\yowassp{Z}=\yowasspc{Z}{\yoccl}$. 
We call 
$(\yowassp{Z}, \yowasdis{d})$
the 
\yoemph{$1$-Wasserstein space on
$(Z, d)$}. 

The 
$1$-Wasserstein 
space is  sometimes 
called 
the 
\yoemph{Lipschitz-free space} with 
emphasizing the aspect of the dual space of 
the space of Lipschitz functions through 
the 
 Kantorovich--Rubinstein duality. 
This space is also sometimes referred 
to 
as
the 
\yoemph{Arens--Eells space}, 
named  after
the 
Arens--Eells
embedding theorem  
\cite{MR0081458}.

From now on, 
we review properties of 
the 
$1$-Wasserstein spaces which 
we will use in the present paper.

For a metric sapce 
$(X, d)$, 
we denote by 
$\yolipsp{X, d}$
the set of 
all 
real-valued 
$1$-Lipschitz functions
on $(X, d)$. 
To compute 
 values of Wasserstein distance 
between  specific measures, 
we shall make use of 
the Kantorovich--Rubinstein duality. 
\begin{thm}[The Kantorovich--Rubinstein duality]\label{thm:krdual}
Let 
$(Z, d)$ 
be a metric space. 
Then for every pair 
$\alpha, \beta\in \yowassp{Z}$, 
we have 
\[
\yowasdis{d}(\alpha, \beta)
=
\sup\left\{\, \int_{Z}f(x)\ d(\alpha-\beta)(x) \  \middle|\  f\in \yolipsp{X, d}\, \right\}. 
\]
\end{thm}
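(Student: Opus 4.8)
The plan is to prove the two inequalities of the asserted identity separately: the bound $\ge$ by a direct estimate valid on all of $Z$, and the reverse bound $\le$ by reducing to the classical separable case, where the Kantorovich--Rubinstein theorem is already available in the references cited above.

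For the inequality $\yowasdis{d}(\alpha, \beta)\ge \sup\{\int_{Z}f\ d(\alpha-\beta)\mid f\in \yolipsp{Z, d}\}$ I would argue directly. Fix $f\in \yolipsp{Z, d}$ and $\pi\in \Pi(\alpha, \beta)$. Since $\pi$ has marginals $\alpha$ and $\beta$, we have
\[
\int_{Z}f\ d(\alpha-\beta)=\int_{Z\times Z}\bigl(f(x)-f(y)\bigr)\ d\pi(x, y)\le \int_{Z\times Z}d(x, y)\ d\pi(x, y),
\]
where the inequality uses $|f(x)-f(y)|\le d(x, y)$. Taking the infimum over $\pi$ and the supremum over $f$ gives the claim; here $\yowasdis{d}(\alpha, \beta)<\infty$ because $\alpha$ and $\beta$ lie in the same class, so no infinite quantities intervene.

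For the reverse inequality the obstacle is that $Z$ is not assumed separable, so the usual statements of the duality (phrased for Polish spaces) do not apply verbatim. I would remove this difficulty using the Radon hypothesis. Each of $\alpha$ and $\beta$ is concentrated on a $\sigma$-compact Borel set, so there is a common $\sigma$-compact, hence separable, set $C\yosub Z$ carrying both measures. Any $\pi\in \Pi(\alpha, \beta)$ then satisfies $\pi\bigl((Z\setminus C)\times Z\bigr)=\alpha(Z\setminus C)=0$ together with the symmetric identity, so $\pi$ is concentrated on $C\times C$; conversely, the couplings of $\alpha$ and $\beta$ on the separable subspace $Y:=\overline{C}$ are exactly the couplings on $Z$. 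Consequently $\yowasdis{d}(\alpha, \beta)$ is unchanged if computed in the separable space $(Y, d)$, and there (after passing, if necessary, to the completion of $Y$, which is Polish and still carries $\alpha, \beta$ as Radon measures) the classical Kantorovich--Rubinstein duality yields $\yowasdis{d}(\alpha, \beta)=\sup\{\int_{Y}g\ d(\alpha-\beta)\mid g\in \yolipsp{Y, d}\}$.

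It remains to match the two supremum quantities. Restriction sends $\yolipsp{Z, d}$ into $\yolipsp{Y, d}$ and leaves $\int f\ d(\alpha-\beta)$ unchanged, since $\alpha$ and $\beta$ vanish off $Y$; conversely, the McShane--Whitney formula $g\mapsto \inf_{y\in Y}\bigl(g(y)+d(\cdot, y)\bigr)$ extends any $g\in \yolipsp{Y, d}$ to a member of $\yolipsp{Z, d}$ without altering the relevant integral. Hence the supremum over $\yolipsp{Z, d}$ equals the supremum over $\yolipsp{Y, d}$, and combining this with the previous paragraph finishes the proof. I expect the genuinely delicate steps to be the concentration of Radon measures on separable ($\sigma$-compact) carriers and the transfer of the duality to the separable space; once those are in place, the Lipschitz matching and the easy inequality are routine bookkeeping.
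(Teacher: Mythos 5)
Your proposal is correct and takes essentially the same approach as the paper: the paper's proof is a citation to \cite[Theorem 4.1]{MR2861765} and \cite[Theorem 1]{MR0867434} together with a sketch of exactly your reduction, namely the Polish-space duality combined with the concentration of Radon measures on $\sigma$-compact (hence separable) carriers and the McShane--Whitney extension to identify the suprema over $\yolipsp{Z, d}$ and over the Lipschitz functions on the carrier. You have simply carried out in detail the argument the paper outlines, including the routine easy inequality via couplings.
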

\begin{proof}
Remark that we does not assume that 
$X$ 
is Polish in the statement
of Theorem \ref{thm:krdual}. 
For the proof, 
we refer the readers to 
\cite[Theorem 4.1]{MR2861765}
and 
\cite[Theorem 1]{MR0867434}, 
or 
we can prove this generalized duality using 
the duality for Polish spaces 
together with 
\cite[Lemma 2.5]{MR3858004}, 
the McShane--Whitney extension of Lipschitz functions, 
and the definition of Radon 
probability measures. 
\end{proof}

To show the main result, 
we need the following 
formulae of 
Wasserstein distances.

\begin{lem}\label{lem:wasfinfin}
Let 
$Z$ 
be a metrizable space, 
and 
fix 
$\yoccl\in \yoslamet{Z}$
and 
$n\in \zz_{\ge 0}$. 
Take 
 two 
sequences 
$s_{1}, \dots ,s_{n}$
and 
$t_{1}, \dots, t_{n}$
of non-negative  reals 
 such that 
$\sum_{i=1}^{n}s_{i}
=\sum_{i=1}^{n}t_{i}=1$, 
and take 
a finite sequence
 $x_{1}, \dots, x_{n}$ 
 in 
$X$. 
Then  
for every  point 
$b\in Z$, 
we have 
\[
\yowasdis{d}\left(
\sum_{i=1}^{n}s_{i}\yodmas{x_{i}}, 
\sum_{i=1}^{n}t_{i}\yodmas{x_{i}}, 
\right)
\le  \sum_{i=1}^{n}|s_{i}-t_{i}|d(x_{i}, b). 
\]
\end{lem}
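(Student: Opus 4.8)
The plan is to apply the Kantorovich--Rubinstein duality (Theorem \ref{thm:krdual}) rather than building an explicit transport plan, since the duality reduces the estimate to a one-line application of the Lipschitz condition. Write $\alpha=\sum_{i=1}^{n}s_{i}\yodmas{x_{i}}$ and $\beta=\sum_{i=1}^{n}t_{i}\yodmas{x_{i}}$. Before invoking the duality I would first check that $\alpha,\beta\in\yowassp{Z}$, so that Theorem \ref{thm:krdual} legitimately applies: both are finitely supported, and the coupling $\sum_{i=1}^{n}s_{i}\yodmas{(x_{i},b)}$ witnesses $\yowasdis{d}(\alpha,\yodmas{b})\le\sum_{i=1}^{n}s_{i}d(x_{i},b)<\infty$, with the analogous bound for $\beta$; hence $\alpha,\beta$ lie in the class-dependent set $\yowassp{Z}=\yowasspc{Z}{\yoccl}$.

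The key observation is that $\sum_{i=1}^{n}(s_{i}-t_{i})=0$, which holds because both sequences sum to $1$. Consequently, for every $f\in\yolipsp{Z,d}$ we may recenter the test function at the point $b$ without changing the integral:
\[
\int_{Z}f\ d(\alpha-\beta)=\sum_{i=1}^{n}(s_{i}-t_{i})f(x_{i})=\sum_{i=1}^{n}(s_{i}-t_{i})\bigl(f(x_{i})-f(b)\bigr),
\]
where the subtraction of the constant $f(b)$ is justified precisely by the zero-sum condition. Applying the triangle inequality and then the $1$-Lipschitz estimate $|f(x_{i})-f(b)|\le d(x_{i},b)$ gives
\[
\left|\int_{Z}f\ d(\alpha-\beta)\right|\le\sum_{i=1}^{n}|s_{i}-t_{i}|\,|f(x_{i})-f(b)|\le\sum_{i=1}^{n}|s_{i}-t_{i}|\,d(x_{i},b).
\]

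Taking the supremum over all $f\in\yolipsp{Z,d}$ and invoking the Kantorovich--Rubinstein duality then yields exactly the claimed bound on $\yowasdis{d}(\alpha,\beta)$. I do not expect a genuine obstacle here: the proof is essentially immediate once the duality is available, and the only two points demanding a moment of care are the membership of $\alpha$ and $\beta$ in $\yowassp{Z}$ (needed so that Theorem \ref{thm:krdual} is applicable to a possibly non-separable $Z$) and the recentering trick at $b$, which is what makes the distances $d(x_{i},b)$ rather than mutual distances $d(x_{i},x_{j})$ appear on the right-hand side.
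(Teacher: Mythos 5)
Your proposal is correct and follows essentially the same route as the paper's proof: the paper likewise applies the Kantorovich--Rubinstein duality and performs exactly your recentering trick, writing $\sum_{i}s_{i}f(x_{i})-\sum_{i}t_{i}f(x_{i})=\sum_{i}(s_{i}-t_{i})(f(x_{i})-f(b))$ via the condition $\sum_{i}s_{i}=\sum_{i}t_{i}=1$ and then bounding by the $1$-Lipschitz estimate. Your additional verification that $\alpha,\beta\in\yowassp{Z}$ before invoking the duality is a small point of care the paper leaves implicit, but it does not change the argument.
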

%
\begin{proof}
Put 
$\alpha=\sum_{i=1}^{n}s_{i}\yodmas{x_{i}}$,
and  
$\beta=\sum_{i=1}^{n}t_{i}\yodmas{x_{i}}$. 
Then 
for every 
$1$-Lipschitz map  
$f\colon (Z, d)\to (\rr, |*|)$, 
we have 
\begin{align*}
&\int_{Z}f(x)\ d(\alpha-\beta)
=
\sum_{i=1}^{n}s_{i}f(x_{i})-
\sum_{i=1}^{n}t_{i}f(x_{i})\\
&=
\sum_{i=1}^{n}s_{i}f(x_{i})-
f(b)+f(b)-
\sum_{i=1}^{n}
t_{i}f(x_{i})\\
&=
\sum_{i=1}^{n}s_{i}f(x_{i})
-
\sum_{i=1}^{n}s_{i}f(b)
+
\sum_{i=1}^{n}t_{i}f(b)-
\sum_{i=1}^{n}t_{i}f(x_{i})
\\
&=
\sum_{i=1}^{n}s_{i}(f(x_{i})-f(b))
+
\sum_{i=1}^{n}t_{i}(f(b)-f(x_{i}))
\\
&=
\sum_{i=1}^{n}(s_{i}-t_{i})(f(x_{i})-f(b)).
\end{align*}
Here, 
for each 
$i\in \zz_{\ge 0}$,
we take 
 a 
number 
$\mathbf{sgn}_{i}\in \{1, -1\}$
such that 
$\mathbf{sgn}_{i}\cdot |s_{i}-t_{i}|=(s_{i}-t_{i})$. 
Then we can continue to compute. 
\begin{align*}
&\sum_{i=1}^{n}(s_{i}-t_{i})(f(x_{i})-f(b))
=
\sum_{i=1}^{n}
|s_{i}-t_{i}|\cdot \mathbf{sgn}_{i}\cdot (f(x_{i})-f(b))
\\
&\le 
\sum_{i=1}^{n}
|s_{i}-t_{i}|\cdot |f(x_{i})-f(b)|
\le \sum_{i=1}^{n}
|s_{i}-t_{i}|\cdot d(x_{i}, b).  
\end{align*}
Thus, 
for every $f\in \yolipsp{Z, d}$, we have 
\[
\int_{Z}f(x)\ d(\alpha-\beta)
\le
\sum_{i=1}^{n} |s_{i}-t_{i}|\cdot d(x_{i}, b). 
\]
Therefore, 
the Kantorovich--Rubinstein 
duality
(Theorem \ref{thm:krdual})
implies the lemma. 
\end{proof}

\begin{lem}\label{lem:wasfin1}
Let 
$Z$ 
be a
metrizable space, 
and fix 
$\yoccl\in \yoslamet{Z}$. 
Take a finite 
sequence  
$c_{1}, \dots , c_{n}$
of 
non-negative  reals 
 such that 
$\sum_{i=1}^{n}c_{i}=1$, 
and take 
a
 finite sequence 
 $x_{1}, \dots, x_{n}$ 
 in 
$X$
Then 
for every 
point 
$p\in X$, 
we have 
\[
\yowasdis{d}\left(\sum_{i=1}^{n}c_{i}\yodmas{x_{i}}, \yodmas{p}\right)
= \sum_{i=1}^{n}c_{i}d(x_{i}, p). 
\]
\end{lem}
%
\begin{proof}
Put 
$\alpha=\sum_{i=1}^{n}c_{i}\yodmas{x_{i}}$,
and
$\beta=\yodmas{p}$. 
Define
 a probability measure 
$\mu\in \Pi(\alpha, \beta)$
on $Z\times Z$ by 
$\mu=\sum_{i=1}^{n}c_{i}\yodmas{(x_{i}, p)}$. 
Under this situation, 
we have 
\[
\yowasdis{d}(\alpha, \beta)\le 
\int_{Z\times Z}d(x, y)\ d\mu(x, y)
=\sum_{i=1}^{n}c_{i}d(x_{i}, p). 
\]
To obtain the opposite inequality, 
define 
$f\in  \yolipsp{X, d}$
by 
$f(x)=d(x, p)$. 
Then 
we have 
\begin{align*}
&\int_{Z}f\ d(\alpha-\beta)
=
\left(\sum_{i=1}^{n}c_{i}f(x_{i})\right)
-f(p)
=
\left(\sum_{i=1}^{n}c_{i}d(x_{i}, p)\right)
-d(p, p)
\\
&=
\sum_{i=1}^{n}c_{i}d(x_{i}, p). 
\end{align*}
Thus 
the Kantorovich--Rubinstein 
duality
(Theorem \ref{thm:krdual})
 implies  that 
$\sum_{i=1}^{n}c_{i}d(x_{i}, p)\le \yowasdis{d}(\alpha, \beta)$. 
This finishes the proof. 
\end{proof}

\begin{cor}\label{cor:oswasext}
Let 
$Z$ 
be a metrizable space, 
and fix a class 
$\yoccl\in \yoslamet{Z}$.
Then for every 
$d\in \yoccl$, 
and for every pair 
$x, y\in Z$, 
we have 
\[
\yowasdis{d}\left(\yowasemb{Z}(x), \yowasemb{Z}(y)\right)
=d(x, y). 
\]
\end{cor}
\begin{proof}
This lemma is a
special case of 
Lemma \ref{lem:wasfin1}. 
Recall that 
$\yowasemb{Z}(x)=\yodmas{x}$. 
\end{proof}

From now on, 
we consider the topology of 
$1$-Wasserstein
 spaces of Radon measures 
on metric spaces that are not necessarily 
complete.

We first introduce the 
known 
description of 
$1$-Wasserstein spaces on 
complete spaces. 
For a metric space 
$(Z, d)$, 
we denote by 
$\yodbddsp{Z}{d}$
the set of all 
$f\colon Z\to \rr$ 
such that 
there exist 
$A, B\in (0, \infty)$ 
and 
$p\in Z$
for which 
$|f(x)|\le A\cdot d(x, p)+B$
 for all 
 $x\in Z$. 
 We do not impose the continuity of 
 each 
 $f\in \yodbddsp{Z}{d}$. 
 The symbol  
 ``$\yodbddsym$''
 means 
 ``Metric  + Constant''. 
 Note that 
 $f\in \yodbddsp{Z}{d}$
  if and only if 
 for every 
 $q\in Z$,  
 there exist
 $\widetilde{A}, \widetilde{B}\in (0, \infty)$ 
 such that 
 $|f(x)|\le \widetilde{A}\cdot d(x, q)+\widetilde{B}$
 for all 
 $x\in Z$.

The following theorem describes the 
topology of 
$1$-Wasserstein space on 
a complete metric space. 
As Theorem 
\ref{thm:wastop20240918}, 
we will 
remove the assumption that an underlying space is complete from 
Theorem \ref{thm:wastop}. 
\begin{thm}\label{thm:wastop}
Let 
$(R, w)$ 
be a complete metric space, 
and let 
$\{\mu_{i}\}_{i\in \zz_{\ge 0}}$
be a sequence 
in 
$\yowassp{R}$
and 
take 
$\mu\in \yowassp{R}$. 
Then the following statements are equivalent to 
each other. 
\begin{enumerate}[label=\textup{(\arabic*)}]
\item\label{item:wconv}
The sequence 
$\{\mu_{i}\}_{i\in \zz_{\ge 0}}$
converges to
 $\mu$
 in 
 $(\yowassp{R}, \yowasdis{w})$.


\item\label{item:bddconv}
If a continuous function 
$f\colon R\to \rr$
belongs to 
$\yodbddsp{R}{w}$, 
then we have 
\[
\int_{R}f(x)\ d\mu_{i}
\to \int_{R}f(x)\ d\mu
\]
 as 
$i\to \infty$. 
\end{enumerate}
\end{thm}
%
\begin{proof}
See 
\cite[Theorem 2.11 and Theorem 2.12]{MR3858004}. 
\end{proof}

In the next proposition, 
it is shown that 
every 
Radon probability measure on 
$Z$
has a $\sigma$-compact support, 
and hence is 
can be extended to a 
measure on an extension space 
of $Z$. 
This observation will  build a bridge between 
the topologies of 
$1$-Wasserstein spaces  on 
complete and incomplete spaces. 
\begin{prop}\label{prop:measureksigma}
Let 
$(Z, d)$
be a metric space, and 
$\mu\in \yowasspw{Z}$. 
Then there exists 
a subset 
$S$ 
of
$Z$ 
such that 
\begin{enumerate}[label=\textup{(m\arabic*)}]
\item 
the set 
$S$ 
is 
$\sigma$-compact. 
In particular, 
the set 
$S$ 
is 
absolutely 
 $F_{\sigma}$;
\item 
we have 
$\mu(S)=1$. 
\end{enumerate}
\end{prop}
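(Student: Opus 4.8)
The plan is to exploit the definition of a Radon measure given in the excerpt, which asserts that for every Borel set $A$ and every $\epsilon>0$ there is a compact $K$ with $\mu(A\setminus K)<\epsilon$. First I would apply this to the full space $A=Z$ (which is Borel in itself) with the sequence $\epsilon=1/n$ for $n\in\zz_{\ge 0}$, obtaining compact sets $K_{n}\yosub Z$ such that $\mu(Z\setminus K_{n})<1/n$, equivalently $\mu(K_{n})>1-1/n$. I then set $S=\bigcup_{n\ge 1}K_{n}$. By construction $S$ is a countable union of compact sets, hence $\sigma$-compact, which gives condition (m1). The remark that $S$ is absolutely $F_{\sigma}$ follows because each compact $K_{n}$ is closed in $Z$, so $S$ is an $F_{\sigma}$ subset of $Z$; and a $\sigma$-compact set embedded in any metrizable space remains $\sigma$-compact there (compactness is intrinsic), so it is $F_{\sigma}$ in every metrizable extension.

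For condition (m2), I would argue that $\mu(S)=1$. Since $S\supseteq K_{n}$ for every $n$ and $\mu$ is a probability measure, monotonicity gives $\mu(S)\ge \mu(K_{n})>1-1/n$ for all $n$. Letting $n\to\infty$ forces $\mu(S)\ge 1$, and since $\mu(S)\le \mu(Z)=1$ we conclude $\mu(S)=1$. This part is essentially immediate once the compact sets are in hand. A minor point to address is measurability: $S$ is a countable union of compact (hence Borel) sets, so $S$ is Borel and $\mu(S)$ is well-defined.

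I do not expect any serious obstacle in this argument, since the Radon condition is tailor-made for inner approximation by compact sets. The only conceptual care needed is the justification of the phrase \emph{absolutely $F_{\sigma}$}, i.e.\ that being $F_{\sigma}$ persists under embeddings into arbitrary metrizable extension spaces; here the key observation is that $\sigma$-compactness is preserved by topological embeddings and that a $\sigma$-compact subset of a metrizable space is automatically $F_{\sigma}$ there, because each compact piece is closed. Thus the genuinely substantive content is the extraction of the increasing family of compact sets from the definition of Radon measure, and everything else reduces to routine set-theoretic and measure-theoretic bookkeeping.
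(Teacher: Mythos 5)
Your proof is correct and is precisely the routine argument the paper leaves implicit: the paper's proof is the one-line remark that the proposition ``follows from the definition of Radon probability measures,'' and your extraction of compact sets $K_{n}$ with $\mu(K_{n})>1-1/n$ and $S=\bigcup_{n\ge 1}K_{n}$ is exactly that intended argument, including the correct observation that compact pieces are closed in any metrizable extension, so $S$ is absolutely $F_{\sigma}$. The only cosmetic slip is indexing $\epsilon=1/n$ over $n\in\zz_{\ge 0}$ (undefined at $n=0$) before correctly taking the union over $n\ge 1$.
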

%
\begin{proof}
For each 
$n\in \zz_{\ge 0}$, 
due to the definition of 
Radon
probability  measures, 
we obtain 
a compact subset 
$K_{n}$ of 
$Z$ such that 
$\mu(K_{n})>1-2^{-n}$. 
Put 
$S=\bigcup_{n\in \zz_{\ge 0}}K_{n}$. 
Then 
$S$ 
satisfies the 
conclusion of the proposition. 
\end{proof}

\begin{df}\label{df:extmeasure}
Take a subset 
$S$ 
mentioned in 
 Proposition 
\ref{prop:measureksigma}, 
and 
take  a metrizable space 
$\Omega$ 
with 
$Z\yosub \Omega$. 
Then 
$S$ 
is also 
Borel in 
$\Omega$
 since it is 
 $\sigma$-compact. 
Then,
 for every 
Borel set 
$A$ 
of 
$\Omega$, 
the intersection 
$A\cap S$ 
is Borel in 
$\Omega$. 
Thus it is also Borel in 
$S$. 
Since 
$S$ 
is Borel in 
$Z$, 
we can conclude that 
$A\cap S$
 is Borel in 
$Z$
(see also \cite[Proposition 3.1.9]{MR1619545}). 
Thus, 
 we can  define 
$\yomext{\mu}$ 
by 
$\yomext{\mu}(A)=\mu(A\cap S)$
for every Borel set 
$A$ 
of 
$\Omega$, 
and 
we see that
 $\yomext{\mu}$ 
also  becomes 
a
Radon Borel  measure on 
$\Omega$. 
Note that 
$\yomext{\mu}$
does not depend on 
the choice of 
$S$. 
Based on this phenomenon, 
whenever we are  given  a metrizable space 
$\Omega$
 such that 
 $Z\yosub \Omega$, 
we use the same symbol 
$\yomext{\mu}$
to denote 
  the extension 
 of 
 $\mu$ 
 constructed above. 
 \end{df}

\begin{prop}\label{prop:wawasamesame}
Let 
$(Z, d)$ 
be a metric space
and 
let 
$(\yocompsp{Z}{d}, \yocompdis{d})$
 denote  the completion of 
 $(Z, d)$. 
Then we have 
\[
\yowasdis{d}(\alpha, \beta)
=
\yowasdis{\yocompdis{d}}(\yomext{\alpha}, \yomext{\beta}). 
\]
\end{prop}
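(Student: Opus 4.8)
The plan is to reduce everything to the Kantorovich--Rubinstein duality (Theorem \ref{thm:krdual}), so that both Wasserstein distances become suprema of integrals of $1$-Lipschitz functions, and then to match these two suprema by passing Lipschitz functions back and forth between $Z$ and its completion. Before doing so, I would first record the two structural facts that make this possible. First, $Z$ sits isometrically and densely in $\yocompsp{Z}{d}$, with $\yocompdis{d}$ restricting to $d$ on $Z\times Z$. Second, by Proposition \ref{prop:measureksigma} each of $\alpha,\beta$ is concentrated on a $\sigma$-compact set $S\yosub Z$, so that the extensions $\yomext{\alpha},\yomext{\beta}$ are again concentrated on $Z$; consequently, for any function $h$ on $\yocompsp{Z}{d}$ one has $\int_{\yocompsp{Z}{d}} h\, d\yomext{\alpha}=\int_{Z}(h|_{Z})\, d\alpha$, and likewise for $\beta$. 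I would also check that $\yomext{\alpha},\yomext{\beta}\in\yowassp{\yocompsp{Z}{d}}$, which is needed to legitimately apply the duality on the completion: taking a basepoint $p\in Z$ and the admissible test function $x\mapsto \yocompdis{d}(x,p)$, the finiteness of $\yowasdis{\yocompdis{d}}(\yomext{\alpha},\yodmas{p})$ follows from $\yowasdis{d}(\alpha,\yodmas{p})<\infty$ together with the concentration on $Z$.

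For the inequality $\yowasdis{d}(\alpha,\beta)\le \yowasdis{\yocompdis{d}}(\yomext{\alpha},\yomext{\beta})$, I would take an arbitrary $f\in\yolipsp{Z,d}$ and extend it to $\bar f\in\yolipsp{\yocompsp{Z}{d},\yocompdis{d}}$; since $Z$ is dense in the completion, a $1$-Lipschitz function extends uniquely while preserving its Lipschitz constant. Using that $\yomext{\alpha},\yomext{\beta}$ are concentrated on $Z$ and that $\bar f|_{Z}=f$, one gets
\[
\int_{Z} f\, d(\alpha-\beta)=\int_{\yocompsp{Z}{d}}\bar f\, d(\yomext{\alpha}-\yomext{\beta})\le \yowasdis{\yocompdis{d}}(\yomext{\alpha},\yomext{\beta}),
\]
and taking the supremum over $f$ via Theorem \ref{thm:krdual} yields the claim. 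For the reverse inequality, I would run the symmetric argument: given $g\in\yolipsp{\yocompsp{Z}{d},\yocompdis{d}}$, its restriction $g|_{Z}$ lies in $\yolipsp{Z,d}$ because $\yocompdis{d}=d$ on $Z\times Z$, and the same concentration fact gives $\int_{\yocompsp{Z}{d}}g\, d(\yomext{\alpha}-\yomext{\beta})=\int_{Z}(g|_{Z})\, d(\alpha-\beta)\le \yowasdis{d}(\alpha,\beta)$; the supremum over $g$ then finishes the proof.

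The genuinely delicate points are not the two inequalities themselves, which are essentially formal once the duality is in place, but the measure-theoretic bookkeeping surrounding $\yomext{\alpha}$ and $\yomext{\beta}$: namely, that these extensions remain Radon probability measures concentrated on a Borel (indeed $\sigma$-compact) subset of $Z$, that the test functions are integrable so that all the displayed integrals are finite and the differences $\int f\,d(\alpha-\beta)$ are well-defined, and that $\yomext{\alpha},\yomext{\beta}$ indeed belong to the correct class $\yowassp{\yocompsp{Z}{d}}$ so that the duality theorem applies on the completion. All of these follow from Proposition \ref{prop:measureksigma} and the finiteness conditions defining $\yowassp{Z}$, but they should be verified carefully before invoking Theorem \ref{thm:krdual} on $\yocompsp{Z}{d}$.
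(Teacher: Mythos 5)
Your proposal is correct and follows essentially the same route as the paper's own proof: both inequalities are obtained from the Kantorovich--Rubinstein duality (Theorem \ref{thm:krdual}) by extending $1$-Lipschitz functions from $Z$ to the completion and restricting them back, using that $\yomext{\alpha}$ and $\yomext{\beta}$ give measure zero to $\yocompsp{Z}{d}\setminus Z$. Your explicit verification that $\yomext{\alpha},\yomext{\beta}\in\yowassp{\yocompsp{Z}{d}}$ before invoking the duality on the completion is a point the paper leaves implicit, but it does not change the argument.
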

%
\begin{proof}
By the Kantorovich--Rubinstein duality
(Theorem \ref{thm:krdual}), 
for each 
$\epsilon \in (0, \infty)$, 
there exists 
$f\in \yolipsp{X, d}$
such that 
\[
\yowasdis{d}(\alpha, \beta)-\epsilon<\int_{Z}f\ d(\alpha-\beta).
\] 
Since 
$f$ 
is Lipschitz
and 
$(\yocompsp{Z}{d}, \yocompdis{d})$ 
is the completion of 
$(Z, d)$, 
using 
Cauchy sequences 
(see also 
\cite[Theorem 2]{MR0390999} and \cite{MR0969516}), 
we can obtain 
$F\in \yolipsp{\yocompsp{Z}{d}, \yocompdis{d}}$
such that 
$F|_{Z}=f$. 
Since 
 $\yomext{\alpha}(\yocompsp{Z}{d}\setminus Z)=0$
 and 
 $\yomext{\beta}(\yocompsp{Z}{d}\setminus Z)=0$
 (see Definition  \ref{df:extmeasure}), 
using 
the Kantorovich--Rubinstein duality
(Theorem \ref{thm:krdual}) again,   
we have 
 \[
 \int_{Z}f\ d(\alpha-\beta)= 
 \int_{\yocompsp{Z}{d}}F\ d(\yomext{\alpha}-\yomext{\beta})
 \le \yowasdis{\yocompdis{d}}(\yomext{\alpha}, \yomext{\beta}). 
 \]
 Thus 
 $\yowasdis{d}(\alpha, \beta)\le 
 \yowasdis{\yocompdis{d}}(\yomext{\alpha}, \yomext{\beta})$. 
 Next, 
 we show the opposite inequality. 
 For each 
 $\epsilon \in (0, \infty)$, 
 take 
 $\phi\in \yolipsp{\yocompsp{Z}{d}, \yocompdis{d}}$ 
 such that 
 \[
 \yowasdis{\yocompdis{d}}(\yomext{\alpha}, \yomext{\beta})-\epsilon 
 \le \int_{\yocompsp{Z}{d}}\phi\ d(\yomext{\alpha}-\yomext{\beta}).
 \]
 Of course, we obtain 
 $\phi|_{X}\in \yolipsp{Z, d}$, 
 and 
\[
\int_{\yocompsp{Z}{d}}\phi\ d(\yomext{\alpha}-\yomext{\beta})=
 \int_{Z}\phi|_{X}\ d(\alpha-\beta)\le 
 \yowasdis{d}(\alpha, \beta).
 \] 
 Hence 
  $\yowasdis{d}(\alpha, \beta)\le 
 \yowasdis{\yocompdis{d}}(\yomext{\alpha}, \yomext{\beta})$. 
 Therefore, 
  we conclude that 
 $\yowasdis{d}(\alpha, \beta)=
 \yowasdis{\yocompdis{d}}(\yomext{\alpha}, \yomext{\beta})$. 
\end{proof}

\begin{rmk}
Proposition 
\ref{prop:wawasamesame}
indicates that 
the extension 
$\yomext{\mu}$
of 
$\mu$
can be obtained as follows:
For a metrizable space
 $X$, 
let
 $\yold{X}$ 
 be the set of all 
finitely convex combination of 
the  Dirac measures on
 $X$. 
We regard 
 $\yold{X}$  as a subspace of 
$\yowassp{X}$. 
Note that, in general, 
the subset 
 $\yold{X}$ 
 is dense in 
$(\yowassp{X}, \yowasdis{d})$
(This
statement 
for a complete space
 $X$
  follows from 
\cite[Theorem 2.7]{MR3858004}.
Using Lemma 
\ref{lem:wasfinfin}, 
we can also show 
the case of an incomplete space 
$X$). 
Since 
$Z$
 is dense in 
$\yocompsp{Z}{d}$, 
we obtain an isometric  embedding 
$\yokarimap\colon \yold{Z}\to \yold{\yocompsp{Z}{d}}$
such that 
$\yokarimap(\yodmas{z})=\yodmas{z}$
for all
 $z\in Z$, 
where 
$\yodmas{z}$
in the right hand side 
is a measure on 
$\yocompsp{Z}{d}$. 
In this setting, 
the image set 
$\yokarimap(\yold{Z})$
 is dense in 
$\yold{\yocompsp{Z}{d}}$. 
Thus we obtain an isometric embedding 
$\yokarimapq\colon \yowassp{Z}\to \yowassp{\yocompsp{Z}{d}}$
such that 
$\yokarimapq|_{\yold{Z}}=\yokarimap$. 
Then we can observe 
that 
$\yokarimapq(\mu)=\yomext{\mu}$. 
\end{rmk}

\begin{prop}\label{item:pointapproxlip}
Let 
$(Z, d)$
be a metric space, 
fix
$\yoccl\in \yoslamet{Z}$, 
take  
$e\in \yoccl$, 
and let
$\mu$
be a 
Radon 
probability measure on 
$Z$
 such that 
$\int_{Z}d(x, p)\ d\mu<\infty$. 
Let 
$(\yocompsp{Z}{d}, \yocompdis{d})$
denote the 
completion of 
$(Z, d)$. 
If a continuous function 
$f\colon Z\to \rr$
belongs to 
$\yodbddsp{Z}{d}$, 
then, 
for every 
$\epsilon\in (0, \infty)$, 
there exist Lipschitz functions 
$M, W\colon \yocompsp{Z}{d}\to \rr$ 
on 
$(\yocompsp{Z}{d}, \yocompdis{d})$
such that 
\begin{enumerate}[label=\textup{(\arabic*)}]
\item 
the functions 
$M$ 
and 
$W$
belong to 
$\yodbddsp{\yocompsp{Z}{d}}{\yocompdis{d}}$; 
\item
for every 
$x\in Z$, 
we have 
\[
M(x)\le f(x)\le W(x);
\]
\item 
we have 
\[
\int_{Z}W(x)-f(x)\ d\mu(x) <\epsilon, 
\]
and 
\[
\int_{Z}f(x)-M(x)\ d\mu(x) <\epsilon. 
\]
\end{enumerate}
\end{prop}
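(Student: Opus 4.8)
The plan is to approximate $f$ from above and below by the Moreau--Yosida regularizations (sup- and inf-convolutions) with respect to the completion metric $\yocompdis{d}$, and then to read off the estimate from the dominated convergence theorem. First I would transfer the growth bound from $e$ to $d$. Since $d$ and $e$ lie in the same class $\yoccl$, we have $C:=\metdis_{Z}(d, e)<\infty$, whence $e(x, p)\le d(x, p)+C$ for all $x\in Z$; therefore the hypothesis $f\in \yodbddsp{Z}{e}$ furnishes constants $A, B'\in (0, \infty)$ and a point $p\in Z$ (the $\yodbddsym$ condition being independent of the base point) with
\[
|f(x)|\le A\cdot d(x, p)+B'\qquad (x\in Z).
\]
Combined with $\int_{Z}d(x, p)\ d\mu<\infty$ and $\mu(Z)=1$, this also shows $f\in L^{1}(\mu)$.

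Next, for each integer $n> A$ I would define $W_{n}, M_{n}\colon \yocompsp{Z}{d}\to \rr$ on the completion by
\[
W_{n}(\xi)=\sup_{y\in Z}\bigl(f(y)-n\cdot \yocompdis{d}(\xi, y)\bigr),\qquad
M_{n}(\xi)=\inf_{y\in Z}\bigl(f(y)+n\cdot \yocompdis{d}(\xi, y)\bigr).
\]
The key point is finiteness. Using the growth bound together with $d(y, p)\le \yocompdis{d}(\xi, y)+\yocompdis{d}(\xi, p)$ one gets
\[
f(y)-n\cdot \yocompdis{d}(\xi, y)\le (A-n)\yocompdis{d}(\xi, y)+A\cdot \yocompdis{d}(\xi, p)+B',
\]
whose right-hand side tends to $-\infty$ as $\yocompdis{d}(\xi, y)\to \infty$ because $n>A$; hence the supremum is finite, and symmetrically the infimum defining $M_{n}$ is finite. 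For each fixed $y$ the map $\xi\mapsto f(y)\mp n\cdot \yocompdis{d}(\xi, y)$ is $n$-Lipschitz, so $W_{n}$ and $M_{n}$ are $n$-Lipschitz with respect to $\yocompdis{d}$; and any $n$-Lipschitz function $g$ obeys $|g(\xi)|\le |g(p)|+n\cdot \yocompdis{d}(\xi, p)$, so both functions automatically belong to $\yodbddsp{\yocompsp{Z}{d}}{\yocompdis{d}}$, which is (1). Taking $y=x$ for $x\in Z$ yields $M_{n}(x)\le f(x)\le W_{n}(x)$, which is (2).

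It remains to establish the integral estimate (3). I would first prove the pointwise monotone convergence $W_{n}(x)\downarrow f(x)$ and $M_{n}(x)\uparrow f(x)$ as $n\to \infty$ for each $x\in Z$. Monotonicity is immediate, and $W_{n}(x)\ge f(x)$, $M_{n}(x)\le f(x)$ hold by construction. For the limit, fix $x$ and $\epsilon>0$ and use continuity of $f$ to pick $\delta>0$ with $|f(y)-f(x)|<\epsilon$ whenever $d(x, y)<\delta$; splitting the supremum defining $W_{n}(x)$ into the regions $d(x, y)<\delta$ and $d(x, y)\ge \delta$, the first contributes at most $f(x)+\epsilon$ (the subtracted term being nonnegative), while on the second the displayed growth estimate drives the contribution below $f(x)$ once $n$ is large, so $W_{n}(x)\le f(x)+\epsilon$ for large $n$; the argument for $M_{n}$ is symmetric. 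Since $0\le W_{n}-f\le W_{n_{0}}-f\le 2\bigl(A\cdot d(\cdot, p)+B'\bigr)\in L^{1}(\mu)$ for $n\ge n_{0}$, the dominated convergence theorem gives $\int_{Z}(W_{n}-f)\ d\mu\to 0$, and likewise $\int_{Z}(f-M_{n})\ d\mu\to 0$; choosing $n$ so large that both integrals fall below $\epsilon$ yields the required $W=W_{n}$ and $M=M_{n}$.

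The step I expect to be the main obstacle is the pointwise convergence of the regularizations on the possibly incomplete space $Z$: one must simultaneously exploit the continuity of $f$ to control the region near $x$ and the linear growth bound with the strict inequality $n>A$ to control the far region, while keeping track that the convolutions are formed as suprema and infima over $Z$ yet are defined, Lipschitz, and bounded on the completion $\yocompsp{Z}{d}$. The verification of finiteness under the $\yodbddsym$-growth and the choice of an integrable dominating function are the places where the hypotheses $n>A$ and $\int_{Z}d(x, p)\ d\mu<\infty$ are genuinely used.
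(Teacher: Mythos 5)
Your proposal is correct and follows essentially the same route as the paper's proof: transferring the growth bound from $e$ to $d$ via $\metdis_{Z}(d, e)<\infty$, forming the McShane--Whitney sup/inf-convolutions $\yowapp{f}{r}$ and $\yomapp{f}{r}$ on the completion, verifying finiteness and the $r$-Lipschitz property for slopes exceeding $A$, proving pointwise convergence to $f$ by splitting into a near region (continuity) and a far region (linear growth), and concluding with the dominated convergence theorem. The only cosmetic differences are that you take an integer slope parameter and exploit monotonicity explicitly, and you prove the Lipschitz property of the convolutions directly where the paper cites a reference.
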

%
\begin{proof}
With respect to 
this proof, 
the author was inspired by  the proof of 
\cite[Subsection 8.5]{lebanon2012probability}. 
We make use of the 
McShane--Whitney extension 
(see 
\cite{MR1562984} 
and 
\cite{MR1501735}, 
see also 
 \cite[p.162]{MR0352214}). 
That extension is  sometimes effective 
for non-Lipschitz functions in some sense
(see, for example, \cite[Lemma 5.4]{MR3445278}).

For each 
$r\in (0, \infty)$, 
for each
 $x\in \yocompsp{Z}{d}$, 
we define 
\[
\yowapp{f}{r}(x)=\sup
\left\{\, f(q)-r\cdot \yocompdis{d}(x, q)\mid q\in Z\, \right\}
\]
and 
\[
\yomapp{f}{r}(x)=\inf 
\left\{\, f(q)+r\cdot \yocompdis{d}(x, q)\mid q\in Z\, \right\}. 
\]
Note that 
$\yomapp{f}{r}=-\yowapp{-f}{r}$. 
First, 
we
shall 
 show that
for a sufficiently 
 large 
$r\in (0, \infty)$, 
we have 
$\yowapp{f}{r}(x)<\infty$ 
and 
$-\infty<\yomapp{f}{r}(x)$. 
Since 
$f\in \yodbddsp{Z}{e}$, 
there exist 
$A, \tilde{B}\in (0, \infty)$
and 
$p\in Z$
satisfying  that 
$|f(x)|\le A\cdot e(x, p)+\widetilde{B}$ 
for all 
$x\in Z$. 
On account of 
$\metdis_{Z}(d, e)<\infty$, 
 we also  have 
 the inequality 
$|f(x)|\le A\cdot d(x, p)+B$ 
for all
 $x\in Z$, 
 where 
$B=\widetilde{B}+\metdis_{Z}(d, e)$. 
Take 
$r\in (0, \infty)$ 
so that 
$A\le r$. 
Under this stuiation, for every 
$q\in Z$, we obtain 
\begin{align*}
&f(q)-r\cdot \yocompdis{d}(x, q)
\le A\cdot d(q, p)+B-A\cdot \yocompdis{d}(x, q)\\
&
=B+A(\yocompdis{d}(q, p)-\yocompdis{d}(q, x))\le B+A\cdot \yocompdis{d}(x, p). 
\end{align*}
Thus, 
we have 
\[
\yowapp{f}{r}(x)\le A\cdot \yocompdis{d}(x, p)+B<\infty.
\] 
Since 
$\yomapp{f}{r}=-\yowapp{-f}{r}$,  and 
since 
$|-f(x)|=|f(x)|\le A\cdot d(x, p)+B$
for all 
$x\in Z$, 
we also have 
\[
\yomapp{f}{r}(x)\ge -A\cdot  \yocompdis{d}(x,  p)-B>-\infty.
\] 
As a result, 
we obtain two functions 
$\yomapp{f}{r}, \yowapp{f}{r}\colon \yocompsp{Z}{d}\to \rr$
such that 
$\yomapp{f}{r}(x)\le f(x)\le \yowapp{f}{r}(x)$
for all 
$x\in Z$.

In the same  way as  
 \cite[Theorem 2.1]{MR4103879}, 
 we see that 
 $\yomapp{f}{r}$ 
 and 
 $\yowapp{f}{r}$ 
 are
$r$-Lipschitz 
on
 $(\yocompsp{Z}{d}, \yocompdis{d})$, 
 and hence 
$\yomapp{f}{r}, \yowapp{f}{r}\in \yodbddsp{\yocompsp{Z}{d}}{\yocompdis{d}}$. 

Using 
a similar  method to
 \cite{MR1544388}, 
 and 
 \cite[Theorem 2.1 and Proposition 2.2]{MR4103879}, 
 we will show that, 
for each 
$x\in Z$, 
the values 
 $\yomapp{f}{r}(x)$ 
 and 
 $\yowapp{f}{r}(x)$ 
 convergence 
 to 
 $f(x)$  
 as 
 $r\to \infty$. 
 First we 
 deal with 
 the function 
 $\yowapp{f}{r}$.
  Take 
  an arbitrary 
  number 
  $\epsilon \in (0, \infty)$. 
  Since $f$ is continuous at 
  $x$, we can 
  find 
  $\delta\in (0, \infty)$ such that 
  if $d(x, q)<\delta$, 
  then 
$|f(x)-f(q)|\le \epsilon$. 
Take $k\in (0, \infty)$ so that 
\[
\delta\cdot k>\sup_{a\in Z}(f(a)-A\cdot d(a, x))-f(x)\ge 0, 
\]
and take 
$r\in (0, \infty)$
with 
$r>A+k$. 
Note that we have 
\[
\sup_{a\in Z}(f(a)-A\cdot d(a, x))\le B<\infty,
\] 
and this inequalities guarantee 
the existence of 
$k\in (0, \infty)$ 
taken above. 
We shall estimate 
$f(q)-rd(x, q)$. 
We divide the 
estimation 
into the case of 
$d(x, q)<\delta$
and the case of
$d(x, q)\ge \delta$. 
If 
$d(x, q)<\delta$, 
we have 
$f(q)-r\cdot d(q, x)\le 
f(q)\le   f(x)+\epsilon$. 
If $d(q, x)\ge \delta$, 
then 
we have 
\begin{align*}
&f(q)-rd(q, p)<
f(q)-(A+k)\cdot d(p, q)\\
&
=(f(q)-A\cdot d(p, q))-k\cdot d(p, q)\le 
(f(q)-A\cdot d(q, p))-\delta\cdot k\\
&
\le \sup_{a\in Z}(f(a)-A\cdot d(a, p))-
\left(\sup_{a\in Z}(f(a)-A\cdot d(a, p))-f(p)\right)=f(p).
\end{align*} 
Thus, 
 a sufficiently large number 
 $r\in (0, \infty)$, 
  we have
$f(x)\le \yowapp{f}{r}(x)\le f(x)+\epsilon$. 
This means that $\lim_{r\to \infty}\yowapp{f}{r}(x)=f(x)$. 
Using 
$\yomapp{f}{r}=-\yowapp{-f}{r}$, 
applying 
the previous discussion to 
$\yowapp{-f}{r}$ and $-f$, 
we also  obtain 
$\lim_{r\to \infty}\yomapp{f}{r}(x)=f(x)$. 
   Namely, 
 the map 
 $f$ 
 is a point-wise limit of 
  $\yomapp{f}{r}$ 
  and 
  $\yowapp{f}{r}$
   on 
  $Z$  
  as 
  $r\to \infty$.

Since 
$\yomapp{f}{r}(x)\le 
\yowapp{f}{r}(x)\le A\cdot \yocompdis{d}(x, p)+B$
for all 
$x\in \yocompsp{Z}{d}$, 
the functions 
  $\yomapp{f}{r}$ 
  and 
  $\yowapp{f}{r}$
are integrable 
with respect to 
$(Z, \mu)$, 
and hence, 
by the dominated convergence theorem
(or the monotone convergence theorem), 
we obtain 
\[
\lim_{r\to \infty}\int_{Z}\yomapp{f}{r}(x)\ d\mu(x)
=
\lim_{r\to \infty}\int_{Z}\yowapp{f}{r}(x)\ d\mu(x)
=
\int_{Z}f(x)\ d\mu(x). 
\]
In this setting,  take a sufficiently large 
$r\in (0, \infty)$ 
again, 
and 
put 
$M=\yomapp{f}{r}$ 
and 
$W=\yowapp{f}{r}$. 
Then 
we obtain 
the functions stated 
 in the proposition. 
 This finishes the proof. 
\end{proof}

Combining aforementioned statements, 
we can verify the 
following 
generalization of 
Theorem \ref{thm:wastop}, 
which gives   a description 
of the topologies 
 of the Wasserstein spaces 
 on incomplete spaces in a 
 similar 
 manner of the Portmanteau theorem.

\begin{thm}\label{thm:wastop20240918}
Let 
$(Z, d)$ 
be a metric space, 
and let 
$\{\mu_{i}\}_{i\in \zz_{\ge 0}}$
be a sequence 
in 
$\yowassp{Z}$
and 
take 
$\mu\in \yowassp{Z}$. 
Then the following statements are equivalent to 
each other. 
\begin{enumerate}[label=\textup{(\arabic*)}]

\item\label{item:wconvthm}
The sequence 
$\{\mu_{i}\}_{i\in \zz_{\ge 0}}$
converges to
 $\mu$
 in 
 $(\yowassp{Z}, \yowasdis{d})$.

\item\label{item:bddconvthm}
If a continuous function 
$f\colon Z\to \rr$
belongs to 
$\yodbddsp{Z}{d}$, 
then we have 
\[
\int_{Z}f(x)\ d\mu_{i}
\to \int_{Z}f(x)\ d\mu
\]
 as 
$i\to \infty$. 
\end{enumerate}
\end{thm}
%
\begin{proof}
We first assume that 
Statement \ref{item:wconvthm} is true.
To show 
Statement \ref{item:bddconvthm}, 
take a
continuous function
$f\colon Z\to \rr$
belongs to 
$\yodbddsp{Z}{d}$. 
Since 
$\yowasdis{d}(\mu_{i}, \mu)\to 0$
as 
$i\to \infty$,
Proposition \ref{prop:wawasamesame}
implies that 
\[
\yowasdis{\yocompdis{d}}(\yomext{\mu_{i}}, \yomext{\mu})\to 0
\]
as 
$i\to \infty$. 
Note that 
the set
$\yocompsp{Z}{d}\setminus Z$ 
is  null with respect to 
both 
$\yomext{\mu}$ 
and 
$\yomext{\mu_{i}}$
due to the constructions of 
$\yomext{\mu}$ 
and 
$\yomext{\mu_{i}}$
(Definition \ref{df:extmeasure}). 
For every 
$\epsilon \in(0, \infty)$, 
applying 
Proposition 
\ref{item:pointapproxlip}
to 
$f$ 
and 
$\yocompsp{Z}{d}$, 
for each 
$\epsilon \in (0, \infty)$, 
we obtain 
Lipschitz  functions 
$M$ 
and 
$W$ 
on
 $(\yocompsp{Z}{d}, \yocompdis{d})$
 such that 
 \[
 M\le f\le W, 
 \]
 \[
 \int_{Z}W-f\ d\mu<\epsilon, 
 \] 
 and 
 \[
 \int_{Z}f-M\ d\mu<\epsilon.
 \] 
Since 
$M, W\in \yodbddsp{\yocompsp{Z}{d}}{\yocompdis{d}}$, 
by 
$\yowasdis{\yocompdis{d}}(\yomext{\mu_{i}}, \yomext{\mu})\to 0$
as 
$i\to \infty$ 
and 
by 
Condition 
\ref{item:bddconv} 
in 
Theorem 
 \ref{thm:wastop}, 
we have  
\[
\int_{\yocompsp{Z}{d}}M\ d\yomext{\mu_{i}}\to \int_{\yocompsp{Z}{d}}M\ d\yomext{\mu}, 
\]
and 
\[
\int_{\yocompsp{Z}{d}}W\ d\yomext{\mu_{i}}\to \int_{\yocompsp{Z}{d}}W\ d\yomext{\mu}, 
\]
as 
$i\to \infty$. 
By 
$M\le f\le W$,
$\int_{Z}W-f\ d\mu<\epsilon$, 
and 
$\int_{Z}f-M\ d\mu<\epsilon$,  
we can compute as follows:
\begin{align*}
&
\int_{Z}f\ d\mu -\epsilon 
\le 
\int_{Z} M\ d\mu
=
\lim_{i\to \infty}\int_{Z} M\ d\mu_{i}
=
\liminf_{i\to \infty}\int_{Z} M\ d\mu_{i}
\\
&\le 
\liminf_{i\to \infty}\int_{Z} f\ d\mu_{i}
\le 
\limsup_{i\to \infty}\int_{Z} f\ d\mu_{i}
\le 
\limsup_{i\to \infty}\int_{Z} W\ d\mu_{i}\\
&
=
\lim_{i\to \infty}\int_{Z} W\ d\mu_{i}
=\int_{Z}W \ d\mu
\le \int_{Z}f \ d\mu+\epsilon. 
\end{align*}
In particular, 
we have 
\[
\int_{Z}f\ d\mu -\epsilon 
\le \liminf_{i\to \infty}\int_{Z} f\ d\mu_{i}
\le 
\limsup_{i\to \infty}\int_{Z} f\ d\mu_{i}
\le 
\int_{Z}f\ d\mu +\epsilon. 
\]
Since 
$\epsilon\in (0, \infty)$ 
is arbitrary, 
we conclude that
\[
\lim_{i\to \infty}\int_{Z}f\ d\mu_{i}
=\int_{Z}f\ d\mu. 
\]
This means that 
Statement
\ref{item:bddconvthm}
is valid. 

To verify the converse, 
 we assume that 
Statement
\ref{item:bddconvthm}
is 
true. 
Take 
an arbitrary  continuous function 
$F\colon \yocompsp{Z}{d}\to \rr$
belonging to 
$\yodbddsp{\yocompsp{Z}{d}}{\yocompdis{d}}$. 
Put $f=F|_{Z}$. 
Then 
$f\in \yodbddsp{Z}{d}$. 
Due to Statement 
\ref{item:bddconvthm}, 
we have 
\[
\lim_{i\to \infty}\int_{Z}f\ d\mu_{i}=
\int_{Z}f\ d\mu. 
\]
From this equality and  the fact that 
 and 
$\yocompsp{Z}{d}\setminus Z$
is null with respect to both 
each
$\yomext{\mu_{i}}$ 
and 
$\yomext{\mu}$, 
it follows that 
\[
\lim_{i\to \infty}\int_{\yocompsp{Z}{d}}F\ d\yomext{\mu_{i}}
=\int_{\yocompsp{Z}{d}}F\ d\yomext{\mu}. 
\]
Therefore, 
Condition 
\ref{item:bddconv}
of 
Theorem 
\ref{thm:wastop}
is true for 
$(\yocompsp{Z}{d}, \yocompdis{d})$, 
$\{\yomext{\mu_{i}}\}_{i\in \zz_{\ge 0}}$, 
and 
$\yomext{\mu}$. 
Thus, 
$\yowasdis{\yocompdis{d}}(\yomext{\mu_{i}}, \yomext{\mu})\to $ 
as 
$i\to \infty$, 
and hence, 
Proposition 
\ref{prop:wawasamesame}
proves  that 
$\yowasdis{d}(\mu_{i}, \mu)\to 0$
 as 
$i\to \infty$. 
This finishes the proof. 
\end{proof}

\begin{cor}\label{cor:wassametop}
Let 
$Z$ 
be a 
metrizable space, 
and fix
$\yoccl\in \yoslamet{Z}$. 
Then 
for every pair 
$d, e\in \yoccl$,
the metrics
$\yowasdis{d}$ 
and 
$\yowasdis{e}$
 generate
  the same topology on 
 $\yowassp{Z}$. 
\end{cor}
%
\begin{proof}
By the help of 
$\metdis_{X}(d, e)<\infty$, 
we conclude that 
$\yodbddsp{Z}{d}=\yodbddsp{Z}{e}$. 
Therefore 
Theorem 
\ref{thm:wastop20240918}
implies  Corollary 
\ref{cor:wassametop}. 
\end{proof}

Now we 
 observe that 
the relationship 
between 
the $1$-Wasserstein spaces and 
the supremum metrics on 
spaces of metrics. 

\begin{prop}\label{prop:wasisom}
Let 
$Z$ 
be a metrizable space, 
and fix a class 
$\yoccl\in \yoslamet{Z}$. 
Then 
the following statements are true:
\begin{enumerate}[label=\textup{(\arabic*)}]

\item\label{item:wasisom}
The map 
$\yowas \colon \met{Z}\to 
\met{\yowassp{Z}}$
defined by 
$d\mapsto \yowasdis{d}$
is an isometric embedding, 
i.e., 
for every pair 
$d, e\in \met{Z}$, 
we have 
\[
\metdis_{Z}(d, e)
=\metdis_{\yowassp{Z}}(\yowasdis{d}, \yowasdis{e}). 
\]

\item\label{item:wascomp}
If 
$d\in \yoccl$ 
is complete, 
then 
so is 
$\yowasdis{d}$. 

\end{enumerate}
\end{prop}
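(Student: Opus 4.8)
For \ref{item:wasisom} I would prove the two inequalities between $\metdis_{Z}(d, e)$ and $\metdis_{\yowassp{Z}}(\yowasdis{d}, \yowasdis{e})$ separately. The lower bound is immediate from the Dirac embedding: by Corollary \ref{lem:oswasext} we have $\yowasdis{d}(\yodmas{x}, \yodmas{y}) = d(x, y)$ and $\yowasdis{e}(\yodmas{x}, \yodmas{y}) = e(x, y)$ for all $x, y \in Z$, and since each $\yodmas{x}$ lies in $\yowassp{Z}$,
\[
\metdis_{\yowassp{Z}}(\yowasdis{d}, \yowasdis{e}) \ge \sup_{x, y \in Z} |\yowasdis{d}(\yodmas{x}, \yodmas{y}) - \yowasdis{e}(\yodmas{x}, \yodmas{y})| = \sup_{x, y \in Z}|d(x, y) - e(x, y)| = \metdis_{Z}(d, e).
\]
For the reverse inequality, set $C = \metdis_{Z}(d, e)$; if $C = \infty$ there is nothing to prove, so assume $C < \infty$, so that $d$ and $e$ lie in a common class and $\yowasdis{d}, \yowasdis{e}$ are finite on $\yowassp{Z}$. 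From $|d(s, t) - e(s, t)| \le C$ and the fact that each $\pi \in \Pi(\alpha, \beta)$ is a probability measure, we get $\int_{Z \times Z} d \, d\pi \le \int_{Z \times Z} e \, d\pi + C$; taking the infimum over $\pi$ gives $\yowasdis{d}(\alpha, \beta) \le \yowasdis{e}(\alpha, \beta) + C$, and by symmetry $|\yowasdis{d}(\alpha, \beta) - \yowasdis{e}(\alpha, \beta)| \le C$ for every $\alpha, \beta \in \yowassp{Z}$. Passing to the supremum yields $\metdis_{\yowassp{Z}}(\yowasdis{d}, \yowasdis{e}) \le C$, whence equality.

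For \ref{item:wascomp}, let $\{\mu_{i}\}_{i \in \zz_{\ge 0}}$ be a $\yowasdis{d}$-Cauchy sequence in $\yowassp{Z}$, with $d$ complete. By Proposition \ref{prop:measureksigma} each $\mu_{i}$ is concentrated on a $\sigma$-compact, hence separable, set $S_{i}$; put $Y = \overline{\bigcup_{i \in \zz_{\ge 0}} S_{i}}$, a closed separable subspace of $Z$. As a closed subset of a complete space, $(Y, d|_{Y^{2}})$ is complete and separable, that is, Polish, and every $\mu_{i}$ may be regarded as a Radon measure on $Y$. Since both marginals of any $\pi \in \Pi(\mu_{i}, \mu_{j})$ are concentrated on $Y$, such a $\pi$ is concentrated on $Y \times Y$, so the admissible couplings, and therefore the value of $\yowasdis{d}$, are the same whether computed in $Z$ or in $Y$. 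Thus $\{\mu_{i}\}$ is Cauchy in the Polish $1$-Wasserstein space $\yowassp{Y}$.

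It then remains to produce a limit in $\yowassp{Y}$ and transport it back to $Z$, and I expect this to be the crux of the argument. This is precisely the completeness of the $1$-Wasserstein space over a Polish space, which I would take from the classical theory, or rederive from the convergence criterion of Theorem \ref{thm:wastop} together with a tightness argument, obtaining $\mu \in \yowassp{Y}$ with $\yowasdis{d}(\mu_{i}, \mu) \to 0$. Because $\yowasdis{d}$ is intrinsic to $Y$ as noted above, this convergence persists in $\yowassp{Z}$; viewing $\mu$ as a Radon measure on $Z$ with $\yowasdis{d}(\mu, \yodmas{p}) < \infty$ places it in the class $\yoccl$, so that $\mu \in \yowassp{Z}$, and $(\yowassp{Z}, \yowasdis{d})$ is complete.
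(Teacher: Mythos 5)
Your proof of \ref{item:wasisom} is correct and essentially coincides with the paper's: the lower bound via the Dirac embedding (Corollary \ref{lem:oswasext}) is identical, and your per-coupling estimate $\int_{Z\times Z} d\, d\pi \le \int_{Z\times Z} e\, d\pi + C$ followed by an infimum over $\pi\in\Pi(\alpha,\beta)$ is only a cosmetic variant of the paper's choice of an $\epsilon$-optimal coupling for $e$. The genuine difference is in \ref{item:wascomp}. The paper disposes of completeness in one line by citing \cite[Theorem 2.7]{MR3858004}, which gives completeness of the $1$-Wasserstein space of Radon measures over an \emph{arbitrary} complete metric space; you instead reduce to the separable case: Proposition \ref{prop:measureksigma} concentrates each $\mu_i$ on a $\sigma$-compact (hence separable) set, the closed separable hull $Y$ is Polish because $d$ is complete, any $\pi\in\Pi(\mu_i,\mu_j)$ satisfies $\pi(Y\times Y)=1$ since both marginals are carried by $Y$, so $\yowasdis{d}$ is intrinsic to $Y$, and the classical completeness of the $1$-Wasserstein space over a Polish space (or Theorem \ref{thm:wastop} plus a tightness argument) produces the limit, which extends back to a Radon measure on $Z$ lying in the class $\yoccl$ by the triangle inequality. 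These reduction steps are all sound (Borel sets of the subspace $Y$ are traces of Borel sets of $Z$, restrictions and pushforwards of couplings preserve the transport cost, and Borel probability measures on a Polish space are automatically Radon). What your route buys is independence from the nonseparable Radon-measure theory of \cite{MR3858004}, using only the textbook Polish-space result plus the paper's own Proposition \ref{prop:measureksigma}; the cost is that the crux --- actual existence of the limit --- is still outsourced to an external theorem, exactly parallel to the paper's citation, so neither argument is more self-contained at that point. One small caveat: when $d, e\in\met{Z}$ lie in a common class different from the fixed $\yoccl$, the quantity $\yowasdis{d}$ may take the value $\infty$ on $\yowassp{Z}$, so your clause that $C<\infty$ makes $\yowasdis{d}, \yowasdis{e}$ finite on $\yowassp{Z}$ is not literally justified; but your coupling inequality holds in $[0,\infty]$ regardless, and the paper's statement glosses the same degenerate case, which never arises in its application where $d, e\in\yoccl$.
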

%
\begin{proof}
By  Corollary 
\ref{cor:oswasext}, 
we see that 
$\metdis_{Z}(d, e)\le 
\metdis_{\yowassp{Z}}(\yowasdis{d}, \yowasdis{e})$. 
Next, 
we prove the opposite inequality. 
Take 
$\alpha, \beta\in \yowassp{Z}$. 
For an arbitrary 
number 
$\epsilon\in (0, \infty)$, 
take 
$\nu\in \Pi(\alpha, \beta)$
such that 
\[
\int_{Z\times Z}e(s, t)\ d\nu\le \yowasdis{e}(\alpha, \beta)+\epsilon.
\]
In this setting, 
by the definition of the 
$1$-Wasserstein distance, 
we also obtain 
$\yowasdis{d}(\alpha,\beta)\le \int_{Z\times Z}d(s, t)\ d\nu$. 
Then we have 
\begin{align*}
&\yowasdis{d}(\alpha, \beta)-\yowasdis{e}(\alpha, \beta)
\le \int_{Z\times Z}d(s, t)\ d\nu-\int_{Z\times Z}e(s, t)\ d\nu+\epsilon \\
&=
\epsilon+\int_{Z\times Z}(d(s, t)-e(s, t)) \ d\nu
\le \epsilon+\int_{Z\times Z}|d(s, t)-e(s, t)| \ d\nu\\
&\le \epsilon+\int_{Z\times Z}\metdis_{Z}(d, e) \ d\nu
=
\epsilon+\metdis_{Z}(d, e). 
\end{align*}
Since 
$\alpha$,
 $\beta$, 
 and 
 $\epsilon$ 
are
 arbitrary, 
we obtain 
$\metdis_{\yowassp{Z}}(\yowasdis{d}, \yowasdis{e})
=\metdis_{Z}(d, e)$. 
Statement 
\ref{item:wascomp}
 follows from 
\cite[Theorem 2.7]{MR3858004}. 
\end{proof}



\subsubsection{Spaces of measurable functions}\label{subsec:spmea}
Fix  a topological space 
$\yomsp$, 
and a Borel probability measure 
$\yomea$ 
on 
$\yomsp$.
For a metrizable space 
$Z$, 
we denote by 
$\yoosmiscspw{Z}$
the set of all 
Borel measurable functions 
from
 $\yomsp$ 
 into 
 $Z$. 
 We define 
 $\yoosmiscemb{Z}\colon Z\to \yoosmiscspw{Z}$ defined 
 by 
$\yoosmiscemb{Z}(z)(a)=z$;
namely, 
each 
$\yoosmiscemb{Z}(z)$ 
 is a constant map. 
 For a metric 
 $d\in \met{Z}$, 
we also define a metric 
$\yoosmiscdis{d}$
on 
the space 
$\yoosmiscspw{Z}$
 by 
\[
\yoosmiscdis{d}(f, g)
=
\int_{\yomsp}d(f(t), g(t))\ d\yomea(t). 
\]
In general, 
the metric 
$\yoosmiscdis{d}(f, g)$
can take the value 
$\infty$. 
As is the case of 
the 
$1$-Wasserstein spaces, 
we define 
$\yoosmiscspc{Z}{\yoccl}$
by the set of all 
$f\in \yoosmiscspw{Z}$
such that 
$\yoosmiscdis{d}(f, \yoosmiscemb{Z}(a))<\infty$
for some/any 
$a\in Z$. 
When we fix 
a class 
$\yoccl\in \yoslamet{Z}$, 
we simply write 
$\yoosmiscsp{Z}=\yoosmiscspc{Z}{\yoccl}$. 

The space 
$(\yoosmiscsp{Z}, \yoosmiscdis{d})$
is an 
analogue of 
the ordinary 
$L^{1}$ 
spaces on 
the Euclidean spaces.
The author 
was
inspired by 
the papers
\cite{MR0368068}
and 
\cite{zarichnyi1996regular}
with respect to this construction. 

Let us observe some properties of 
spaces of measurable functions. 
\begin{prop}\label{prop:cbasics}
Let 
$\yomsp$
be a 
topological  space, and 
$\yomea$ 
be a Borel
probability measure on 
$\yomsp$. 
Let 
$Z$
be a metrizable space, 
fix 
$w\in\met{Z}$, 
and 
put 
$\yoccl=\yocclq{w}\in \yoslamet{Z}$. 
Then we have the following statements:
\begin{enumerate}[label=\textup{(\arabic*)}]
\item\label{item:oscext}
For every 
$d\in \yoccl$, 
and for every pair 
$x, y\in Z$, 
we have 
\[
\yoosmiscdis{d}\left(\yoosmiscemb{Z}(x), 
\yoosmiscemb{Z}(y)\right)
=d(x, y). 
\]
\item\label{item:cfinite}
For every 
$d\in \yoccl$, 
and 
for every pair  
$f,  g\in \yoosmiscspc{Z}{\yoccl}$, 
we have 
$\yoosmiscdis{d}(f, g)<\infty$. 
\item\label{item:cisom}
For every pair 
$d, e\in \yoccl$, 
we have
$\metdis_{Z}(d, e)=\metdis_{\yoosmiscsp{Z}}(\yoosmiscdis{d}, \yoosmiscdis{e})$. 
\end{enumerate}
\end{prop}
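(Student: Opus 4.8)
The plan is to establish the three statements in the order given, each mirroring its counterpart for $\ell^{1}$-products (Proposition \ref{prop:ellisom}) and for $1$-Wasserstein spaces (Proposition \ref{prop:wasisom}); the only genuinely new ingredient is the normalization $\mu(M)=1$. For Statement \ref{item:oscext}, I would simply note that $\yoosmiscemb{Z}(x)$ and $\yoosmiscemb{Z}(y)$ are the constant maps with values $x$ and $y$, so the integrand $t\mapsto d(\yoosmiscemb{Z}(x)(t), \yoosmiscemb{Z}(y)(t))$ is the constant $d(x,y)$ on $M$. Since $\mu$ is a probability measure, $\yoosmiscdis{d}(\yoosmiscemb{Z}(x), \yoosmiscemb{Z}(y))=\int_{M}d(x,y)\, d\mu=d(x,y)\cdot\mu(M)=d(x,y)$.

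For Statement \ref{item:cfinite}, I would fix any $a\in Z$ and use the pointwise triangle inequality $d(f(t), g(t))\le d(f(t), a)+d(a, g(t))$ for all $t\in M$. Integrating against $\mu$ and applying monotonicity gives $\yoosmiscdis{d}(f, g)\le \yoosmiscdis{d}(f, \yoosmiscemb{Z}(a))+\yoosmiscdis{d}(g, \yoosmiscemb{Z}(a))$, and both terms on the right are finite exactly because $f, g\in \yoosmiscspc{Z}{\yoccl}$. The same estimate, applied with two different base points, also verifies the ``some/any'' clause in the definition of $\yoosmiscspc{Z}{\yoccl}$, namely that the finiteness condition does not depend on the choice of $a$.

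For Statement \ref{item:cisom} I would prove the two inequalities separately. For the estimate $\metdis_{\yoosmiscsp{Z}}(\yoosmisc(d), \yoosmisc(e))\le \metdis_{Z}(d, e)$, take arbitrary $f, g\in \yoosmiscsp{Z}$; by Statement \ref{item:cfinite} both $\yoosmiscdis{d}(f,g)$ and $\yoosmiscdis{e}(f,g)$ are finite, so their difference is a single integral and
\[
\left|\yoosmiscdis{d}(f, g) - \yoosmiscdis{e}(f, g)\right| \le \int_{M}\left|d(f(t), g(t)) - e(f(t), g(t))\right|\, d\mu(t) \le \int_{M}\metdis_{Z}(d, e)\, d\mu = \metdis_{Z}(d, e),
\]
where the last equality again uses $\mu(M)=1$; taking the supremum over $f, g$ gives the claim. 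For the reverse estimate $\metdis_{Z}(d, e)\le \metdis_{\yoosmiscsp{Z}}(\yoosmisc(d), \yoosmisc(e))$, I would restrict attention to constant maps: each $\yoosmiscemb{Z}(x)$ lies in $\yoosmiscsp{Z}$ (as $\yoosmiscdis{d}(\yoosmiscemb{Z}(x), \yoosmiscemb{Z}(a))=d(x,a)<\infty$ by Statement \ref{item:oscext}), and that same statement yields $\left|\yoosmiscdis{d}(\yoosmiscemb{Z}(x), \yoosmiscemb{Z}(y)) - \yoosmiscdis{e}(\yoosmiscemb{Z}(x), \yoosmiscemb{Z}(y))\right| = \left|d(x,y)-e(x,y)\right|$; taking the supremum over $x, y\in Z$ finishes the proof.

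I do not expect a serious obstacle: the argument is a sequence of elementary integral estimates (monotonicity, the triangle inequality, and the probability normalization). The only points requiring care are measurability and finiteness. The integrand $t\mapsto d(f(t), g(t))$ is measurable because $d$ is continuous on $Z\times Z$ and $f, g$ are Borel measurable, which is already needed for $\yoosmiscdis{d}$ to be well defined; consequently $t\mapsto \left|d(f(t), g(t))-e(f(t), g(t))\right|$ is also measurable, so every integral above is legitimate. Finiteness, supplied by Statement \ref{item:cfinite}, is precisely what justifies writing the difference of the two integrals as a single integral in the estimate for Statement \ref{item:cisom}.
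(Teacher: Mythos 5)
Your treatments of Statements (1) and (3) are correct and coincide with the paper's own argument: (1) is the constant-integrand computation using $\mu(M)=1$, and (3) combines the pointwise bound $|d(f(t),g(t))-e(f(t),g(t))|\le \metdis_{Z}(d,e)$ integrated against the probability measure (legitimate, as you note, because Statement (2) makes both integrals finite) with the restriction to constant maps via Statement (1), exactly as the paper does.

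Your proof of Statement (2), however, contains a circularity at its key point. Membership of $f$ in $\yoosmiscspc{Z}{\yoccl}$ supplies finiteness of the distance to a constant map only with respect to a single reference metric of the class --- the fixed $w$ with $\yoccl=\yocclq{w}$; the ``some/any'' in the definition quantifies over the base point $a$, not over the metric. The actual content of Statement (2) --- parallel to the remark the paper makes about $\yowasspc{Z}{\yoccl}$ in the Wasserstein setting --- is precisely that this finiteness propagates to \emph{every} $d\in\yoccl$. Your sentence ``both terms on the right are finite exactly because $f,g\in\yoosmiscspc{Z}{\yoccl}$'' asserts this propagation instead of proving it, so as written the argument assumes its conclusion. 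The repair is the one step the paper's proof supplies and yours omits: since $d\sim w$, one has $d(f(x),a)\le w(f(x),a)+\metdis_{Z}(d,w)$ pointwise, and integrating against the probability measure gives
\[
\yoosmiscdis{d}(f, \yoosmiscemb{Z}(a))\le \yoosmiscdis{w}(f, \yoosmiscemb{Z}(a))+\metdis_{Z}(d,w)<\infty,
\]
because $\metdis_{Z}(d,w)<\infty$ for $d\in\yocclq{w}$. With this transfer inequality inserted, your triangle-inequality reduction (which matches the paper's reduction to the case $g=\yoosmiscemb{Z}(a)$) completes the proof; note also that this same estimate, not the triangle inequality alone, is what justifies the ``some/any'' independence across metrics in the class.
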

%
\begin{proof}
Statement 
\ref{item:oscext}
is follows from the definitions of 
$\yoosmiscdis{d}$ 
and 
$\yoosmiscemb{Z}$. 

Proof of 
Statement 
\ref{item:cfinite}: 
We only need to show the statement in the case 
of
 $g=\yoosmiscemb{Z}(a)$ 
 for some
 $a\in Z$. 
 By the definition of 
 $\yoosmiscspc{Z}{\yoccl}$, 
 we have 
 $\yoosmiscdis{w}(f, g)<\infty$. 
 Then 
we have 
\begin{align*}
\yoosmiscdis{d}(f, g)&=
\int_{\yomsp}d(f(t), a)\ d\yomea(t)
\\
&\le \int_{\yomsp}w(f(t), a)\ d\yomea(t)
+\metdis_{Z}(d, w)\\
&=
\yoosmiscdis{w}(f, g)
+\metdis_{Z}(d, w)
<\infty. 
\end{align*}
This proves Statement 
\ref{item:cfinite}.

Proof of \ref{item:cisom}: 
For every pair 
$f, g\in \yoosmiscsp{Z}$, 
we have 
\begin{align*}
&\yoosmiscdis{d}(f, g)-\yoosmiscdis{e}(f, g)
=\int_{\yomsp}d(f(t), g(t))-e(f(t), g(t))\ d\yomea(t)\\
&\le 
\int_{\yomsp}|d(f(t), g(t))-e(f(t), g(t))|\ d\yomea(t)
\le \int_{\yomsp}\metdis_{Z}(d, e)\ d\yomea(t)\\
&=
\metdis_{\yomsp}(d, e). 
\end{align*}
Then 
we obtain 
$\metdis_{\yoosmiscsp{Z}}(\yoosmiscdis{d}, \yoosmiscdis{e})\le \metdis_{Z}(d, e)$. 
Statement 
\ref{item:oscext}
shows that 
$\metdis_{Z}(d, e)\le \metdis_{\yoosmiscsp{Z}}(\yoosmiscdis{d}, \yoosmiscdis{e})$, 
and hence 
\[
\metdis_{Z}(d, e)=\metdis_{\yoosmiscsp{Z}}(\yoosmiscdis{d}, \yoosmiscdis{e}).
\]

This finishes the proof. 
\end{proof}

We next consider the 
topologies of 
$\yoosmiscsp{Z}$. 
We need the following observation on 
continuous maps on compact spaces. 
\begin{lem}\label{lem:cptetaep}
Let 
$Y$
 be a metrizable space, 
 and take 
 $\yoccl\in \yoslamet{Y}$
 and 
 $d, e\in \yoccl$. 
Let 
$K$ be a compact 
metrizable space, 
and 
$q\colon K\to Y$
be a continuous map. 
Then, 
for every 
$\epsilon \in (0, \infty)$, 
 there exists 
$\eta\in (0, \infty)$ such 
that
for every 
$t\in K$ and 
for every 
$y\in Z$, 
 if 
$d(q(t), y)<\eta$, 
then 
$e(q(t), y)<\epsilon$. 
\end{lem}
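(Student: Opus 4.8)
The plan is to argue by contradiction, exploiting the sequential compactness of the compact metrizable space $K$ together with the fact that both $d$ and $e$ belong to $\met{Y}$ and hence generate the common topology of $Y$. I would deliberately \emph{not} try to reduce the statement to the uniform continuity of the identity map $(Y,d)\to(Y,e)$ on a compact set: the point $y$ is allowed to range over all of $Y$, and the set of $y$ lying within a fixed $d$-distance of the compact set $q(K)$ need not be compact (for instance when $d$ is incomplete), so a naive uniform-continuity argument is unavailable. This is exactly where the difficulty sits, and the contradiction argument circumvents it by pinning the offending points down to a single limit inside $q(K)$.

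Concretely, I suppose the conclusion fails. Then there is $\epsilon_{0}\in(0,\infty)$ such that for every $n\in\zz_{\ge 0}$ one can choose $t_{n}\in K$ and $y_{n}\in Y$ with
\[
d(q(t_{n}), y_{n})<\frac{1}{n+1}\qquad\text{and}\qquad e(q(t_{n}), y_{n})\ge \epsilon_{0}.
\]
Since $K$ is compact and metrizable, it is sequentially compact, so passing to a subsequence (which I do not relabel) I may assume $t_{n}\to t_{*}$ in $K$ for some $t_{*}\in K$. Because $q$ is continuous, $q(t_{n})\to q(t_{*})$ in the topology of $Y$.

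The remaining step is to transport this single convergence into both metrics. As $d\in\met{Y}$ induces the topology of $Y$, we have $d(q(t_{n}), q(t_{*}))\to 0$, and the triangle inequality gives
\[
d(q(t_{*}), y_{n})\le d(q(t_{*}), q(t_{n}))+d(q(t_{n}), y_{n})\to 0,
\]
so that $y_{n}\to q(t_{*})$ in the $d$-topology, that is, in the topology of $Y$. Since $e\in\met{Y}$ induces this same topology, both $e(q(t_{n}), q(t_{*}))\to 0$ and $e(q(t_{*}), y_{n})\to 0$, whence
\[
e(q(t_{n}), y_{n})\le e(q(t_{n}), q(t_{*}))+e(q(t_{*}), y_{n})\to 0,
\]
contradicting $e(q(t_{n}), y_{n})\ge \epsilon_{0}>0$. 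Therefore the required $\eta$ exists.

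I should note that the argument uses only that $d$ and $e$ lie in $\met{Y}$, so that they generate one and the same topology, together with the sequential compactness of $K$; the finiteness $\metdis_{Y}(d,e)<\infty$ built into $d,e\in\yoccl$ plays no role here, although it is of course harmless to retain it for consistency with the surrounding discussion.
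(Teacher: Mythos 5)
Your proof is correct, and it takes a genuinely different route from the paper's. You argue by contradiction via sequential compactness of $K$: extracting $t_{n}\to t_{*}$, you funnel both offending sequences into the single point $q(t_{*})$, and the fact that $d$ and $e$ generate one common topology converts $d$-convergence of $y_{n}$ to $q(t_{*})$ into $e$-convergence, killing the bound $e(q(t_{n}),y_{n})\ge\epsilon_{0}$. The paper instead gives a direct, non-sequential argument: for each $s\in K$ it defines the set $R(s)$ of radii $u$ for which $d(q(s),y)<u$ forces $e(q(s),y)+u<\epsilon$, shows that $r(s)=\sup R(s)$ is lower semi-continuous on $K$ (the built-in ``$+u$'' slack is what makes the semi-continuity estimate close), and takes $\eta=\frac{1}{2}\min_{K}r>0$ by compactness. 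Your version is shorter and more elementary, at the cost of being non-constructive, whereas the paper's produces $\eta$ as the minimum of an explicit modulus function; both use only that $d,e\in\met{Y}$, and your observation that $\metdis_{Y}(d,e)<\infty$ is never needed is accurate (the paper's proof likewise never invokes it). One quibble: your preliminary claim that a uniform-continuity-style argument is ``unavailable'' is overstated. While it is true that a $d$-neighborhood of $q(K)$ need not be compact, the standard pointwise-$\delta$ plus finite-subcover argument centered on points of the compact image $q(K)$ does succeed: choose for each $p\in q(K)$ a $\delta_{p}$ with $d(p,y)<\delta_{p}\Rightarrow e(p,y)<\epsilon/2$, extract a finite subcover by the balls $U(p,\delta_{p}/2;d)$, and set $\eta=\min_{i}\delta_{p_{i}}/2$; a triangle-inequality split through the nearest center $p_{i}$ then gives $e(q(t),y)<\epsilon$. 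This covering argument is essentially what the paper's lower semi-continuity construction encodes, so the obstruction you describe does not actually block that route --- but since your contradiction argument is self-contained and valid, this does not affect the correctness of your proof.
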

%
\begin{proof}
We employ 
a similar   idea as  the proof of 
\cite[Proposition 3]{MR4586584}. 
For each 
$s\in K$, 
we denote by 
$R(s)$
the set of 
all 
$u\in (0, \infty)$
such that 
for every $y\in Z$, 
if $d(q(s), y)<u$, 
then 
$e(q(s), y)+u<\epsilon$. 
Note that 
$R(s)\neq \emptyset$
since $e$ and $d$
generate the same topology on 
$X$. 
The definition of
 $R(s)$ 
 seems strange, but
 it is 
effective for our lemma. 
We now prove that 
the map 
$r\colon K\to (0, \infty)$
defined by 
$s\mapsto \sup R(s)$
is 
lower semi-continuous, 
i.e., 
for every 
$a\in \rr$, 
we shall show  that 
$r^{-1}((a, \infty))$
 is open in 
 $K$. 
According to the definition of 
$R(s)$, 
 we see that 
 $r(s)\le \epsilon$. 
 Take 
$s\in r^{-1}((a, \infty))$. 
By the 
definition of 
$r$, 
we can take 
$u\in R(s)$ 
such that 
$a<u-\delta_{0}<u\le r(s)$, 
where 
$\delta_{0}=2^{-1}(r(s)-a)\in (0, \infty)$. 
Fix  
$w\in \met{K}$. 
Since 
the map 
$q\colon K\to Y$
 is continuous, 
 we can find 
$\delta_{1}\in (0,\infty)$
such that 
\begin{enumerate}
\item 
if 
$w(s, t)<\delta_{1}$, 
then  we have 
$d(q(s), q(t))<\delta_{0}$
and $e(q(s), q(t))<\delta_{0}$. 

\end{enumerate}

Assume that 
$t\in U(s, \delta_{1}; w)$. 
Let us verify 
$u-\delta_{0}\in R(t)$. 
Namely, 
from now on, 
we shall show that, 
for every 
$y\in X$,  
the inequality 
$d(q(t), y)<u-\delta_{0}$
implies that 
$e(q(t), y)+(u-\delta_{0})<\epsilon$. 
Under the condition
$d(q(t), y)<u-\delta_{0}$, 
we have 
\[
d(q(s), y)\le 
d(q(s), q(t))+d(q(t), y)
<\delta_{0}+(u-\delta_{0})=u. 
\]
Namely, 
$d(q(s), y)<u$. 
Combining $d(q(s), y)<u$ and 
$u\in R(s)$,  we 
also have 
\begin{align*}
e(q(t), y)+(u-\delta_{0})
&\le 
e(q(t), q(s))+e(q(s), y)+(u-\delta_{0})
\\
&<\delta_{0}+e(q(s), y)+(u-\delta_{0})
= e(q(s), y)+u<\epsilon. 
\end{align*}
Thus 
$u-\delta_{0}\in R(t)$
and 
$a<r(t)$. 
This means that 
$U(s, \delta_{1}; w)\yosub  r^{-1}((a, \infty))$,
i.e., 
the set 
$ r^{-1}((a, \infty))$ is open in 
$K$. 
Thus 
$r$ 
is lower semi-continuous on 
$K$. 
Since $K$ is compact
and $r$ is lower semi-continuous, 
the map 
$r$ has a minimum $r_{\min}>0$. 
Put 
$\eta=\frac{1}{2}r_{\min}$. 
Then it   is the number  as desired. 
\end{proof}

We now show that  the topology induced by 
$\yoosmiscdis{d}$ does not  depend on 
the choice of 
$d\in \yoccl$
when 
$\yomsp$
 is Polish.  
\begin{prop}\label{prop:cbasicstop}
Let 
$\yomsp$
be a Polish space, and 
$\yomea$ 
be a Borel
probability measure on 
$\yomsp$. 
Let 
$Z$
be a metrizable space, 
fix 
$w\in\met{Z}$, 
put 
$\yoccl=\yocclq{w}\in \yoslamet{Z}$. 
Then 
for every pair 
$d, e\in \yoccl$, 
the metrics
$\yoosmisc(d)$ and 
$\yoosmisc(e)$ generate
 the 
same topology on 
$\yoosmiscsp{Z}$. 

\end{prop}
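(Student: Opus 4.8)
The plan is to reduce the statement to sequential convergence: since metric topologies are determined by their convergent sequences, it suffices to show that for a sequence $\{f_i\}_{i \in \zz_{\ge 0}}$ in $\yoosmiscsp{Z}$ and a point $f \in \yoosmiscsp{Z}$, convergence $\yoosmiscdis{d}(f_i, f) \to 0$ implies $\yoosmiscdis{e}(f_i, f) \to 0$; the reverse implication follows by interchanging $d$ and $e$. So I would assume $\yoosmiscdis{d}(f_i, f) \to 0$, fix arbitrary $\epsilon, \delta \in (0, \infty)$, and aim to bound $\limsup_{i} \yoosmiscdis{e}(f_i, f)$ by a quantity that tends to $0$ as $\epsilon, \delta \to 0$.

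The essential difficulty is that $d$ and $e$ are only topologically equivalent on $Z$, not uniformly so: there is no constant $C$ with $e \le C d$, hence one cannot dominate $\yoosmiscdis{e}$ by $\yoosmiscdis{d}$ directly. Two facts compensate for this. First, because $d, e \in \yoccl$ we have the global bound $|d(x, y) - e(x, y)| \le \metdis_Z(d, e) < \infty$. Second, Lemma \ref{lem:cptetaep} supplies a uniform modulus of equivalence along any continuous map out of a compact space. The bridge between them is Lusin's theorem: since $M$ is Polish, $\mu$ is a Radon measure, so I can choose a compact $K \subseteq M$ with $\mu(M \setminus K) < \delta$ such that $f|_K$ is continuous. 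Applying Lemma \ref{lem:cptetaep} to the continuous map $q = f|_K \colon K \to Z$ and to $\epsilon$ then yields $\eta \in (0, \infty)$ with the property that for all $x \in K$ and $y \in Z$, $d(f(x), y) < \eta$ forces $e(f(x), y) < \epsilon$.

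With $K$ and $\eta$ fixed, I would partition $M$ into the three measurable pieces $K \cap \{x : d(f_i(x), f(x)) < \eta\}$, the set $\{x : d(f_i(x), f(x)) \ge \eta\}$, and $(M \setminus K) \cap \{x : d(f_i(x), f(x)) < \eta\}$, and estimate $\int_M e(f_i(x), f(x))\, d\mu(x)$ on each. On the first piece the modulus from Lemma \ref{lem:cptetaep} gives $e(f_i(x), f(x)) < \epsilon$, so its contribution is at most $\epsilon$. On the second piece Markov's inequality yields $\mu(\{d(f_i, f) \ge \eta\}) \le \eta^{-1}\yoosmiscdis{d}(f_i, f)$, and combining this with $e \le d + \metdis_Z(d, e)$ bounds the contribution by $\yoosmiscdis{d}(f_i, f) + \metdis_Z(d, e)\,\eta^{-1}\yoosmiscdis{d}(f_i, f)$, which tends to $0$ as $i \to \infty$. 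On the third piece $e \le d + \metdis_Z(d, e)$ again bounds the contribution by $\yoosmiscdis{d}(f_i, f) + \metdis_Z(d, e)\,\delta$. Summing and passing to $\limsup_i$ leaves $\epsilon + \metdis_Z(d, e)\,\delta$; letting $\epsilon, \delta \to 0$ gives $\yoosmiscdis{e}(f_i, f) \to 0$, as required.

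The step I expect to be the main obstacle is the legitimacy of applying Lusin's theorem when $Z$ is not assumed separable, so that a priori $f$ need not have separable range. I would resolve this by first showing that $f$ is $\mu$-almost separably valued. Indeed, since $\mu$ is Radon I may choose compact sets $K_n \subseteq M$ with $\mu(M \setminus K_n) < 1/n$; as $f$ is Borel measurable, each image $f(K_n)$ is an analytic subset of $Z$ and is therefore separable, so $Z_0 = \overline{\bigcup_n f(K_n)}$ is a separable subspace of $Z$ with $\mu(f^{-1}(Z_0)) = 1$. Regarding $f$ as a Borel map into the separable metric space $Z_0$ then legitimises the use of Lusin's theorem above, while a map continuous into $Z_0$ is automatically continuous into $Z$, so Lemma \ref{lem:cptetaep} applies without change. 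Carrying out these Borel-measurability reductions carefully is where the argument is most delicate.
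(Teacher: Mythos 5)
Your proposal is correct and follows essentially the same route as the paper's proof: Lusin's theorem (made legitimate by the separable range of a Borel map on a Polish space), the compactness modulus from Lemma \ref{lem:cptetaep}, Chebyshev's inequality on $\{\, t : \eta \le d(f_i(t), f(t))\,\}$, and the bound $e \le d + \metdis_{Z}(d, e)$, applied to exactly the same three-piece partition of $M$. The only cosmetic differences are that the paper argues by $\epsilon$--$\delta$ continuity at a fixed $q$ with $\delta = \eta^{2}$ rather than by sequences and $\limsup$, and that it gets separability directly by citing \cite[Vol II, Lemma 6.10.16]{MR2267655} for $f$ on all of $M$, so your almost-separably-valued reduction via the compacta $K_n$ (whose appeal to analyticity of $f(K_n)$ in a possibly nonseparable $Z$ would in any case rest on that same lemma) is sound but unnecessary.
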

%
\begin{proof}
Fix 
$q\in \yoosmiscsp{Z}$. 
We will show that 
for every 
$\epsilon\in (0, \infty)$, 
there exists 
$\delta\in (0, \infty)$ 
such that 
if 
 $f\in  \yoosmiscsp{Z}$ 
 satisfies 
$\yoosmiscdis{d}(q, f)<\delta$, 
then  we 
have 
$\yoosmiscdis{e}(q, f)< \epsilon$.

Put 
$C=\metdis_{Z}(d, e)<\infty$ 
and 
take  a number 
$\eta_{0}\in (0, \infty)$ 
such that 
\begin{enumerate}[label=\textup{(k\arabic*)}, series=kproperties]
\item\label{item:kproperty}
$2(\eta_{0}+C)\eta_{0}< 2^{-1}\epsilon$.

\end{enumerate}

Since 
$q\colon \yomsp\to Z$ is 
Borel measurable and
 $\yomsp$ 
 is Polish, 
by 
\cite[Vol II, Lemma 6.10.16]{MR2267655}, 
the image 
$q(\yomsp)$
 of 
$q$
 is separable. 
 Note that 
 the map 
 $q\colon \yomsp\to q(\yomsp)$
 is also Borel measurable. 
 Thus 
we can apply 
 Lusin's theorem 
\cite[Theorem 17.12]{MR1321597}
to 
$q\colon \yomsp\to q(\yomsp)$
and 
to
the number 
$\eta_{0}$
 taken above. 
As a result, 
we obtain 
a
compact subset 
$K$ 
of 
$\yomsp$ 
such 
that 
\begin{enumerate}[resume*=kproperties]
\item\label{item:k123}
$\yomea(\yomsp\setminus K)< \eta_{0}$; 
\item\label{item:kconti}
the
restricted  function 
$q|_{K}\colon K\to Z$ is 
continuous on 
$K$. 
\end{enumerate}
Take
 a sufficiently small 
 number
 $\eta\in (0, \infty)$ 
 such that 
\begin{enumerate}[resume*=kproperties]
\item\label{item:k1}
we have 
$\eta<\eta_{0}$;

\item\label{item:k3}
for every  
$s\in K$, 
and for every 
$y\in Z$, 
if 
$d(q(s), y)<\eta$, 
then 
we have 
$e(q(s), y)<2^{-1}\epsilon$. 
\end{enumerate}
Remark that 
the property 
\ref{item:k3}
is guaranteed by 
\ref{item:kconti} and 
Lemma \ref{lem:cptetaep}.

Put 
$\delta=\eta^{2}$
and 
assume that 
$\yoosmisc(d)(q, f)<\delta(=\eta^{2})$. 
We also put 
\[
R=\{\, t\in \yomsp \mid \eta\le d(q(t), f(t))\, \}. 
\]
Then we have 
$\yomea(R)\le \eta$
by 
Chebyshev's inequality. 
Thus 
using 
\ref{item:kproperty}
and 
\ref{item:k1}, 
we also have 
\begin{align*}
&\int_{R} e(q(t), f(t))\ d\yomea(t)
\le 
\int_{R} d(q(t), f(t))+\metdis_{Z}(d, e)\ d\yomea(t)
\\
&=
\int_{R}d(q(t), f(t))\ d\yomea(t)
 +\int_{R}\metdis_{Z}(d, e)\ d\yomea(t)
  \\
&\le \yoosmisc(d)(q, f)+C\eta<\eta^{2}+C\eta=
(\eta+C)\eta\le 
(\eta_{0}+C)\eta_{0}. 
\end{align*}

For each 
$s\in (\yomsp\setminus R)\cap  K$, 
we have 
$d(q(s), f(s))<\eta$, 
and hence, 
by the property 
\ref{item:k3}
 of  
 $K$, 
we have 
$e(q(s), f(s))<2^{-1}\epsilon$. 
Thus, 
\[
\int_{(\yomsp\setminus R)\cap K} e(q(t), f(t))\ d\yomea(t) \le 
\int_{(\yomsp\setminus R)\cap K} 2^{-1}\epsilon
\ d\yomea(t) \le 
2^{-1}\epsilon. 
\]
Using 
$\metdis_{Z}(d, e)=C$
and 
$e(q(t), f(t))\le d(q(t), f(t))+C$
for all 
$t\in \yomsp$, 
 due to 
 and 
 \ref{item:kproperty}, 
\ref{item:k123}, 
and 
\ref{item:k1}
we obtain 
\begin{align*}
&\int_{(\yomsp\setminus R)\setminus K} e(q(t), f(t))\ d\yomea(t)= 
\int_{\yomsp\setminus (R\cup  K)}e(q(t), f(t))\ d\yomea(t)\\
&\le \int_{\yomsp\setminus (R\cup K)}d(q(t), f(t))+
C  \ d\yomea(t)\\
&\le \eta\cdot \yomea(\yomsp\setminus (R\cup K)) 
+C\cdot \yomea(\yomsp\setminus K)
\\
&\le 
\eta\cdot \yomea(\yomsp\setminus K) 
+C\cdot \yomea(\yomsp\setminus K)
<
\eta\cdot \eta_{0}+C\eta_{0}
< (\eta_{0}+C)\eta_{0}. 
\end{align*}

Combining these estimations, 
we also obtain 
\begin{align*}
&\int_{\yomsp} e(q(t), f(t))\ d\yomea(t)
=\int_{R} e(q(t), f(t))\ d\yomea(t)
+\int_{\yomsp\setminus R} e(q(t), f(t))\ d\yomea(t)\\
&=
\int_{R} e(q(t), f(t))\ d\yomea(t)+
\int_{(\yomsp\setminus R)\cap K} e(q(t), f(t))\ d\yomea(t)
+\\
&\int_{(\yomsp\setminus R)\setminus K} e(q(t), f(t))\ d\yomea(t)
\\
&< 
(\eta_{0}+C)\eta_{0}+2^{-1}\epsilon+(\eta_{0}+C)\eta_{0}= 2(\eta_{0}+C)\eta_{0}+2^{-1}\epsilon
\\
&<\frac{\epsilon}{2}+\frac{\epsilon}{2}= \epsilon. 
\end{align*}
That means that 
$\yoosmisc(e)(q, f)< \epsilon$. 
Replacing the role of 
$d$ 
with that of 
$e$, 
we conclude that 
$\yoosmiscdis{d}$ 
and 
$\yoosmiscdis{e}$
generate
 the 
same topology. 
\end{proof}

\begin{rmk}
A metric measure space
$(\yomsp, \yomea)$, 
where 
$\yomsp$ 
is Polish, 
is almost  the same as
the unit interval with the Lebesgue
 measure
 (see \cite[Lemma 4.2]{MR3445278} and 
 \cite[(17.41)]{MR1321597}). 
 Thus, our construction of 
 $\yoosmiscsp{Z}$ 
 roughly coincides with  the space of 
 measurable maps from 
 the unit  interval into
  $Z$. 
\end{rmk}

In what follows, 
whenever we use 
 $\yoosmiscsp{Z}$, 
we always assume that 
$\yomsp=\yoomega$; 
 namely, 
$\yomsp$ 
is the 
countable 
discrete space.
Under this assumption, 
the set of 
all measurable maps from 
$\yoomega$ 
into 
$Z$
is identical with  
the set of 
all maps from 
$\yoomega$
into 
$Z$. 
In the proof of 
the main theorem, 
we only  consider 
the measure 
$\yomasq$ 
on
$\yoomega$ 
defined by 
\[
\yomasq=\sum_{s\in \yoomega}\frac{1}{2^{s+1}}\yodmas{s}. 
\]

\subsection{Osmotic constructions}
In this subsection, 
we  introduce the notion of 
osmotic constructions as  
an abstraction 
from those three
constructions explained in the previous subsection.

Assume that 
a construction
$\yoosmisf$ 
of metric spaces
consists of 
three data
 for every metrizable space 
$Z$ 
and 
for every 
$\yoccl\in \yoslamet{Z}$:
\begin{enumerate}[label=\textup{(A\arabic*)}]
\item 
a metrizable space 
$\yoosmisfspq{Z}{\yoccl}$; 
\item 
a topological embedding 
$\yoosmisfemb{Z}\colon Z\to \yoosmisfspq{Z}{\yoccl}$; 
\item 
a map 
$\yoosmisf$
from 
$\yoccl$
into a set of metrics on 
$\yoosmisfspq{Z}{\yoccl}$. 
Namely,  
each 
$d\in \yoccl$ 
is mapped to 
a metric 
$\yoosmisfdis{d}$ 
on 
$\yoosmisfspq{Z}{\yoccl}$.  
\end{enumerate}
In this setting, 
we say that
$\yoosmisf$
is 
\yoemph{osmotic} 
if the following conditions are satisfied:
\begin{enumerate}[label=\textup{(O\arabic*)}]

\item 
for each 
$d\in \yoccl$
we have 
\[
\yoosmisfdis{d}
\left(\yoosmisfemb{Z}(x), \yoosmisfemb{Z}(y)\right)
=d(x, y); 
\]

\item 
for every pair 
$d, e\in \yoccl$, 
the metrics 
$\yoosmisfdis{d}$
and 
$\yoosmisfdis{e}$
generate the 
same topology of 
$\yoosmisfspq{Z}{\yoccl}$. 
Namely, 
we have 
$\yoosmisfdis{d}\in \met{\yoosmisfspq{Z}{\yoccl}}$. 
This means that 
the map 
$\yoosmisf$
from 
$\yoccl$
into a set of metrics on 
$\yoosmisfspq{Z}{\yoccl}$
becomes  a map 
$\yoccl\to \met{\yoosmisfspq{Z}{\yoccl}}$;

\item
the map 
$\yoosmisf\colon \yoccl\to \met{\yoosmisfspq{Z}{\yoccl}}$
satisfies 
that, 
for 
every pair 
$d, e\in \yoccl$, 
we have 
\[
\metdis_{Z}(d, e)=
\metdis_{\yoosmisfspq{Z}{\yoccl}}(\yoosmisfdis{d},\yoosmisfdis{e}). 
\] 

\end{enumerate}
Namely, 
these conditions  mean that 
$d$ 
can be naturally extended 
into a metric on 
$\yoosmisfsp{Z}$
as if  
 $d\in \yoccl$
 on 
 $Z$
osmoses 
from 
$\yoosmisfemb{Z}(Z)(=Z)$
into 
 an 
extension  space 
$\yoosmisfsp{Z}$.

We can summarize
the results in 
the previous subsection using the 
concept of 
osmotic constructions.
\begin{thm}\label{thm:osmotic}
The construction
$\yoosmisa$
 of 
 $\ell^{1}$-products, 
the construction 
$\yowas$ 
of 
$1$-Wasserstein
 spaces, 
and 
the construction
$\yoosmisc$ 
of spaces of measurable functions
are osmotic. 
\end{thm}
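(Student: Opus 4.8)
The plan is to read Theorem \ref{thm:osmotic} as a bookkeeping statement: for each of the three constructions the data (A1)--(A3) have already been fixed in the corresponding subsection, so it remains only to verify the three axioms (O1)--(O3) by citing the propositions established above. I would treat the three constructions in turn, matching each osmotic condition to exactly one previously proved result, so that the "proof" is really an assembly of the preceding subsections rather than a new argument.

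For the $\ell^{1}$-product $\yoosmisa$, everything is contained in Proposition \ref{prop:ellisom}: part \ref{item:osaext} is precisely (O1); part \ref{item:osmisatop} yields (O2), since $\yoosmisadis{d}$ always generates the product topology of $Z\times S$ regardless of the choice of $d$; and part \ref{item:osaisom} is (O3). Thus this case is immediate, with the embedding $\yoosmisaemb{Z}(x)=(x,o)$ manifestly a topological embedding of $Z$ into $Z\times S$.

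For the Wasserstein construction $\yowas$ on the class-dependent space $\yowassp{Z}=\yowasspc{Z}{\yoccl}$, I would invoke Corollary \ref{lem:oswasext} for (O1), Proposition \ref{prop:wassametop} for (O2), and Proposition \ref{prop:wasisom}\ref{item:wasisom} for (O3). The one point I would stress here is well-definedness: the passage to $\yowasspc{Z}{\yoccl}$ is exactly what makes $\yowasdis{d}$ finite-valued, hence a genuine metric, on the whole space, because if $\yowasdis{d}(\alpha,\yodmas{p})<\infty$ and $\yowasdis{d}(\beta,\yodmas{p})<\infty$ then the triangle inequality gives $\yowasdis{d}(\alpha,\beta)<\infty$.

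For the space of measurable functions $\yoosmisc$, I would read (O1) off Proposition \ref{prop:cbasics}\ref{item:oscext}, finiteness of the metric off Proposition \ref{prop:cbasics}\ref{item:cfinite}, and (O3) off Proposition \ref{prop:cbasics}\ref{item:cisom}, while (O2) is Proposition \ref{prop:cbasicstop}. Here the only genuine subtlety, and the step I expect to require care rather than a new idea, is that Proposition \ref{prop:cbasicstop} assumes $M$ to be Polish, so the topological axiom (O2) for $\yoosmisc$ is valid only under that hypothesis; since the main theorem ultimately fixes $M=\yoomega$ with the measure $\yomasq$, which is Polish, this is precisely the setting in force, and I would flag the assumption explicitly. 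Finally, in all three cases the canonical map $\yoosmisfemb{Z}$ is a topological embedding, consistent with (A2): on its image $\yoosmisfdis{d}$ restricts to $d$ by (O1), and $\yoosmisfdis{d}$ generates the topology of the extension space by (O2), so $\yoosmisfemb{Z}$ is isometric with respect to $d$ and $\yoosmisfdis{d}$ and hence a homeomorphism onto its image.
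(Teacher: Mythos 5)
Your proposal is correct and matches the paper's treatment exactly: Theorem \ref{thm:osmotic} is stated there as a summary with no separate proof, being precisely the assembly of Proposition \ref{prop:ellisom}, Corollary \ref{lem:oswasext} together with Propositions \ref{prop:wassametop} and \ref{prop:wasisom}, and Propositions \ref{prop:cbasics} and \ref{prop:cbasicstop}, just as you organize it. Your explicit flags --- that (O2) for $\yoosmisc$ rests on the Polish hypothesis on $M$ (satisfied by $M=\yoomega$ in the application) and that the restriction to $\yowasspc{Z}{\yoccl}$ is what makes $\yowasdis{d}$ finite-valued --- are sound points of care that the paper leaves implicit.
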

\begin{proof}
The theorem for 
the construction
$\yoosmisa$
 of 
 $\ell^{1}$-products
 follows from 
 Proposition 
 \ref{prop:ellisom}. 
 Combining 
Corollaries  \ref{cor:oswasext} and 
\ref{cor:wassametop}
and 
Proposition 
\ref{prop:wasisom}, 
we obtain the 
case of 
the construction 
$\yowas$ 
of 
$1$-Wasserstein
 spaces. 
Propositions 
\ref{prop:cbasics}
and 
\ref{prop:cbasicstop}
proves the theorem 
for the construction
$\yoosmisc$ 
of spaces of measurable functions. 
\end{proof}

\section{The Whitney--Dugundji  decomposition}\label{sec:wd}
To show our main result, 
we review the 
classical 
discussion called the 
Whitney--Dugundji 
decomposition, 
which is 
a special 
partition of unity  
on the complement of 
a closed subset in consideration. 
Such a decomposition in the Euclidean case 
was 
used by Whitney 
\cite{MR1501735}
to prove the famous Whitney extension theorem. 
The general case of metric spaces 
was found by 
Dugundji 
\cite{MR0044116}. 
See also 
\cite{ MR0049543}.

For the sake of 
convenience, 
in this paper, 
we
define such a decomposition 
as a quadruplet
of open covering, 
a partition of unity, 
two family of 
points.

\begin{df}\label{df:wd}
Let 
$X$ 
be a metrizable space, 
and 
$A$
be a closed subset of 
$X$.
Fix 
$w\in \met{X}$. 
For 
$k\in \zz_{\ge 0}$ 
and 
for a metric space 
$(X, w)$, 
we say that 
a 
quadruplet
\[
\yowdwd=
[
\yowdc,  
\{\yowdpe{O}\}_{O\in \yowdc},
\{\yowdae{O}\}_{O\in \yowdc},
\{\yowdge{O}\}_{O\in \yowdc}
]
\]
is 
a
\yoemph{$(k, w)$-WD collection}
if 
the following conditions are 
satisfied:
\begin{enumerate}[label=\textup{(WD\arabic*)}, series=wditems, leftmargin=*]
\item\label{item:wdbasic}
the family 
$\yowdc$
 is a locally finite 
 open covering of 
$X\setminus A$
consisting of open sets of 
$X\setminus A$, 
each 
$\yowdpe{O}$ 
is a point in 
$X\setminus A$, 
each 
$\yowdae{O}$ 
is a point in 
$A$, 
and 
each 
$\yowdge{O}$ 
is a continuous function
from 
$X$ 
to 
$[0, 1]$; 
\item\label{item:wdopenball}
For every 
 $O\in \yowdc$, 
we have 
\[
O\subset U\left(\yowdpe{O}, \frac{w(\yowdpe{O}, A)}{4^{k+1}}; w\right);
\] 
\item\label{item:wdapoint}
For every
 $O\in \yowdc$, 
we have 
\[
w(\yowdpe{O}, \yowdae{O})\le \frac{4^{k+1}+1}{4^{k+1}}w(\yowdpe{O}, A); 
\]
\item\label{item:wdsupp}
For every 
$O\in \yowdc$, 
we have 
\[
\supp (\yowdge{O})\yosub O, 
\]
where 
$\supp (\yowdge{O})$
stands for the support of 
$\yowdge{O}$ 
defined as 
the set 
$\{\, x\in X\mid \yowdge{O}(x)\neq 0\, \}$;

\item\label{item:wdpunity}
The family 
$\{\yowdge{O}\}_{O\in \yowdc}$
satisfies 
\[
\sum_{O\in \yowdc}\yowdge{O}(x)
=
\begin{cases}
0 & \text{if $x\in A$;}\\
1 & \text{if  $x\in X\setminus A$}. 
\end{cases}
\]
\end{enumerate}
In addition, 
if the quadruplet  satisfies the 
following condition, 
then 
it is called a 
\yoemph{strong $(k, w)$-WD collection}:
\begin{enumerate}[label=\textup{(WD\arabic*)}, resume*=wditems, leftmargin=*]
\item\label{item:wdstrong}
if 
$x\in X\setminus A$
 and 
$O\in \yowdc$ 
satisfy 
$\yowdge{O}(x)>0$, 
then 
\[
\yodiam_{w}(O)\le 16\cdot \frac{w(x, A)}{4^{k+1}}. 
\]
\end{enumerate}
\end{df}

Before proving the 
existence of 
strong 
WD collections, 
we provide
a convenient estimation 
of distances 
related to 
WD collections. 
\begin{lem}\label{lem:wd1}
Let 
$X$ 
be a metrizable space, 
and 
$A$ be a closed subset of 
$X$. 
Fix 
$w\in \met{X}$ 
and 
$k\in \zz_{\ge 0}$. 
Assume that 
a quadruplet
$\yowdwd=
[
\yowdc,  
\{\yowdpe{O}\}_{O\in \yowdc},
\{\yowdae{O}\}_{O\in \yowdc},
\{\yowdge{O}\}_{O\in \yowdc}
]$
is 
a 
$(k, w)$-WD collection. 
Then, 
for every 
$a\in A$ 
and 
for every 
$x\in X$, 
if 
$\yowdge{O}(x)>0$, 
then we have 
\[
w(a, \yowdae{O})\le 4 w(a, x). 
\]
\end{lem}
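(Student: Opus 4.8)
The plan is to reduce everything to the triangle inequality once we know that $x$ lies in the open set $O$. First I would note that the hypothesis $\yowdge{O}(x)>0$ together with condition \ref{item:wdsupp} forces $x\in \supp(\yowdge{O})\yosub O$, so in particular $x\in O$; and since $O\yosub X\setminus A$ by \ref{item:wdbasic}, also $x\notin A$. Condition \ref{item:wdopenball} then yields $w(x,\yowdpe{O})<w(\yowdpe{O},A)/4^{k+1}$. To lighten notation I would set $r=w(\yowdpe{O},A)$ and $N=4^{k+1}$, noting that $N\ge 4$ because $k\ge 0$.

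The key step, and the only one requiring any idea, is to bound the ``absolute'' quantity $r=w(\yowdpe{O},A)$ by the ``relative'' quantity $w(a,x)$. Since $a\in A$ we have $r\le w(\yowdpe{O},a)$, and by the triangle inequality together with the estimate above, $w(\yowdpe{O},a)\le w(\yowdpe{O},x)+w(x,a)<r/N+w(a,x)$. Solving the resulting self-referential inequality $r<r/N+w(a,x)$ for $r$ gives $r\le \tfrac{N}{N-1}\,w(a,x)$.

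With this in hand the conclusion follows from a direct computation. Applying the triangle inequality,
\[
w(a,\yowdae{O})\le w(a,x)+w(x,\yowdpe{O})+w(\yowdpe{O},\yowdae{O}),
\]
I would bound the second summand by $r/N$ using \ref{item:wdopenball} and the third by $\tfrac{N+1}{N}\,r$ using \ref{item:wdapoint} (since $\tfrac{4^{k+1}+1}{4^{k+1}}=\tfrac{N+1}{N}$). Collecting terms gives $w(a,\yowdae{O})\le w(a,x)+\tfrac{N+2}{N}\,r$, and substituting $r\le \tfrac{N}{N-1}\,w(a,x)$ produces $w(a,\yowdae{O})\le \tfrac{2N+1}{N-1}\,w(a,x)$. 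Since $N=4^{k+1}\ge 4$ and $\tfrac{2N+1}{N-1}$ is decreasing in $N$, we have $\tfrac{2N+1}{N-1}\le 3\le 4$, which is the desired bound.

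The computation is entirely routine; the only genuine obstacle is recognizing that one must first close the loop on $r$ via the estimate $r<r/N+w(a,x)$, after which the role of the center $\yowdpe{O}$ disappears and the stated constant $4$ (in fact $3$) emerges with room to spare. I would expect no further difficulty, as the argument uses only \ref{item:wdbasic}, \ref{item:wdopenball}, \ref{item:wdapoint}, and \ref{item:wdsupp}.
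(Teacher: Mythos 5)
Your proof is correct and takes essentially the same route as the paper's: both first use \ref{item:wdsupp} and \ref{item:wdopenball} to get $w(x,\yowdpe{O})\le w(\yowdpe{O},A)/4^{k+1}$, then close the self-referential estimate to obtain $w(\yowdpe{O},A)\le \frac{4^{k+1}}{4^{k+1}-1}\,w(a,x)$, and finally combine this with the triangle inequality through $\yowdpe{O}$ and \ref{item:wdapoint}. Your observation that the true constant is $3$ (for $N=4^{k+1}\ge 4$) matches the paper's computation, which bounds $\frac{2+4^{-k-1}}{1-4^{-k-1}}$ a bit more loosely by $4$.
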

%
\begin{proof}
Our proof is based on 
that of 
\cite[Statement 2.3]{MR0049543}. 
Since 
$\yowdge{O}(x)>0$, 
the conditions 
\ref{item:wdsupp}
and 
\ref{item:wdopenball}
implies 
that 
$x\in O$
 and 
\[
O\subset U\left(\yowdpe{O}, \frac{w(\yowdpe{O}, A)}{4^{k+1}}\right). 
\] 
Thus we have
\[
w(x, \yowdpe{O})\le \frac{w(\yowdpe{O}, A)}{4^{k+1}}. 
\]
Using this estimation
and 
the condition 
\ref{item:wdapoint}, 
we also have 
\begin{align*}
w(a, \yowdae{O})
&\le 
w(a, x)+w(x, \yowdpe{O})+w(\yowdpe{O}, \yowdae{O})
\\
&\le
w(a, x)+\frac{1}{4^{k+1}}w(\yowdpe{O}, A)+
\frac{1+4^{k+1}}{4^{k+1}}w(\yowdpe{O}, A)\\
&=w(a, x)+\frac{4^{k+1}+2}{4^{k+1}}w(\yowdpe{O}, A). 
\end{align*}
Namely, 
\begin{align}
w(a, \yowdae{O})\le w(a, x)+\frac{4^{k+1}+2}{4^{k+1}}w(\yowdpe{O}, A).\label{al:wd1}
\end{align}
Moreover, 
by 
\[
w(\yowdpe{O}, A)\le 
w(\yowdpe{O}, a)\le 
w(\yowdpe{O}, x)+w(x, a)
\le \frac{1}{4^{k+1}}w(\yowdpe{O}, A)+w(x, a), 
\]
we obtain
\[
\left(1-\frac{1}{4^{k+1}}\right)w(\yowdpe{O}, A)
\le w(x, a), 
\]
and hence, we also obtain 
\begin{align}
w(\yowdpe{O}, A)\le \frac{4^{k+1}}{4^{k+1}-1}w(x, a).\label{al:wd2}
\end{align}

Therefore, 
combining these inequalities
\eqref{al:wd1}
and 
\eqref{al:wd2}, 
we confirm the following computations. 
\begin{align*}
&w(a, \yowdae{O})\le w(a, x)+
\frac{4^{k+1}+2}{4^{k+1}}w(\yowdpe{O}, A)
\le w(a, x)+\frac{4^{k+1}+2}{4^{k+1}-1}w(a, x)\\
&=\frac{2\cdot 4^{k+1}+1}{4^{k+1}-1}w(a, x)
=\frac{2+4^{-k-1}}{1-4^{-k-1}}w(x, a)
\le \frac{2+1}{3/4}w(x, a)
\le 4w(x, a). 
\end{align*}
This finishes the proof. 
\end{proof}

Now we prove 
the 
existence of 
$(k, w)$-WD 
collections. 
\begin{thm}\label{thm:swd}
Let 
$X$ 
be a metrizable space, 
and 
$A$ be a 
non-empty closed subset of 
$X$. 
Fix 
$w\in \met{X}$
and 
$k\in \zz_{\ge 0}$. 
Then there exists a 
strong 
$(k, w)$-WD 
collection. 
\end{thm}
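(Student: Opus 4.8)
The plan is to build $\yowdwd$ from a locally finite refinement of a suitably scaled cover of $X\setminus A$ by balls, together with a subordinate partition of unity extended by zero across $A$. Write $N=4^{k+1}$ for brevity. Since $A$ is closed, $w(x,A)>0$ for every $x\in X\setminus A$, so the open balls $U(x,w(x,A)/N;w)$, indexed by $x\in X\setminus A$, form an open cover of $X\setminus A$. As $X\setminus A$ is an open subspace of a metrizable space it is itself metrizable, hence paracompact, so this cover admits a locally finite open refinement $\yowdc$, each member of which is open in $X\setminus A$ and therefore open in $X$. For each $O\in\yowdc$ I would use the refinement property to select a center $\yowdpe{O}\in X\setminus A$ with $O\yosub U(\yowdpe{O},w(\yowdpe{O},A)/N;w)$, which is precisely \ref{item:wdopenball}. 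The basepoint $\yowdae{O}\in A$ is then chosen, using only the definition of the infimum $w(\yowdpe{O},A)$, so that $w(\yowdpe{O},\yowdae{O})\le\frac{N+1}{N}w(\yowdpe{O},A)$; this is \ref{item:wdapoint}, and it is always possible because $w(\yowdpe{O},A)>0$.

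Next I would produce the functions $\yowdge{O}$. Since $X\setminus A$ is paracompact Hausdorff and $\yowdc$ is a locally finite open cover, there is a partition of unity $\{\psi_{O}\}_{O\in\yowdc}$ subordinate to $\yowdc$ on $X\setminus A$, that is, $\{\,\psi_{O}\neq 0\,\}\yosub O$ and $\sum_{O}\psi_{O}\equiv 1$ on $X\setminus A$. I would then define $\yowdge{O}$ to equal $\psi_{O}$ on $X\setminus A$ and to equal $0$ on $A$. Conditions \ref{item:wdsupp} and \ref{item:wdpunity} are then immediate from the corresponding properties of $\psi_{O}$ together with $\yowdge{O}|_{A}=0$. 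The only point of \ref{item:wdbasic} that requires genuine work is the continuity of each $\yowdge{O}$ on the whole of $X$, which is the main obstacle in the argument.

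The observation that resolves both the boundary continuity and the strong estimate is that each $O$ lies a definite distance from $A$: for $x\in O\yosub U(\yowdpe{O},w(\yowdpe{O},A)/N;w)$ one has $w(x,A)\ge w(\yowdpe{O},A)-w(x,\yowdpe{O})\ge\frac{N-1}{N}w(\yowdpe{O},A)>0$. Hence $\yowdge{O}$ is continuous on the open set $X\setminus A$ by restriction, and at any $a\in A$ the ball $U(a,\frac{N-1}{N}w(\yowdpe{O},A);w)$ is disjoint from $O$ (a point $y\in O$ satisfies $w(y,a)\ge w(y,A)\ge\frac{N-1}{N}w(\yowdpe{O},A)$), so $\yowdge{O}$ vanishes identically on that ball and is continuous at $a$; thus $\yowdge{O}$ is continuous on $X$. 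Finally, for \ref{item:wdstrong}, if $\yowdge{O}(x)>0$ with $x\in X\setminus A$, then $x\in O$ by \ref{item:wdsupp}, so $\yodiam_{w}(O)\le\frac{2}{N}w(\yowdpe{O},A)$; combining with $w(\yowdpe{O},A)\le\frac{N}{N-1}w(x,A)$, obtained by rearranging the displayed lower bound, gives $\yodiam_{w}(O)\le\frac{2}{N-1}w(x,A)\le 16\cdot\frac{w(x,A)}{N}$, since $N=4^{k+1}\ge 4$. So \ref{item:wdstrong} holds with room to spare, and the essential difficulty is really the boundary continuity, which the distance-from-$A$ lower bound settles.
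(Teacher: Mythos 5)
Your proof is correct, but it takes a genuinely more direct route than the paper's. The paper first slices $X\setminus A$ into dyadic annuli $V_{i}=\{\,x\in X\setminus A\mid 2^{i-1}<w(x,A)<2^{i+1}\,\}$, covers each $V_{i}$ by the sets $V_{i}\cap U(x, w(x,A)4^{-k-1}; w)$, and verifies the strong condition \ref{item:wdstrong} by level bookkeeping: if $O\yosub V_{i}$ and $\yowdpe{O}\in V_{j}$, then $|i-j|\le 1$, whence $\yodiam_{w}(O)\le 2\cdot 4^{-k-1}\cdot 2^{j+1}\le 16\cdot 4^{-k-1}w(x,A)$ --- this is exactly where the constant $16$ comes from. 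You dispense with the annuli entirely by observing that \ref{item:wdstrong} is already a consequence of \ref{item:wdopenball} and \ref{item:wdsupp} alone: writing $N=4^{k+1}$, since $w(\cdot,A)$ is $1$-Lipschitz, any $x\in O\yosub U(\yowdpe{O}, w(\yowdpe{O},A)/N; w)$ satisfies $w(x,A)\ge \frac{N-1}{N}w(\yowdpe{O},A)$, hence $\yodiam_{w}(O)\le \frac{2}{N}w(\yowdpe{O},A)\le \frac{2}{N-1}w(x,A)$, which is comfortably below the required $\frac{16}{N}w(x,A)$ for $N\ge 4$; so a locally finite refinement of the plain ball cover suffices (legitimate, since $X\setminus A$ is metrizable, hence paracompact, and each ball $U(x, w(x,A)/N; w)$ is automatically contained in $X\setminus A$). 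What the two approaches buy: the paper's annulus decomposition is the classical Whitney-style argument and pins the constant $16$ via dyadic levels, while your version is shorter, yields a sharper diameter bound, and makes transparent that the strength of the collection costs nothing extra. Two further points in your favor: you explicitly verify continuity of the zero-extension of each $\yowdge{O}$ at points of $A$ --- the same distance-from-$A$ lower bound shows $\yowdge{O}$ vanishes on a ball about each $a\in A$ --- a step the paper's proof leaves implicit; and by indexing the partition of unity directly by the members of $\yowdc$ you avoid the paper's maneuver of merging functions with equal supports, which is harmless because Definition \ref{df:wd} only demands $\supp(\yowdge{O})\yosub O$, not equality.
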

%
\begin{proof}
For each 
$i\in \zz$, 
we define an open set 
$V_{i}$ 
 of 
$X\setminus A$
by 
\[
V_{i}=\{\,x\in X\setminus A\mid 2^{i-1}< w(x, A)<2^{i+1} \, \}. 
\]
Note that 
if 
$2\le |i-j|$, 
then 
$V_{i}\cap V_{j}=\emptyset$. 
Put 
\[
\mathcal{V}_{i}=\{\, V_{i}\cap U\left(x, w(x, A)4^{-k-1}; w\right)\mid x\in V_{i}\, \}
\]
Then 
each 
$\mathcal{V}_{i}$ 
is 
an
open covering of
 $V_{i}$. 
Put 
$\mathcal{W}=\bigcup_{i\in \zz_{\ge 0}}\mathcal{V}_{i}$. 
Since 
$X$ 
is paracompact (see
\cite[Corollary 1]{MR0026802} 
and 
\cite{MR0236876}), 
we can take a 
locally finite 
partition of unity 
$\{\psi_{a}\}_{a\in I}$
subordinated to 
$\mathcal{W}$
(see 
\cite[Proposition 2]{MR0056905}). 
By taking finite sums if necessary, 
we may assume that 
for every distinct pair  $a, b\in I$, 
 we have 
$\supp(\psi_{a})\neq \supp(\psi_{b})$. 
Put 
$\yowdc=\{\, \supp(\psi_{a})\mid a \in I\, \}$ 
and 
$\yowdge{O}=\psi_{a}$, 
where 
$\psi_{a}$ 
is a unique 
member  such that 
$\supp(\psi_{a})=O$. 
Then, 
the family 
$\yowdc$
 is a locally finite open covering of 
$X\setminus A$, 
and it refines 
$\mathcal{W}$. 

For each 
$O\in \yowdc$, 
choose 
$\yowdpe{O}\in X\setminus A$
so that 
\[
O\subset U(\yowdpe{O}, w(\yowdpe{O}, 4^{-k-1})).
\]
Note that 
if
 $O\yosub V_{i}$, 
then there exists 
$j\in \zz$ 
such that 
$|i-j|\le 1$ 
and 
$\yowdpe{O}\in V_{j}$. 
We also choose 
$\yowdae{O}\in A$
 so that 
we have 
\[
w(\yowdpe{O}, \yowdae{O})\le \frac{4^{k+1}+1}{4^{k+1}}w(\yowdpe{O}, A).  
\]

In this setting, 
if 
$\yowdge{O}(x)>0$
and 
$O\in \mathcal{W}_{i}$, 
then 
there exists 
$j\in \zz$
 with 
$|i-j|\le 1$
such that 
we have 
$x\in V_{j}\cap U(\yowdpe{O}, w(\yowdpe{O}, A)4^{-k-1}; w)$
and 
$\yowdpe{O}\in V_{j}$. 
By the construction, 
we observe that
the quadruplet
\[
[
\yowdc,  
\{\yowdpe{O}\}_{O\in \yowdc},
\{\yowdae{O}\}_{O\in \yowdc},
\{\yowdge{O}\}_{O\in \yowdc}
]
\]
is 
a
$(k, w)$-WD 
collection.

We next 
verify  that 
the quadruplet satisfies
the condition 
\ref{item:wdstrong}. 
Assume that 
$x\in X\setminus A$ 
and 
$O\in \yowdc$ 
satisfy 
$\yowdge{O}(x)>0$. 
Take 
$i\in \zz$
such that 
$O\in V_{i}$, 
and take 
$j\in \zz$
such that
$\yowdpe{O}\in V_{j}$. 
Then 
we have 
$|i-j|\le 1$. 
Under this setting, we obtain 
\begin{align*}
&\yodiam_{w}O\le 2\cdot 4^{-k-1}w(\yowdpe{O}, A)
\le 
2\cdot 4^{-k-1}\cdot 2^{j+1}
\le 2\cdot 4^{-k-1}\cdot 2^{i+2}\\
&
=2^{4}\cdot 4^{-k-1}2^{i-1}\le 2^{4}\cdot 4^{-k-1}w(x, A)
= 16\cdot w(x, A)/4^{k+1}
\end{align*}
This means that 
the quadruplet is 
a strong 
$(k, w)$-WD collection. 
\end{proof}



\section{Proof of the main result}\label{sec:proof}
The whole of this section is devoted to 
the proof of 
Theorem
 \ref{thm:main1}.

Throughout this section, 
let 
$X$ 
be a metrizable space, 
and 
$A$ 
be a closed subset of 
$X$.

First 
we fix 
$\yoccl\in \yoslamet{A}$, 
and 
take 
$m\in \met{A}$
such that 
$\yoccl=\yocclq{m}$. 
Fix 
$w\in \met{X}$
with 
$w|_{A^{2}}=m$. 
If 
$X$ 
is completely metrizable, 
we choose 
$m$ 
as a complete metric, 
which is guaranteed   by 
\cite[Theorem 1.4]{Ishiki2024smbaire}, 
and we also choose 
$w$
as 
a complete metric on 
$X$ 
(this is a variant of 
Hausdorf's metric extension theorem.  
see 
\cite{MR0024609}, 
 \cite{MR0230285}, 
 and 
 \cite{MR321026}).

For each 
$s\in \yoomega$, 
fix a
strong 
 $(s, w)$-WD collection
\[
\yowdwd_{s}=
[
\yowdce{s},  
\{\yowdpe{O, s}\}_{O\in \yowdce{s}},
\{\yowdae{O, s}\}_{O\in \yowdce{s}},
\{\yowdge{O, s}\}_{O\in \yowdce{s}}
]
\]
with respect to 
$X$ 
and 
$A$
(see Lemma \ref{lem:wd1}).

Using 
a partition of unity
(with respect to 
``$\sum$''), 
we shall 
construct a partition of unity 
with respect to 
``$\sup$''. 
\begin{lem}\label{lem:sigma}
For each 
$s\in \yoomega$, 
there exists a family 
$\{\yowdse{O, s}\}_{O\in \yowdce{s}}$
such that 
\begin{enumerate}[label=\textup{(\arabic*)}]

\item 
each 
$\yowdse{O, s}$
is a continuous function from 
$X$ 
to 
$[0, 1]$;

\item 
we have 
$\supp(\yowdse{O, s})=\supp(\yowdge{O, s})$
for all 
$O\in \yowdce{s}$; 

\item 
for every 
$x\in X$, 
there exists
 $O\in \yowdce{s}$
such that 
$\yowdse{O, s}(x)=1$. 
\end{enumerate}
\end{lem}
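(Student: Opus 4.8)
The plan is to normalize the given $\sum$-partition of unity $\{\yowdge{O, s}\}_{O \in \yowdce{s}}$ by its pointwise maximum, so that the largest summand is rescaled to $1$ at each point of $X \setminus A$. Fix $s \in \yoomega$. For $x \in X \setminus A$, the local finiteness of $\yowdce{s}$ guarantees that only finitely many $\yowdge{O, s}(x)$ are nonzero, and since $\sum_{O} \yowdge{O, s}(x) = 1$ by \ref{item:wdpunity}, the quantity
\[
\Theta_{s}(x) = \max\{\, \yowdge{O, s}(x) \mid O \in \yowdce{s}\, \}
\]
is a strictly positive real that is attained by at least one $O$. I would then define $\yowdse{O, s}(x) = \yowdge{O, s}(x)/\Theta_{s}(x)$ for $x \in X \setminus A$ and $\yowdse{O, s}(x) = 0$ for $x \in A$.

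The main obstacle is the continuity of $\yowdse{O, s}$ on all of $X$, and in particular across the frontier of $A$, since there both $\yowdge{O, s}$ and $\Theta_{s}$ tend to $0$ and the quotient is a priori indeterminate. The key observation that resolves this is that each cell $O$ is bounded away from $A$: by \ref{item:wdopenball}, every $x \in O$ satisfies $w(x, \yowdpe{O, s}) < w(\yowdpe{O, s}, A)/4^{s+1}$, whence $w(x, A) \ge (1 - 4^{-s-1})\, w(\yowdpe{O, s}, A) > 0$, and by continuity of $w(\cdot, A)$ this lower bound persists on $\overline{O}$, so $\overline{O} \cap A = \emptyset$. Consequently, for a fixed $O$ and any $a \in A$, there is a neighborhood of $a$ on which $\yowdge{O, s}$, and hence $\yowdse{O, s}$, vanishes identically, which makes $\yowdse{O, s}$ continuous at every point of $A$. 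On the open set $X \setminus A$, local finiteness lets me compute $\Theta_{s}$ locally as the maximum of finitely many continuous functions, so $\Theta_{s}$ is continuous and positive there and $\yowdse{O, s}$ is a continuous quotient. Since $X \setminus A$ is open, continuity there together with continuity at each point of $A$ yields continuity on all of $X$.

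With the definition in hand, the three required properties follow readily. Condition (1) holds since $0 \le \yowdge{O, s}(x) \le \Theta_{s}(x)$ gives $\yowdse{O, s}(x) \in [0, 1]$. Condition (2) holds because $\Theta_{s}(x) > 0$ on $X \setminus A$ forces $\yowdse{O, s}(x) = 0 \iff \yowdge{O, s}(x) = 0$, while both functions vanish on $A$, so $\supp(\yowdse{O, s}) = \supp(\yowdge{O, s})$. For condition (3), at each $x \in X \setminus A$ any index $O$ attaining the maximum $\Theta_{s}(x) = \yowdge{O, s}(x)$ satisfies $\yowdse{O, s}(x) = 1$; since every $\yowdge{O, s}$, and hence every $\yowdse{O, s}$, vanishes on $A$ by \ref{item:wdpunity}, this $\sup$-normalization is to be understood on $X \setminus A$, which is precisely the locus where the original $\sum$-partition of unity is nontrivial.
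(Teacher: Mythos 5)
Your proposal is correct and follows essentially the same route as the paper: both constructions normalize the $\sum$-partition of unity by its pointwise supremum $\Phi(x)=\sup_{O\in \yowdce{s}}\yowdge{O, s}(x)$ on $X\setminus A$, the paper merely using the clamped formula $\frac{2}{\Phi(x)}\min\left\{\yowdge{O, s}(x), \frac{\Phi(x)}{2}\right\}$ in place of your bare quotient $\yowdge{O, s}(x)/\Phi(x)$, a cosmetic difference that affects none of the three required properties. Your extra verifications --- that each $O$ is bounded away from $A$ by \ref{item:wdopenball}, so each $\yowdse{O, s}$ vanishes on a neighborhood of every point of $A$ and is therefore continuous there, and that condition (3) must be read on $X\setminus A$ since all the functions vanish on $A$ --- are precisely the details that the paper's one-line proof leaves implicit, and you supply them correctly.
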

%
\begin{proof}
Define 
$\Phi\colon X\setminus A\to (0, 1]$
by 
$\Phi(x)=\sup_{O\in \yowdce{s}}\yowdge{O, s}(x)$. 
We also define 
$\yowdse{O, s}\colon X\to [0, 1]$
by 
\[
\yowdse{O, s}(x)=\frac{2}{\Phi(x)}\cdot \min
\left\{
\yowdge{O, s}(x), \frac{\Phi(x)}{2}
\right\}. 
\]
Then it is a map as required. 
\end{proof}

In what follows, 
we fix a family 
 $\{\yowdse{O, s}\}_{O\in \yowdce{s}}$
 stated in 
 Lemma 
 \ref{lem:sigma}.

Consider 
$\coprod_{s\in \yoomega}\yowdce{s}\times \{s\}$, 
and put 
$S=\yofmosp{\coprod_{s\in \yoomega}\yowdce{s}\times \{s\}}$
(see Subsection 
\ref{subsec:finsp}). 
Recall that 
$\yofmodisn$
 stands for the   
supremum metric on this space 
$S=\yofmosp{\coprod_{s\in \yoomega}\yowdce{s}\times \{s\}}$. 
Namely, 
the space 
$(S, \yofmodisn)$
 is a space of 
families 
$\{f(O, s)\}_{(O, s)\in \coprod_{s\in \yoomega}\yowdce{s}\times \{s\}}$
 of  real numbers 
indexed by 
the set 
$\coprod_{s\in \yoomega}\yowdce{s}\times \{s\}$. 
We denote by 
$0\in S$
the constant map taking the 
value 
$0\in \rr$.

For each 
$s\in \yoomega$,
fix  a continuous  map 
$\yocoeff{s}\colon [0, \infty)\to [0, 1]$ 
such that 
$\yocoeff{s}([0, 2^{-s}])=\{0\}$, 
$\yocoeff{s}((2^{-s}, 2^{-s+1}))\yosub (0, 1)$, 
and 
$\yocoeff{s}([2^{-s+1}, \infty))=\{1\}$. 
We also define
 $\yocoefff{s}\colon X\to [0, 1]$ by 
\[
\yocoefff{s}(x)=\yocoeff{s}(w(x, A)). 
\]
For 
$x\in X$, 
and
$s\in \yoomega$, 
we 
define 
a map 
$\yohsmapr(x, s)\colon \coprod_{s\in \yoomega}\yowdce{s}\times \{s\}\to \rr$
 by 
\[
\yohsmapr(x, s)(O, i)=
\begin{cases}
\yocoefff{s}(x)\cdot \yowdse{O, s}(x) & \text{if $s=i$;}\\ 
0 & \text{if $s\neq i$.}
\end{cases}
\]
Note that 
each 
$\yohsmapr(x, s)$
belongs to 
$S$. 
Let us see basic properties  of
 $\yohsmapr(x, s)$.

\begin{lem}\label{lem:hrmap}
The following statement are true:
\begin{enumerate}[label=\textup{(\arabic*)}]

\item\label{item:hrconti}
Fix $s\in \yoomega$. 
Then the map 
$\yohsmapr_{s}\colon X\to S$ 
defined by 
$x\mapsto \yohsmapr(x, s)$
is continuous. 

\item\label{item:hrle1}
If 
$x\in X$ 
and 
$N\in \yoomega$
satisfy
$\yofmodisn(\yohsmapr(x, N), 0)<1$, 
 then 
 we have 
 $w(x, A)\le 2^{-N+1}$. 
 
 \item\label{item:hrle12}
If 
$x, y\in X$ 
and 
$N\in \yoomega$
satisfy
$\yocoefff{N}(y)=\yocoefff{N}(y)=1$
and 
$\yofmodisn(\yohsmapr(x, N), \yohsmapr(y, N))<1$, 
 then 
there exists 
$O\in \yowdce{N}$ such that 
$\yowdse{O, N}(x)>0$
and 
$\yowdse{O, N}(y)>0$. 
In particular, 
we have 
$x, y\in O$, 
and we also obtain the inequality 
\[
w(x, y)\le 16\cdot 4^{-N-1}w(x, A).
\] 
 
 \end{enumerate}
\end{lem}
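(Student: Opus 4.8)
The plan is to treat the three statements separately, since each reduces quickly to the combinatorial structure of the WD collection together with the cutoff functions $\yocoefff{s}$. Throughout I will use that $\yohsmapr(x, s)$ is supported on the single slice $i=s$, so that $\yohsmapr(x, s)(O, s)=\yocoefff{s}(x)\cdot \yowdse{O, s}(x)$ for $O\in \yowdce{s}$ and vanishes elsewhere.

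For \ref{item:hrconti} I would argue locally, distinguishing whether the base point lies in $A$ or not. Near a point $x_{0}\in A$, continuity of $w(\cdot, A)$ together with $w(x_{0}, A)=0$ yields a neighborhood $U$ on which $w(x, A)\le 2^{-s}$; by the defining property $\yocoeff{s}([0, 2^{-s}])=\{0\}$, the factor $\yocoefff{s}$ vanishes on $U$, so $\yohsmapr_{s}$ is identically $0$ there and hence continuous at $x_{0}$. Near a point $x_{0}\in X\setminus A$, I would use that $\yowdce{s}$ is locally finite (condition \ref{item:wdbasic}) to choose a neighborhood $U\yosub X\setminus A$ meeting only finitely many supports $\supp(\yowdge{O, s})=\supp(\yowdse{O, s})$. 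On $U$ all but finitely many coordinates of $\yohsmapr_{s}$ are $0$, so $\yohsmapr_{s}|_{U}$ factors through these finitely many coordinates, on which $\yofmodisn$ is the maximum of finitely many coordinate differences; since each coordinate $x\mapsto \yocoefff{s}(x)\yowdse{O, s}(x)$ is continuous, $\yohsmapr_{s}$ is continuous on $U$. The same local finiteness shows $\yohsmapr_{s}(x)\in S$, its support being finite.

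For \ref{item:hrle1} and \ref{item:hrle12}, the key leverage is Lemma \ref{lem:sigma}: for $x\in X\setminus A$ there is $O_{x}\in \yowdce{N}$ with $\yowdse{O_{x}, N}(x)=1$, so $\yohsmapr(x, N)(O_{x}, N)=\yocoefff{N}(x)$. In \ref{item:hrle1}, I first dispose of the trivial case $x\in A$, where $w(x, A)=0$; for $x\in X\setminus A$ the hypothesis $\yofmodisn(\yohsmapr(x, N), 0)<1$ forces $\yocoefff{N}(x)<1$, and since $\yocoeff{N}$ is identically $1$ on $[2^{-N+1}, \infty)$ this yields $w(x, A)<2^{-N+1}$. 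In \ref{item:hrle12}, the assumptions $\yocoefff{N}(x)=\yocoefff{N}(y)=1$ already force $x, y\in X\setminus A$ (otherwise $\yocoefff{N}$ would vanish); evaluating the coordinate $(O_{x}, N)$ gives $\yohsmapr(x, N)(O_{x}, N)=1$, so the bound $\yofmodisn(\yohsmapr(x, N), \yohsmapr(y, N))<1$ reads $|1-\yowdse{O_{x}, N}(y)|<1$, whence $\yowdse{O_{x}, N}(y)>0$. Thus $O:=O_{x}$ satisfies $\yowdse{O, N}(x), \yowdse{O, N}(y)>0$, and since $\supp(\yowdse{O, N})=\supp(\yowdge{O, N})\yosub O$ we get $x, y\in O$. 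Finally, $\yowdge{O, N}(x)>0$ lets me invoke the strong WD condition \ref{item:wdstrong} (with $k=N$) to bound $\yodiam_{w}(O)\le 16\cdot 4^{-N-1}w(x, A)$, and $x, y\in O$ gives $w(x, y)\le \yodiam_{w}(O)$, as claimed.

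I expect the only real obstacle to be \ref{item:hrconti}, specifically the interaction between the two regimes: local finiteness of $\yowdce{s}$ controls the map away from $A$, but degenerates as one approaches $A$, where instead the cutoff $\yocoefff{s}$ must be relied on to annihilate $\yohsmapr_{s}$. The remaining two statements are essentially bookkeeping once the normalization $\yowdse{O_{x}, N}(x)=1$ from Lemma \ref{lem:sigma} and the diameter estimate \ref{item:wdstrong} are in hand.
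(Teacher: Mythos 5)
Your proof is correct and takes essentially the same route as the paper's: the normalization $\yowdse{O_{x}, N}(x)=1$ from Lemma \ref{lem:sigma}, the defining properties of the cutoff $\yocoefff{N}$, and the strong condition \ref{item:wdstrong} for the diameter bound. Your local-finiteness/cutoff argument for \ref{item:hrconti} simply fills in details the paper dispatches in one sentence, and your observation that the hypotheses of \ref{item:hrle12} force $x, y\in X\setminus A$ corrects a typo in the paper's own proof.
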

%
\begin{proof}
Statement 
\ref{item:hrconti}
follows from 
the fact that 
each 
$\yowdse{O, s}$ 
is continuous. 
We shall show 
\ref{item:hrle1}. 
If 
$x\in A$, 
we have 
$w(x, A)=0< 2^{-N+1}$. 
If 
$x\in X\setminus A$, 
then  the assumption
implies that
$\max_{O\in \yowdce{N}}\yocoefff{N}(x)\yowdse{O, N}(x)<1$. 
Take 
$P\in \yowdce{N}$
 with 
$\yowdse{P, N}(x)=1$
(see
 Lemma 
 \ref{lem:sigma}). 
Thus we have 
$\yocoefff{N}(x)<1$. 
By the definition of 
$\yocoefff{N}$, 
we 
conclude that 
$w(x, A)\le 2^{-N+1}$. 
Now we show 
\ref{item:hrle12}. 
In this setting, 
we see that 
$x, y\in A$
 and 
\[
\sup_{x\in X}|\yowdse{O, N}(x)-\yowdse{O, N}(y)|<1.
\] 
Take 
$O\in \yowdce{s}$
with 
$\yowdse{O, N}(x)=1$. 
Then 
$\yowdse{O, N}(y)>0$. 
Since 
$\supp(\yowdse{O, N})=\supp(\yowdge{O, N})$, 
the conditions
\ref{item:wdsupp} 
and  
\ref{item:wdstrong}
show
 \ref{item:hrle12}. 
\end{proof}

Next, 
we 
consider 
$(\yowassp{A}, \yowasdis{m})$
and 
we represent 
\[
\yoosmisdwsp{A}=
\yoosmisasp{\yowassp{A}}
=\yoosmisasp{\yowassp{A}, S}=
\yowassp{A}\times S, 
\] 
and 
\[
\yoosmisdwdis{d}
=\yoosmisadis{\yowasdis{d}}
=\yowasdis{d}\times_{\ell^{1}} \yofmodisn. 
\]
for every 
$d\in \yoccl$. 
In the proof of 
the main theorem, 
we only  consider 
the measure 
$\yomasq$
 on
$\yoomega$ 
defined by 
\[
\yomasq=\sum_{s\in \yoomega}\frac{1}{2^{s+1}}\yodmas{s}. 
\]
Using 
the  measure space 
$(\yoomega, \yomasq)$, 
we 
construct 
the space
$\yoosmiscsp{\yoosmisdwsp{A}}$
of measurable functions
(see Subsection 
\ref{subsec:spmea}). 
Employing  the zero element 
$0\in S$, 
we also construct a topological embedding 
$\yoosmisaemb{\yowassp{A}}$
(see Subsection 
\ref{subsec:ell1}). 
Then 
we can obtain  an topological embedding 
$I\colon A\to \yoosmiscsp{\yoosmisdwsp{A}}$
as a composition 
$\yoosmiscemb{\yoosmisdwsp{A}}\circ\yoosmisaemb{\yowassp{A}}\circ \yowasemb{A}$. 
Namely, 
$I(a)$ is a constant  map 
$\yoomega\to \yoosmisdwsp{A}$
such that 
$I(a)(s)=(\yodmas{a}, 0)$, 
where 
$0\in S$. 
Now, using 
$w$, 
we will construct a 
topological embedding 
$\yosubmap\colon X\to  \yoosmiscsp{\yoosmisdwsp{A}}$
such that 
$\yosubmap|_{A}=I$.

For 
$x\in X$, 
and for 
$s\in \yoomega$
we define 
$\yohsmapl(x, s)\in \yowassp{A}$ 
by 
\[
\yohsmapl(x, s)=
\begin{cases}
\sum_{O\in \yowdce{s}}\yowdge{O, s}(x)
\cdot \yodmas{a_{O, s}} & \text{if $x\in X\setminus A$}\\
\yodmas{x} & \text{if $x\in A$}
\end{cases}
\]

\begin{lem}\label{lem:hlmap}
Fix 
$s\in \yoomega$. 
Then
the  map 
$\yohsmapl_{s}\colon X\to \yowassp{A}$
defined by 
$x\mapsto \yohsmapl(x, s)$ 
is continuous. 
\end{lem}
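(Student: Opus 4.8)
The plan is to verify continuity of $\yohsmapl_{s}$ separately at points of $X\setminus A$ and at points of $A$, working throughout with the metric $\yowasdis{m}$ on $\yowassp{A}$; by Proposition \ref{prop:wassametop} the resulting topology is independent of this choice within $\yoccl$. First I would observe that $\yohsmapl(x,s)$ always lands in $\yowassp{A}$: local finiteness of $\yowdce{s}$ forces only finitely many $\yowdge{O,s}(x)$ to be nonzero, so for $x\in X\setminus A$ the measure is a finite convex combination of Dirac masses, and for $x\in A$ it is a single Dirac mass.

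For continuity at $x_{0}\in X\setminus A$, I would use local finiteness to pick a neighborhood $U\subseteq X\setminus A$ of $x_{0}$ meeting only finitely many members $O_{1},\dots,O_{n}$ of $\yowdce{s}$. By \ref{item:wdsupp}, any $O$ with $\yowdge{O,s}(x)>0$ and $x\in U$ lies among $O_{1},\dots,O_{n}$, so \ref{item:wdpunity} gives $\sum_{i=1}^{n}\yowdge{O_{i},s}(x)=1$ for $x\in U$. Hence for $x,y\in U$, fixing any $b\in A$, Lemma \ref{lem:wasfinfin} yields
\[
\yowasdis{m}(\yohsmapl(x,s),\yohsmapl(y,s))\le \sum_{i=1}^{n}|\yowdge{O_{i},s}(x)-\yowdge{O_{i},s}(y)|\cdot m(\yowdae{O_{i},s},b),
\]
and since each $\yowdge{O_{i},s}$ is continuous and the sum is finite, the right-hand side tends to $0$ as $y\to x_{0}$.

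The hard part is continuity at a boundary point $x_{0}\in A$, where the target value is $\yodmas{x_{0}}$ while the nearby values are convex combinations of Dirac masses at the associated points $\yowdae{O,s}$, which a priori sit far from $x_{0}$. Here the decisive input is Lemma \ref{lem:wd1}. For $x\in X\setminus A$, Lemma \ref{lem:wasfin1} computes
\[
\yowasdis{m}(\yohsmapl(x,s),\yodmas{x_{0}})=\sum_{O\in\yowdce{s}}\yowdge{O,s}(x)\,w(\yowdae{O,s},x_{0}),
\]
and applying Lemma \ref{lem:wd1} with $a=x_{0}$ bounds $w(\yowdae{O,s},x_{0})\le 4\,w(x,x_{0})$ for each $O$ with $\yowdge{O,s}(x)>0$; summing against the partition of unity gives $\yowasdis{m}(\yohsmapl(x,s),\yodmas{x_{0}})\le 4\,w(x,x_{0})$. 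For $x\in A$ one has directly $\yowasdis{m}(\yodmas{x},\yodmas{x_{0}})=w(x,x_{0})$ by Corollary \ref{lem:oswasext}. In either case $\yowasdis{m}(\yohsmapl(x,s),\yohsmapl(x_{0},s))\le 4\,w(x,x_{0})\to 0$ as $x\to x_{0}$, which establishes continuity at $x_{0}$ and completes the proof.
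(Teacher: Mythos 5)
Your proof is correct and takes essentially the same approach as the paper, which disposes of this lemma in one line citing exactly your two interior-case ingredients: Lemma \ref{lem:wasfinfin} and the local finiteness of $\{\supp(\yowdge{O, s})\}_{O\in \yowdce{s}}$. Your explicit treatment of points of $A$ --- the bound $\yowasdis{m}(\yohsmapl(x, s), \yodmas{x_{0}})\le 4\, w(x, x_{0})$ via Lemma \ref{lem:wasfin1} and Lemma \ref{lem:wd1} --- spells out a boundary case that the paper's terse proof leaves implicit; the same estimate does appear later in the paper, in Case 2 of the proof of Lemma \ref{lem:jconti}, so your version is a welcome completion rather than a divergence.
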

%
\begin{proof}
The lemma 
follows from 
Lemma 
\ref{lem:wasfinfin}
and the local finiteness of 
$\{\supp(\yowdge{O, s})\}_{O\in \yowdce{s}}$. 
\end{proof}

For 
$x\in X$, 
 we put 
$\yosubmap_{s}(x)=(\yohsmapl(x, s), \yohsmapr(x, s))$, 
and 
define 
$\yosubmap(x)\colon \yoomega\to 
\yoosmisdwsp{A}=\yowassp{A}\times S$
by 
$\yosubmap(x)(s)=
\yosubmap_{s}(x)$.
Then 
$\yosubmap$
is a map from 
$X$
into 
$\yoosmiscspw{\yoosmisdwsp{A}}$.

In the next lemma, we shall see that 
$\yosubmap$
 is actually a map into 
 the space
$\yoosmiscsp{\yoosmisdwsp{A}}$.

\begin{lem}\label{lem:jbounded}
For every 
$x\in X$, 
we have 
$\yosubmap(x)\in \yoosmiscsp{\yoosmisdwsp{A}}$. 
Namely, 
for every 
$x\in X$
and for every 
$y\in A$, 
we have
\[
\yoosmiscdis{\yoosmisdwdis{m}}(\yosubmap(x), \yosubmap(y))\le 4w(x, y)+1<\infty.
\]
In particular, if 
$w$ 
is bounded, 
then so is 
$\yoosmiscdis{\yoosmisdwdis{m}}$. 
\end{lem}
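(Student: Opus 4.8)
The plan is to unwind the three nested constructions and reduce the whole estimate to the two elementary bounds already proved, namely Lemma \ref{lem:wasfin1} together with Lemma \ref{lem:wd1}. Since $M=\yoomega$ carries the measure $\yomasq=\sum_{s\in\yoomega}2^{-(s+1)}\yodmas{s}$, the integral defining $\yoosmiscdis{\yoosmisdwdis{m}}$ is just the series
\[
\yoosmiscdis{\yoosmisdwdis{m}}(\yosubmap(x), \yosubmap(y))=\sum_{s\in\yoomega}\frac{1}{2^{s+1}}\yoosmisdwdis{m}\!\left(\yosubmap_{s}(x), \yosubmap_{s}(y)\right),
\]
and by the definition of the $\ell^{1}$-product each term splits as $\yowasdis{m}(\yohsmapl(x,s), \yohsmapl(y,s))+\yofmodisn(\yohsmapr(x,s), \yohsmapr(y,s))$. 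Because $y\in A$, one has $\yohsmapl(y,s)=\yodmas{y}$ and $\yohsmapr(y,s)=0$ (the latter since $\yocoefff{s}(y)=\yocoeff{s}(0)=0$), so in fact $\yosubmap(y)=I(y)$ is the constant map with value $(\yodmas{y}, 0)$. I would therefore aim to bound each summand by $4w(x,y)+1$ \emph{uniformly in} $s$; then the factor $\sum_{s}2^{-(s+1)}=1$ yields the claim.

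For the Wasserstein part I would split into two cases. If $x\in A$, then $\yohsmapl(x,s)=\yodmas{x}$ and Corollary \ref{lem:oswasext} gives $\yowasdis{m}(\yodmas{x}, \yodmas{y})=m(x,y)=w(x,y)$. If $x\in X\setminus A$, then by condition \ref{item:wdpunity} the coefficients $\{\yowdge{O,s}(x)\}_{O\in\yowdce{s}}$ are non-negative and sum to $1$, so $\yohsmapl(x,s)$ is a finitely supported probability measure and Lemma \ref{lem:wasfin1} (applied with $p=y$) gives
\[
\yowasdis{m}\!\left(\yohsmapl(x,s), \yodmas{y}\right)=\sum_{O\in\yowdce{s}}\yowdge{O,s}(x)\,m(\yowdae{O,s}, y).
\]
Here is where Lemma \ref{lem:wd1} enters: for each $O$ contributing to the sum we have $\yowdge{O,s}(x)>0$, whence $w(y, \yowdae{O,s})\le 4w(y,x)$, and since $\yowdae{O,s}, y\in A$ this reads $m(\yowdae{O,s}, y)\le 4w(x,y)$. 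Summing against the probability weights bounds the Wasserstein term by $4w(x,y)$ in both cases.

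The remaining $\ell^{\infty}$-part is immediate: every coordinate $\yohsmapr(x,s)(O,i)$ equals either $0$ or the product $\yocoefff{s}(x)\yowdse{O,s}(x)$ of two numbers in $[0,1]$, so $\yofmodisn(\yohsmapr(x,s), 0)\le 1$. Adding the two estimates gives the uniform bound $\yoosmisdwdis{m}(\yosubmap_{s}(x), \yosubmap_{s}(y))\le 4w(x,y)+1$, and summing the geometric weights produces the stated inequality. Finiteness then places $\yosubmap(x)$ at finite $\yoosmiscdis{\yoosmisdwdis{m}}$-distance from the constant map $I(y)=\yoosmiscemb{\yoosmisdwsp{A}}((\yodmas{y}, 0))$, which is exactly the membership criterion for $\yoosmiscsp{\yoosmisdwsp{A}}=\yoosmiscspc{\yoosmisdwsp{A}}{\yoccl}$. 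For the final assertion, if $w$ is bounded by some $D$, the inequality gives $\yoosmiscdis{\yoosmisdwdis{m}}(\yosubmap(x), \yosubmap(y))\le 4D+1$ for all $x\in X$ and $y\in A$; fixing one $y_{0}\in A$ and applying the triangle inequality bounds the diameter of $\yosubmap(X)$ by $2(4D+1)$. I expect no serious obstacle here: the only points needing care are the case split forced by \ref{item:wdpunity} (the partition of unity vanishes on $A$, so the probability-measure hypothesis of Lemma \ref{lem:wasfin1} holds only off $A$) and checking that the constant $4$ supplied by Lemma \ref{lem:wd1} is genuinely independent of the index $s$, which is precisely what makes the series converge.
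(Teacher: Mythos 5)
Your proof is correct and takes essentially the same route as the paper's: bound each summand $\yoosmisdwdis{m}\left(\yosubmap_{s}(x), \yosubmap_{s}(y)\right)$ by $4w(x,y)+1$ uniformly in $s$, using the Wasserstein estimate for finitely supported measures together with Lemma \ref{lem:wd1} for the first factor and the trivial bound $\yofmodisn(\yohsmapr(x,s), 0)\le 1$ for the second, then integrate against the probability measure $\yomasq$. The only cosmetic differences are that you cite Lemma \ref{lem:wasfin1} (which even yields equality for the Wasserstein term) where the paper invokes Lemma \ref{lem:wasfinfin}, and that you spell out the case $x\in A$, the membership criterion for $\yoosmiscsp{\yoosmisdwsp{A}}$, and the triangle-inequality argument for the boundedness assertion, all of which the paper leaves implicit.
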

%
\begin{proof}
For every 
$s\in \yoomega$, 
Lemmas
\ref{lem:wasfinfin}
and 
\ref{lem:wd1}
show
 that
\begin{align*}
&\yoosmisdwdis{m}(\yosubmap_{s}(x), \yosubmap_{s}(y))
\le \sum_{O\in \yowdce{s}}\yowdge{O}(x)\cdot 
m(\yowdae{O, s}, y) +1\\
&
\le
\sum_{O\in \yowdce{s}}\yowdge{O}(x)\cdot 
4w(x, y)
+1
=
4w(x, y)\cdot \sum_{O\in \yowdce{s}}\yowdge{O}(x)\cdot 
+1\\
&
=
 4w(x, y)+1. 
\end{align*}
Thus, 
we have 
\begin{align*}
&\yoosmiscdis{\yoosmisdwdis{d}}
(\yosubmap(x), \yosubmap(y))
\le \int_{\yoomega}4w(x, y)+1\ d\yomasq
= 4w(x, y)+1<\infty. 
\end{align*}
This completes the proof. 
\end{proof}

By the help of 
Lemma 
\ref{lem:jbounded}, 
in what follows, 
we often represent 
\[
\yoosmislldwsp{A}
=\yoosmiscsp{\yoosmisdwsp{A}}, 
\]
and 
\[
\yoosmislldwdis{d}
=\yoosmiscdis{\yoosmisdwdis{d}}, 
\]
for $d\in \yoccl$.

\begin{prop}\label{prop:jextension}
For every 
$d\in \yoccl$, 
and for 
every pair 
$x, y\in A$, 
we have 
$\yosubmap|_{A}=I$. 
Moreover, we also have 
\[
\yoosmislldwdis{d}
(\yosubmap(x), \yosubmap(y))=d(x, y)
\]
\end{prop}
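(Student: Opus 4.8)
The goal is to show that for $x,y \in A$, the measurable-function distance $\yoosmislldwdis{d}(\yosubmap(x), \yosubmap(y))$ equals $d(x,y)$. The key observation is that when $x \in A$, the map $\yosubmap(x)$ simplifies drastically: the $S$-component $\yohsmapr(x,s)$ should vanish (since $w(x,A)=0$ forces $\yocoefff{s}(x)=0$), and the Wasserstein component $\yohsmapl(x,s)$ equals the Dirac mass $\yodmas{x}$ by the second case in its definition. So I expect $\yosubmap(x)(s) = (\yodmas{x}, 0)$ for every $s \in \yoomega$; that is, $\yosubmap|_A = I$, the constant-map embedding promised in the construction.

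**First steps.** I would first verify this simplification explicitly. For $x \in A$ we have $w(x,A)=0$, and since $\yocoeff{s}([0,2^{-s}]) = \{0\}$, we get $\yocoefff{s}(x) = \yocoeff{s}(0) = 0$, whence $\yohsmapr(x,s)(O,i) = 0$ for all $(O,i)$; thus the $S$-coordinate is the zero element $0 \in S$. For the Wasserstein coordinate, the definition of $\yohsmapl$ gives $\yohsmapl(x,s) = \yodmas{x}$ directly. Therefore $\yosubmap(x)(s) = (\yodmas{x}, 0)$ and likewise $\yosubmap(y)(s) = (\yodmas{y}, 0)$, independently of $s$.

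**Computing the distance.** With both functions constant in $s$, the measurable-function distance collapses to a single value of the integrand. Using the definition of $\yoosmiscdis{\cdot}$ over $(\yoomega, \yomasq)$,
\begin{align*}
\yoosmislldwdis{d}(\yosubmap(x), \yosubmap(y))
&= \int_{\yoomega} \yoosmisdwdis{d}\bigl((\yodmas{x}, 0), (\yodmas{y}, 0)\bigr)\ d\yomasq \\
&= \yoosmisdwdis{d}\bigl((\yodmas{x}, 0), (\yodmas{y}, 0)\bigr),
\end{align*}
since the integrand is constant and $\yomasq$ is a probability measure. Now I unfold the $\ell^1$-product structure: $\yoosmisdwdis{d} = \yowasdis{d} \times_{\ell^1} \yofmodisn$, so the right-hand side equals $\yowasdis{d}(\yodmas{x}, \yodmas{y}) + \yofmodisn(0, 0) = \yowasdis{d}(\yodmas{x}, \yodmas{y})$. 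Finally, Corollary \ref{lem:oswasext} (that $\yowasdis{d}(\yowasemb{A}(x), \yowasemb{A}(y)) = d(x,y)$, with $\yowasemb{A}(x) = \yodmas{x}$) gives $\yowasdis{d}(\yodmas{x}, \yodmas{y}) = d(x,y)$, completing the proof.

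**Main obstacle.** The computation is essentially a chain of definitional unfoldings, so there is no deep difficulty; the only point requiring care is confirming that the $S$-component genuinely vanishes on $A$, which hinges on the precise choice of the cutoff $\yocoeff{s}$ (specifically $\yocoeff{s}([0,2^{-s}]) = \{0\}$ and $x \in A \Rightarrow w(x,A)=0$). Once $\yosubmap|_A = I$ is established, everything reduces to the already-proven osmotic identities for the Wasserstein and $\ell^1$-product constructions, applied to Dirac masses.
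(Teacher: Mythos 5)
Your proof is correct and follows essentially the same route as the paper: the paper's one-line proof simply invokes the osmotic property of the three constructions (Theorem \ref{thm:osmotic}), which is exactly the chain you unfold explicitly --- verifying $\yosubmap|_{A}=I$ (i.e., $\yosubmap(x)(s)=(\yodmas{x},0)$ via $\yocoefff{s}(x)=0$), collapsing the integral over the probability measure $\yomasq$, splitting the $\ell^{1}$-product, and applying Corollary \ref{lem:oswasext} to the Dirac masses. Your version is a welcome expansion, since the identity $\yosubmap|_{A}=I$ that the paper takes for granted is precisely what your first step checks.
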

%
\begin{proof}
The proposition 
is deduced 
from
the fact that 
these constructions are 
osmotic
(Theorem 
\ref{thm:osmotic}). 
\end{proof}

Next,
 let us prove that 
$\yosubmap\colon X\to \yoosmislldwsp{A}$ 
is a topological embedding.

\begin{lem}\label{lem:jconti}
The map 
$\yosubmap\colon X\to \yoosmislldwsp{A}$
is continuous. 
\end{lem}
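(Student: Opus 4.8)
The plan is to use that $\yoosmislldwsp{A}$ carries the metric $\yoosmislldwdis{m}$ and that $X$ carries $w$, both of which generate the respective topologies; since the spaces are metric it suffices to prove sequential continuity. So I would fix $x_{0}\in X$ and a sequence $x_{n}\to x_{0}$ in $(X, w)$, and show $\yoosmislldwdis{m}(\yosubmap(x_{n}), \yosubmap(x_{0}))\to 0$. Unwinding the definitions of $\yoosmislldwdis{m}=\yoosmiscdis{\yoosmisdwdis{m}}$, of the measure $\yomasq$ on $\yoomega$, and of the $\ell^{1}$-product metric $\yoosmisdwdis{m}=\yowasdis{m}\times_{\ell^{1}}\yofmodisn$, this distance equals
\[
\yoosmislldwdis{m}(\yosubmap(x_{n}), \yosubmap(x_{0}))
=\sum_{s\in\yoomega}\frac{1}{2^{s+1}}\Bigl(\yowasdis{m}(\yohsmapl(x_{n}, s), \yohsmapl(x_{0}, s))+\yofmodisn(\yohsmapr(x_{n}, s), \yohsmapr(x_{0}, s))\Bigr).
\]
The whole argument then reduces to interchanging the limit $n\to\infty$ with this weighted sum, which I would justify by the dominated convergence theorem for the probability measure $\yomasq$.

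For the pointwise convergence of each summand, I would invoke that, for every fixed $s\in\yoomega$, the map $\yosubmap_{s}=(\yohsmapl_{s}, \yohsmapr_{s})\colon X\to\yoosmisdwsp{A}=\yowassp{A}\times S$ is continuous: its first coordinate is continuous by Lemma \ref{lem:hlmap}, its second by Lemma \ref{lem:hrmap}\ref{item:hrconti}, and the $\ell^{1}$-product metric $\yoosmisdwdis{m}$ induces the product topology of $\yowassp{A}\times S$ (Proposition \ref{prop:ellisom}). Hence $\yoosmisdwdis{m}(\yosubmap_{s}(x_{n}), \yosubmap_{s}(x_{0}))\to 0$ for every $s$, giving the required pointwise convergence of the integrand.

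The crucial point is the dominating bound, uniform in $s$. Fix any $p\in A$. Since every coordinate of $\yohsmapr(\cdot, s)$ lies in $[0, 1]$, the $S$-term is at most $1$. For the Wasserstein term I would use the triangle inequality through $\yodmas{p}$ together with Lemma \ref{lem:wasfin1}, which gives $\yowasdis{m}(\yohsmapl(x, s), \yodmas{p})=\sum_{O\in\yowdce{s}}\yowdge{O, s}(x)\, w(\yowdae{O, s}, p)$ when $x\in X\setminus A$; then Lemma \ref{lem:wd1}, applied with $a=p$ to each $O$ with $\yowdge{O, s}(x)>0$, bounds each $w(\yowdae{O, s}, p)$ by $4w(x, p)$, and since $\sum_{O}\yowdge{O, s}(x)=1$ this yields $\yowasdis{m}(\yohsmapl(x, s), \yodmas{p})\le 4w(x, p)$ for all $s$, the case $x\in A$ being immediate from Corollary \ref{lem:oswasext}. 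Consequently the Wasserstein term is at most $4\bigl(w(x_{n}, p)+w(x_{0}, p)\bigr)$, and since the convergent sequence $\{x_{n}\}$ is $w$-bounded, the full summand is dominated by a constant (independent of $s$), which is trivially $\yomasq$-integrable. Dominated convergence then yields the claim.

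The main obstacle I anticipate is exactly securing this $s$-uniform domination of the Wasserstein term, so that the tail of the sum is negligible uniformly in $n$; everything else is the routine product-topology continuity of the finitely supported pieces $\yosubmap_{s}$, and the interchange of limit and sum is immediate once the bound is in hand. I note that working with the specific representatives $m$ and $w$ rather than arbitrary members of the relevant classes is legitimate because the topologies involved are independent of that choice (Propositions \ref{prop:cbasicstop} and \ref{prop:wassametop}).
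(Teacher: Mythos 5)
Your proof is correct, and it takes a genuinely cleaner route than the paper's. The paper proves continuity by a bare-hands $\epsilon$--$\delta$ argument split into the two cases $p\in X\setminus A$ and $p\in A$: it truncates the weighted sum at a level $N$ with $2^{-N}\le\alpha\epsilon$, re-derives quantitative smallness of the finitely many summands $s\le N$ via ad hoc conditions on $\eta$ (its (a2)--(a3), (b2)--(b3)), and bounds the tail summands by explicit constants ($20w(p,A)+1$ in the first case, $2$ in the second) obtained from Lemma \ref{lem:wasfinfin} and Lemma \ref{lem:wd1}. You instead observe that the whole argument is an instance of dominated convergence over the probability space $(\yoomega,\yomasq)$: pointwise convergence of each summand comes for free from the already-established coordinate continuity (Lemma \ref{lem:hlmap}, Lemma \ref{lem:hrmap}\,\ref{item:hrconti}, and the $\ell^{1}$-product structure), and your $s$-uniform dominating bound
\[
\yowasdis{m}\bigl(\yohsmapl(x,s),\yodmas{p}\bigr)\le 4\,w(x,p),
\qquad p\in A,
\]
obtained from Lemma \ref{lem:wasfin1} together with Lemma \ref{lem:wd1}, is exactly the estimate the paper itself isolates in Lemma \ref{lem:jbounded} (its $4w(x,y)+1$ bound), so the domination by a constant along a convergent sequence is legitimate. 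What your packaging buys: no case distinction at points of $A$ versus points off $A$, no explicit constants $\alpha$, $\eta$, and reuse of the stated lemmas rather than re-proving quantitative versions of them inside the continuity proof. What the paper's version buys in exchange is an explicit modulus of continuity at each $p$ (with constants depending only on $w(p,A)$), which your soft argument does not produce; nothing later in the paper appears to need that extra information, so your argument would serve as a drop-in replacement. The only point worth making explicit in a final write-up is the one you already flag implicitly: the triangle inequality through $\yodmas{p}$ needs all distances finite, which is supplied by Lemma \ref{lem:jbounded}, and measurability of the summands in $s$ is trivial on the discrete space $\yoomega$.
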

%
\begin{proof}
Take 
an arbitrary  point 
$p\in X$
and 
$\epsilon\in (0, \infty)$. 
Take a sufficiently large number 
$N$ such that 
$2^{-N}\le \alpha \epsilon$, 
where 
$\alpha \in (0, \infty)$ 
satisfies that 
\begin{align}
&\alpha \le\min\left\{\frac{1}{2}\cdot \frac{1}{20w(p, A)+1},\frac{1}{4}\right\}.\tag{$\alpha$-1}\label{al:aa}
\end{align}
We divide 
the proof into two cases.

\yocase{1}{$p\in X\setminus A$}
Take 
$b\in A$ 
such that 
$w(p, b)\le 2w(p, A)$. 
We take a
sufficiently small number 
$\eta\in (0, \infty)$
 so that 
the next statements are true:
\begin{enumerate}[label=\textup{(a\arabic*)}]

\item 
We have 
$\eta\le w(p, A)$,

\item\label{item:aeta1}
For every 
$z\in U(p, \eta; w)$, 
we have 
$w(z, A)\le w(p, A)+\eta$. 
As a result, we observe that 
$w(b, z)
\le w(b, p)+w(p, z)
\le 2w(p, A)+\eta
\le 3w(p, A)$.

\item\label{item:aeta2}
For every 
$z\in U(p, \eta; w)$,
 and 
for every 
$s\in \{0, \dots, N\}$, 
we have 
\[
\yofmodisn(\yohsmapr(z, s), \yohsmapr(p, s))\le 
\frac{\epsilon}{4}.
\]

\item\label{item:aeta3}
For every 
$z\in U(p, \eta; w)$, 
and 
for every 
$s\in \{0, \dots, N\}$, 
we have 
\[
\sum_{O\in \yowdce{s}}
\left|\yowdge{O, s}(z)-\yowdge{O, s}(p)\right|\le 
\frac{1}{8w(p, A)+1}\cdot \frac{\epsilon}{4}. 
\]
\end{enumerate}
First, 
we provide 
an 
upper estimation of 
$\yoosmisdwdis{m}(\yosubmap_{s}(z), \yosubmap_{s}(p))$
for every 
$s\in \{0, \dots, N\}$. 
For every 
$s\in \{0, \dots, N\}$, 
Lemma 
\ref{lem:wd1} 
implies that 
\[
m(\yowdae{O, s}, b)
\le 4w(p, b)\le 8 w(p, A). 
\]
Then, 
for every 
$s\in \{0, \dots, N\}$, 
Lemma 
\ref{lem:wasfinfin}
and 
\ref{item:aeta3} 
show  that 
\begin{align*}
&\yowasdis{w}(\yohsmapl(z, s), \yohsmapl(p, s))
\le 
\sum_{O\in \yowdce{s}}
|\yowdge{O, s}(z)-\yowdge{O, s}(p)|m(b, \yowdae{O, s})
\\
&\le 
8w(p, A)\cdot \sum_{O\in \yowdce{s}}
|\yowdge{O, s}(z)-\yowdge{O, s}(p)| \\
&\le 
\frac{1}{8w(p, A)+1}\cdot \frac{\epsilon}{4}\cdot  
8w(p, A)\le 
\frac{\epsilon}{4}. 
\end{align*}
Next, 
we estimate 
$\yoosmisdwdis{m}(\yosubmap_{s}(z), \yosubmap_{s}(p))$. 
By the argument discussed above, 
and by 
\ref{item:aeta2}, 
for every 
$s\in \{0, \dots, N\}$, 
we have 
\begin{align*}
&\yoosmisdwdis{m}
(\yosubmap_{s}(z), \yosubmap_{s}(p))\\
&=\yowasdis{m}(\yohsmapl(x, s), \yohsmapl(p, s))+
\yofmodisn(\yohsmapr(x, s), \yohsmapr(p, s))
\le 
\frac{\epsilon}{4} +\frac{\epsilon}{4}
=\frac{\epsilon}{2}. 
\end{align*}

From now on, 
we estimate 
$\yoosmisdwdis{m}(\yosubmap_{s}(z), \yosubmap_{s}(p))$ 
for 
$s\in \yoomega$
 with 
$N\le s$. 
For every 
$s\in \yoomega$ 
with 
$s\ge N$,
using 
Lemma 
\ref{lem:wasfinfin} 
again, 
and
using 
Lemma
 \ref{item:aeta1}, 
 we have 
\begin{align*}
&\yowasdis{m}(\yohsmapl(z, s), \yohsmapl(p, s))\le 
\sum_{O\in \yowdce{s}}
\left|\yowdge{O, s}(z)-\yowdge{O, s}(p)
\right|w(\yowdae{O, s}, b)\\
&\le 
\sum_{O\in \yowdce{s}}
\left(
\yowdge{O, s}(z)w(\yowdae{O, s}, b)+
\yowdge{O, s}(p)
w(\yowdae{O, s}, b)\right)=\\
&
\sum_{O\in \yowdce{s}, \yowdge{O, s}(z)>0}
\yowdge{O, s}(z)w(\yowdae{O, s}, b)+
\sum_{O\in \yowdce{s}, \yowdge{O, s}(p)>0}
\yowdge{O, s}(z)w(\yowdae{O, s}, b)\\
&\le 
4w(b, z)\cdot \sum_{O\in \yowdce{s}, \yowdge{O, s}(z)>0}
\yowdge{O, s}(z)
+
4w(b, p)\cdot \sum_{O\in \yowdce{s}, \yowdge{O, s}(p)>0}
\yowdge{O, s}(z)
\\
&=
4w(b, z)+4w(b, p)
\le4\cdot 3w(p, A)+4\cdot 2w(p, A)
\le 20w(p, A). 
\end{align*}
Since  
$\yofmodisn(\yohsmapr(z, s), \yohsmapr(p, s))\le 1$
is always true, 
for every 
$s\in \yoomega$ 
with 
$N\le s$, 
we also have 
\begin{align*}
&\yoosmisdwdis{m}
(\yosubmap_{s}(z), \yosubmap_{s}(p))
\le
\yowasdis{m}(\yohsmapl(z, s), \yohsmapl(p, s))+1\\
&
\le 20w(x, A)+1. 
\end{align*}

Therefore, 
combining the estimations obtained above, 
we see: 
\begin{align*}
&\yoosmislldwdis{m}
(\yosubmap(z), \yosubmap(p))
=
\int_{\yoomega}
\yoosmisdwdis{m}
(\yosubmap_{s}(z), \yosubmap_{s}(p))
\ d\yomasq(s)\\
&=
\sum_{s=0}^{\infty}\frac{1}{2^{s+1}}
\yoosmisdwdis{m}
(\yosubmap_{s}(z), \yosubmap_{s}(p))\\
&
\le
\sum_{s=N}^{\infty}
\frac{\yoosmisdwdis{m}
(\yosubmap_{s}(z), \yosubmap_{s}(p))
}{2^{s+1}}
+\sum_{s=0}^{N}\frac{\yoosmisdwdis{m}
(\yosubmap_{s}(z), \yosubmap_{s}(p))
}{2^{s+1}}\\
 &\le 
 \sum_{s=N}^{\infty}\frac{20w(x, A)+1}{2^{s+1}}
+\sum_{s=0}^{N}\frac{2^{-1}\epsilon}{2^{s+1}}\le 
\frac{2(20w(x, A)+1)}{2^{N+1}}+\frac{\epsilon}{2}\\
&\le 
(20w(x, A)+1)2^{-N}+\frac{\epsilon}{2}
\le \alpha(20w(x, A)+1)\epsilon+\frac{\epsilon}{2}
\\
&\le \frac{\epsilon}{2}+\frac{\epsilon}{2}
=\epsilon.
\end{align*}
This means 
that 
$J$ 
is continuous at 
$p$.

\yocase{2}{$p\in A$}
Take a sufficiently
 small 
number 
$\eta\in (0, \infty)$
so that 
the next 
statements are satisfied: 
\begin{enumerate}[label=\textup{(b\arabic*)}]

\item\label{item:b1}
We have $\eta\le 2$.

\item\label{item:b2}
For  every 
 point 
 $z\in U(p, \eta; w)$, 
we have 
$w(z, A)\le 4^{-1}\cdot 2^{-N}$. 

\item\label{item:b3}
For  every 
 point 
 $z\in U(p, \eta; w)$, 
 and 
for 
every
$s\in \{0, \dots, N\}$, 
we
have
\[
\yofmodisn(\yohsmapr(z, s), \yohsmapr(p, s))\le 4^{-1}\epsilon. 
\]
\end{enumerate}

Under this conditions, we obtain:
\begin{enumerate}[label=\textup{(c\arabic*)}]

\item\label{item:b4}
For  every 
 point 
 $z\in U(p, \eta; w)$, 
 and 
for every 
$s\in \{0, \dots, N\}$,
we also have 
\begin{align*}
& 
\yowasdis{m}(\yohsmapl(z, s), \yohsmapl(p, s))=
\yowasdis{m}(\yohsmapl(z, s), \yodmas{p})\\
&=
\sum_{O\in \yowdce{s}}
\yowdge{O, s}(z)\cdot w(\yowdae{O, s}, p)
\le 4w(z, A)\le 2^{-N}. 
\end{align*}
\item\label{item:b5}
For  every 
 point 
 $z\in U(p, \eta; w)$, 
if  
$z\not\in A$, 
then 
for every 
$s\in \zz_{\ge 0}$
we have 
\begin{align*}
&\yoosmisdwdis{m}
(\yosubmap_{s}(z), \yosubmap_{s}(p))\le \yowasdis{m}(\yohsmapl(x, s), \yohsmapl(p, s))+1\\
& = 
 \yowasdis{m}(\yohsmapl(x, s), \yodmas{p})+1
 =
 \sum_{O\in \yowdce{s}}
\yowdge{O, s}\cdot
 w(\yowdae{O, i}, p)+1\\
 &
\le 4w(z, A)+1
\le 2^{-N}+1
\le 2. 
\end{align*}

\item\label{item:b6}
For  every 
 point 
 $z\in U(p, \eta; w)$, 
if 
 $z\in A$, 
 then we have 
\[
w(z, p)=m(z, p)\le \eta\le 2.
\]
\end{enumerate}
Then, 
due to 
\ref{item:b3}
and 
\ref{item:b4}, 
for every 
$s\in \{0, \dots, N\}$, 
we have 
\begin{align*}
&\yoosmisdwdis{m}(\yosubmap_{s}(z), \yosubmap_{s}(p))\\
&=
 \yowasdis{m}(\yohsmapl(z, s), \yohsmapl(p, s))+
 \yofmodisn(\yohsmapr(z, s), \yohsmapr(p, s))\\
 &
<2^{-N}+4^{-1}\epsilon
\le (\alpha+4^{-1})\epsilon. 
\end{align*}
According to 
\ref{item:b5}
and 
\ref{item:b6}, 
for 
$s\in \yoomega$ 
with 
$N\le s$, 
we also  have 
\[
\yoosmisdwdis{m}(\yosubmap_{s}(z), \yosubmap_{s}(p))\le 2. 
\]
Thus 
we obtain  
\begin{align*}
&\yoosmislldwdis{m}
(\yosubmap(z),\yosubmap(p))
=
\int_{\yoomega}\yoosmisdwdis{m}
(\yosubmap_{s}(z), \yosubmap_{s}(p))
\ d\yomasq(s)\\
&=
\sum_{s=0}^{\infty}2^{-(s+1)}
\yoosmisdwdis{m}
(\yosubmap_{s}(z), \yosubmap_{s}(p))
\le 
\sum_{s=N}^{\infty}\frac{2}{2^{s+1}}
+
\sum_{n=0}^{N}\frac{(\alpha+4^{-1})\epsilon}{2^{s+1}}\\
&\le 
2\cdot 2^{-N}+(\alpha+4^{-1})\epsilon 
\le 2\alpha \epsilon+(\alpha+4^{-1})\epsilon 
\le \frac{\epsilon}{2}+\frac{\epsilon}{2}
=\epsilon. 
\end{align*}
This means that 
$\yosubmap$ 
is continuous at 
$p\in A$. 

In any case, 
we confirm that 
$\yosubmap$
 is continuous. 
Hence the proof is completed. 
\end{proof}

\begin{prop}\label{prop:jhomeo}
The map 
$\yosubmap\colon X\to \yoosmislldwsp{A}$
is homeomorphic. 
\end{prop}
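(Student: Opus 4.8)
The plan is to deduce that $\yosubmap$ is a topological embedding from the sequential criterion, which is legitimate since both $X$ and $\yoosmislldwsp{A}$ are metrizable. As $\yosubmap$ is already continuous by Lemma \ref{lem:jconti}, it suffices to prove the implication: if $\yosubmap(x_i)\to \yosubmap(p)$ in $\yoosmislldwsp{A}$, then $x_i\to p$ in $X$. This single implication yields injectivity (apply it to a constant sequence) and continuity of the inverse on $\yosubmap(X)$. The starting point is that, because $\yomasq=\sum_{s}2^{-(s+1)}\yodmas{s}$, we have $\yoosmislldwdis{m}(\yosubmap(x),\yosubmap(y))=\sum_{s}2^{-(s+1)}\yoosmisdwdis{m}(\yosubmap_{s}(x),\yosubmap_{s}(y))$, a sum of nonnegative terms; hence $\yoosmislldwdis{m}(\yosubmap(x_i),\yosubmap(p))\to 0$ forces, for every fixed $s$, both $\yowasdis{m}(\yohsmapl(x_i,s),\yohsmapl(p,s))\to 0$ and $\yofmodisn(\yohsmapr(x_i,s),\yohsmapr(p,s))\to 0$. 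I will also record the identity $\yofmodisn(\yohsmapr(x,s),0)=\yocoefff{s}(x)$, which follows from Lemma \ref{lem:sigma} (some $\yowdse{O,s}(x)=1$ for $x\in X\setminus A$); consequently $\yocoefff{s}(x_i)\to \yocoefff{s}(p)$ for every $s$, by the reverse triangle inequality for $\yofmodisn$. I then split into the two cases of Lemma \ref{lem:jconti}.

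In the case $p\in A$ we have $\yohsmapl(p,0)=\yodmas{p}$ and $\yohsmapr(p,s)=0$ for all $s$. For each fixed $N$, eventually $\yofmodisn(\yohsmapr(x_i,N),0)<1$, so Lemma \ref{lem:hrmap}\ref{item:hrle1} gives $w(x_i,A)\le 2^{-N+1}$; letting $N\to\infty$ shows $w(x_i,A)\to 0$. Next, the $(0,w)$-WD estimates (exactly as in the proof of Lemma \ref{lem:wd1}) give $w(x_i,\yowdae{O,0})\le 2\,w(x_i,A)$ whenever $\yowdge{O,0}(x_i)>0$, so that $w(x_i,p)\le 2\,w(x_i,A)+\sum_{O}\yowdge{O,0}(x_i)\,m(\yowdae{O,0},p)$; by Lemma \ref{lem:wasfin1} the last sum equals $\yowasdis{m}(\yohsmapl(x_i,0),\yodmas{p})=\yowasdis{m}(\yohsmapl(x_i,0),\yohsmapl(p,0))\to 0$ (when $x_i\in A$ the inequality is immediate from $\yowasdis{m}(\yodmas{x_i},\yodmas{p})=w(x_i,p)$). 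Both terms vanish, whence $w(x_i,p)\to 0$.

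The case $p\in X\setminus A$ is the heart of the matter. Fix $N_{0}$ with $w(p,A)>2^{-N_{0}+1}$, so $\yocoefff{N_{0}}(p)=1$; since $\yocoefff{N_{0}}(x_i)\to 1$, for large $i$ we get $\yocoefff{N_{0}}(x_i)>0$ and therefore $w(x_i,A)>2^{-N_{0}}$. Hence for every $N\ge N_{0}+1$ and $i$ large we have $w(x_i,A)>2^{-N+1}$, i.e. $\yocoefff{N}(x_i)=1=\yocoefff{N}(p)$, while also $\yofmodisn(\yohsmapr(x_i,N),\yohsmapr(p,N))<1$. Lemma \ref{lem:hrmap}\ref{item:hrle12} then yields $w(x_i,p)\le 16\cdot 4^{-N-1}w(x_i,A)$. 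Choosing $N\ge 2$ so that $16\cdot 4^{-N-1}\le \tfrac14$, the $1$-Lipschitz property of $w(\cdot,A)$ gives $w(x_i,A)\le w(p,A)+w(x_i,p)\le w(p,A)+\tfrac14 w(x_i,A)$, which bootstraps to $w(x_i,A)\le \tfrac43 w(p,A)$, and hence $w(x_i,p)\le \tfrac43\cdot 16\cdot 4^{-N-1}w(p,A)$. Since $N$ may be taken arbitrarily large, $\limsup_i w(x_i,p)=0$, that is $x_i\to p$.

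I expect the main obstacle to be precisely this second case. The difficulty is that the scale-$N$ bound $16\cdot 4^{-N-1}w(x_i,A)$ only tends to $0$ as $N\to\infty$, yet invoking Lemma \ref{lem:hrmap}\ref{item:hrle12} at scale $N$ demands the \emph{exact} equality $\yocoefff{N}(x_i)=1$, which in turn rests on the lower bound $w(x_i,A)\ge 2^{-N+1}$. Extracting this lower bound from a single fixed scale $N_{0}$, and then closing the loop with the self-improving upper estimate $w(x_i,A)\le \tfrac43 w(p,A)$, is the crux; once these are in place the remaining steps are routine applications of the WD-collection inequalities and the Kantorovich--Rubinstein computations already established.
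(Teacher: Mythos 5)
Your proof is correct, and while it shares the paper's skeleton (continuity from Lemma \ref{lem:jconti}, then a case split on $p\in A$ versus $p\in X\setminus A$, with Lemma \ref{lem:hrmap} invoked at a well-chosen scale $N$), the execution differs at two genuine points, both to your credit. First, for $p\in X\setminus A$ the paper simply \emph{asserts} that $\eta$ can be chosen so that every $z$ with $\yoosmislldwdis{m}(\yosubmap(p),\yosubmap(z))<\eta$ satisfies $\yocoefff{N}(z)=1$ and $2^{-N+1}<w(z,A)$; your identity $\yofmodisn(\yohsmapr(x,s),0)=\yocoefff{s}(x)$ (valid by Lemma \ref{lem:sigma}, and trivially for $x\in A$), combined with the two-scale device --- recover the lower bound $w(x_i,A)>2^{-N_0}$ from $\yocoefff{N_0}(x_i)\to 1$ at one fixed scale, then apply Lemma \ref{lem:hrmap}\ref{item:hrle12} at all scales $N\ge N_0+1$ --- supplies exactly the justification the paper leaves implicit, and it is indeed the crux you identified. (Your bootstrap $w(x_i,A)\le \tfrac{4}{3}w(p,A)$ is sound but avoidable: the hypotheses of \ref{item:hrle12} are symmetric in $x$ and $y$, so applying it with $x=p$ gives $w(x_i,p)\le 16\cdot 4^{-N-1}w(p,A)$ directly, which is the form the paper uses.) Second, for $p\in A$ the paper works at a large scale $N$, selecting $P\in\yoidx$ minimizing $w(\yowdae{O,N},p)$ and chaining $p\to\yowdae{P,N}\to\yowdpe{P,N}\to z$ via \ref{item:wdapoint} and the strong condition \ref{item:wdstrong} to reach the $\eta+71\cdot 2^{-N}$ bound; you instead argue purely at scale $s=0$, using the exact Kantorovich--Rubinstein identity $\yowasdis{m}(\yohsmapl(x_i,0),\yodmas{p})=\sum_{O}\yowdge{O,0}(x_i)\,m(\yowdae{O,0},p)$ from Lemma \ref{lem:wasfin1} together with $w(x_i,\yowdae{O,0})\le 2\,w(x_i,A)$ (the constant checks out: $\tfrac{3}{2}\cdot\tfrac{4}{3}=2$ from \ref{item:wdopenball} and \ref{item:wdapoint} with $k=0$) and $w(x_i,A)\to 0$ from \ref{item:hrle1}; this is simpler than the paper's chain and does not need \ref{item:wdstrong} in that case. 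Finally, your reduction to the single implication $\yosubmap(x_i)\to\yosubmap(p)\Rightarrow x_i\to p$ is legitimate since source and target are metrizable, and it does deliver both injectivity (constant sequences) and continuity of $\yosubmap^{-1}$ on $\yosubmap(X)$, so together with Lemma \ref{lem:jconti} it establishes that $\yosubmap$ is an embedding, which is what the proposition asserts.
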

%
\begin{proof}
Take
an  arbitrary point 
 $p\in X$,
 and 
 an arbitrary number
$\epsilon \in (0, \infty)$. 
To prove the proposition, 
 we will find a 
 sufficiently small 
$\eta\in (0, \infty)$
 so  that 
if we have 
$\yoosmislldwdis{m}
(\yosubmap(p), \yosubmap(z))<\eta$, 
then 
$w(z, p)\le \epsilon$. 
Put 
$V_{\eta}=\{\, z\in X\mid \yoosmislldwdis{m}
(\yosubmap(p), \yosubmap(z))<\eta\, \}$. 

\yocase{1}{$p\in X\setminus A$}
Take a sufficiently large 
$N\in \yoomega$ 
so that 
$2^{-N+1}<w(p, A)$
and 
$16\cdot 4^{-N-1}w(p, A)\le \epsilon$. 
We also take a 
sufficiently small number 
$\eta$ 
so that 
\begin{enumerate}[label=\textup{(a\arabic*)}]
\item\label{item:a120240920}
$2^{N}\eta<1$;
\item\label{item:202499a2}
$2^{-N+1}< w(z, A)$. 
\end{enumerate}
The condition 
\ref{item:202499a2}
implies that 
every
 $z\in V_{\eta}$ 
 satisfies 
 that 
$\yocoefff{N}(z)=\yocoefff{N}(p)=1$. 
Then, 
by the definition of 
the metric
 $\yoosmislldwdis{m}$
 and by 
 \ref{item:a120240920}, 
for every 
$z\in V_{\eta}$, 
we have 
\[
\yofmodisn(\yohsmapr(z, N), \yohsmapr(p, N))
\le 2^{N}\eta< 1. 
\]
Thus, Statement \ref{item:hrle12}
in Lemma \ref{lem:hrmap} implies that 
\[
w(z, p)\le 16\cdot 4^{-N-1}w(p, A)\le \epsilon.
\] 
This is the inequality that we want to prove.

\yocase{2}{$p\in A$}
Take a sufficiently  large number 
$N\in \zz_{\ge 0}$ 
so that 
$68\cdot 2^{-N}\le 2^{-1} \epsilon$.  
We also take 
a sufficiently small
number 
$\eta\in (0, \infty)$
 so that 
$2^{N}\eta<1$
and 
$\eta<2^{-1}\epsilon$. 
If 
$z\in V_{\eta}$ belongs to 
$A$, 
then 
the inequality 
$\yoosmislldwdis{m}
(\yosubmap(p), \yosubmap(z))<\eta$ 
implies that 
$w(z, p)\le \eta$
since 
we already know 
$\yoosmislldwdis{m}(\yosubmap(x), \yosubmap(y))=m(x, y)$ 
whenever 
$x, y\in A$
 (see Proposition  
 \ref{prop:jextension}). 
Thus, 
we may assume that 
$z\in X\setminus A$. 
In this setting, 
by the definition of 
the metric
 $\yoosmislldwdis{m}$, 
the point 
$z\in V_{\eta}$ 
satisfies 
\[
\yofmodisn(\yohsmapr(z, N), \yohsmapr(p, N))=
\yofmodisn(\yohsmapr(z, N), 0)
\le 2^{N}\eta <1. 
\]
Hence 
Statement 
\ref{item:hrle1}
in Lemma
 \ref{lem:hrmap}
implies that 
$w(z, A)\le 2^{-N+1}$. 
Put 
$\yoidx=\{\, O\in \yowdce{N}\mid \yowdge{O, N}(z)>0\, \}$. 
We obtain 
\[
\yowasdis{m}(\yohsmapl(p, N), \yohsmapl(z, N))
=\sum_{O\in \yoidx}\yowdge{O, N}(z)\cdot 
w(\yowdae{O, N}, p)
\]
Take
 $P\in\yoidx$ 
 such that 
$w(\yowdae{P, N}, p)=\min_{O\in \yoidx} w(\yowdae{O, N}, p)$. 
Thus we have 
\begin{align*}
&\yowasdis{m}(\yohsmapl(p, N), \yohsmapl(z, N))
=\sum_{O\in \yoidx}\yowdge{O, N}(z)\cdot 
w(\yowdae{O, N}, p)
\\
&
\ge 
\sum_{O\in \yoidx}\yowdge{O, N}(z)\cdot 
w(\yowdae{P, N}, p)
=
w(\yowdae{P, N}, p)\cdot 
\sum_{O\in \yoidx}\yowdge{O, N}(z)
=w(\yowdae{P, N}, p), 
\end{align*}
and then 
this estimation indicates 
\[
w(\yowdae{P, N}, p)\le 
\yoosmisdwdis{m}
(\yosubmap_{N}(p), \yosubmap_{N}(z))\le 
\yoosmislldwdis{m}
(\yosubmap_{N}(p), \yosubmap_{N}(z)). 
\]
Thus 
$w(\yowdae{P, N}, p)\le \eta$. 
Since 
$\yowdge{O, N}(z)>0$
 for all 
$O\in \yoidx$, 
using 
the condition 
\ref{item:wdstrong}, 
we 
also have 
\[
w(z, \yowdpe{P, N})\le 16\cdot  4^{-N-1}w(z, A)
\le 16\cdot w(z, A)\le 32\cdot 2^{-N}.
\] 
The 
condition 
\ref{item:wdapoint}
 also 
implies that 
\begin{align*}
&w(\yowdpe{P, N}, \yowdae{P, N})\le 
(1+4^{-N-1})w(\yowdpe{P, N}, A)\\
&\le 
(1+4^{-N-1})(w(\yowdpe{P, N}, z)+w(z, A))\\
&\le 
(1+4^{-N-1})(1+8\cdot 4^{-N-1}) w(z, A)\le 2\cdot 9w(z, A)\\
&\le 18 w(z, A)\le 
36 \cdot 2^{-N}.
\end{align*}
As a result,
 we see the following estimation:
\begin{align*}
w(p, z)&\le w(p, \yowdae{P, N})
+w(\yowdae{P, N}, \yowdpe{P, N})+w(\yowdpe{P, N}, z)\\
&
\le \eta+ 36\cdot  2^{-N}+32\cdot 2^{-N}
=\eta+68\cdot 2^{-N}
\le  2^{-1}\epsilon+2^{-1}\epsilon=\epsilon. 
\end{align*}

In any case, 
we
 have 
 $w(p, z)\le \epsilon$. 
 Since 
 $p$ 
 and 
 $\epsilon$
 are arbitrary, 
 we conclude that 
 $\yosubmap$
 is homeomorphic. 
 This finishes the proof. 
\end{proof}

We next consider 
the topologies on 
$\yosubmap(X)$ 
induced by 
$\yoosmislldwdis{d}$. 
\begin{lem}\label{lem:jjbasics}
The following statements are true: 
\begin{enumerate}[label=\textup{(\arabic*)}]

\item\label{item:jjbounded}
For every 
$d\in \yoccl$ 
and 
for every pair 
$x, y\in X$, we have
\[
\yoosmislldwdis{d}
(\yosubmap(x), \yosubmap(y))<\infty.
\]

\item\label{item:jjtop}
 For every 
$d\in \yoccl$,  
the metric 
$\yoosmislldwdis{d}$
generates  the same topology on 
$\yosubmap(X)$ 
as 
$\yoosmislldwdis{w}$. 

\end{enumerate}
\end{lem}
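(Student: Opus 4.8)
The plan is to prove the two statements in turn, reducing each to what is already known for the base metric $m$ together with the observation that replacing $m$ by an arbitrary $d\in\yoccl$ alters neither finiteness nor the generated topology. Throughout I fix a point $a\in A$ (the points $\yowdae{O,s}$ used in the construction already presuppose $A\neq\emptyset$).

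For \ref{item:jjbounded} I would first treat $d=m$. By Lemma \ref{lem:jbounded} we have $\yoosmislldwdis{m}(\yosubmap(x), \yosubmap(a))\le 4w(x,a)+1<\infty$ and, symmetrically, $\yoosmislldwdis{m}(\yosubmap(a), \yosubmap(y))<\infty$ for all $x,y\in X$, so the triangle inequality yields $\yoosmislldwdis{m}(\yosubmap(x), \yosubmap(y))<\infty$. To pass to a general $d\in\yoccl$, I would use that $\yoosmislldw$ is the composition of the osmotic constructions $\yowas$, $\yoosmisa$, and $\yoosmisc$ (Theorem \ref{thm:osmotic}); chaining the isometry statements of Proposition \ref{prop:wasisom}\,\ref{item:wasisom}, Proposition \ref{prop:ellisom}\,\ref{item:osaisom}, and Proposition \ref{prop:cbasics}\,\ref{item:cisom} gives
\[
\metdis_{\yoosmislldwsp{A}}(\yoosmislldwdis{d}, \yoosmislldwdis{m})=\metdis_{A}(d, m).
\]
Since $d,m\in\yoccl$ forces $\metdis_{A}(d,m)<\infty$, this produces the uniform comparison $\yoosmislldwdis{d}(u,v)\le \yoosmislldwdis{m}(u,v)+\metdis_{A}(d,m)$ valid for all $u,v\in\yoosmislldwsp{A}$, and applying it to $u=\yosubmap(x)$, $v=\yosubmap(y)$ gives \ref{item:jjbounded}.

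For \ref{item:jjtop} I would again factor through the base metric $m$. On the one hand, Lemma \ref{lem:jconti} and Proposition \ref{prop:jhomeo} together assert that $\yosubmap\colon(X,w)\to(\yoosmislldwsp{A}, \yoosmislldwdis{m})$ is a homeomorphism onto its image, so $\yoosmislldwdis{m}$ induces on $\yosubmap(X)$ exactly the topology transported from $w$. On the other hand, I would show that $\yoosmislldwdis{d}$ and $\yoosmislldwdis{m}$ generate the same topology on all of $\yoosmislldwsp{A}$ by climbing the three constructions: Proposition \ref{prop:wassametop} compares $\yowasdis{d}$ with $\yowasdis{m}$ on $\yowassp{A}$, Proposition \ref{prop:ellisom}\,\ref{item:osmisatop} carries this through the $\ell^{1}$-product, and Proposition \ref{prop:cbasicstop} carries it through the space of measurable maps. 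Combining the two observations, $\yoosmislldwdis{d}$ and $w$ induce the same topology on $\yosubmap(X)$, which is \ref{item:jjtop}.

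The delicate point is the final step of \ref{item:jjtop}: the topological invariance for the space of measurable functions (Proposition \ref{prop:cbasicstop}) is only available when the index space is Polish. Here the hypothesis is met because we have fixed $M=\yoomega$, a countable discrete—hence Polish—space, with $\yomasq$ a Borel probability measure on it. I would also need to verify that the two intermediate metrics $\yoosmisadis{\yowasdis{d}}$ and $\yoosmisadis{\yowasdis{m}}$ lie in a common class of $\yoslamet{\yoosmisasp{\yowassp{A}}}$ so that Propositions \ref{prop:cbasics} and \ref{prop:cbasicstop} apply; this follows at once, since their supremum distance equals $\metdis_{A}(d,m)<\infty$ by the isometry chain above.
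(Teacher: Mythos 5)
Your proposal is correct and takes essentially the same route as the paper, whose proof of \ref{item:jjbounded} cites Lemma \ref{lem:jbounded} together with Proposition \ref{prop:cbasics}\,\ref{item:cfinite}, and whose proof of \ref{item:jjtop} combines exactly Proposition \ref{prop:ellisom}\,\ref{item:osmisatop}, Proposition \ref{prop:wassametop}, and Proposition \ref{prop:cbasicstop} (with Proposition \ref{prop:jhomeo} supplying, implicitly, the base case for $m$ and $w$). Your only deviation---deriving finiteness for general $d\in\yoccl$ from the isometry chain $\metdis_{\yoosmislldwsp{A}}(\yoosmislldwdis{d}, \yoosmislldwdis{m})=\metdis_{A}(d,m)<\infty$ instead of citing Proposition \ref{prop:cbasics}\,\ref{item:cfinite} directly---is an equivalent rephrasing of the same comparison argument, and your explicit checks that $M=\yoomega$ is Polish and that $\yoosmisadis{\yowasdis{d}}$ and $\yoosmisadis{\yowasdis{m}}$ share a class in $\yoslamet{\yoosmisdwsp{A}}$ are details the paper leaves tacit.
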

%
\begin{proof}
Statement 
\ref{item:jjbounded}
follows 
 from 
 Lemma 
 \ref{lem:jbounded}
 and 
Statements
 \ref{item:cfinite} 
in 
Proposition 
 \ref{prop:cbasics}. 
 
Statement 
\ref{item:jjtop}
is deduced from 
Statement \ref{item:osmisatop} in 
Proposition 
\ref{prop:ellisom}, 
Corollary 
\ref{cor:wassametop},
and 
 Proposition 
 \ref{prop:cbasicstop}. 
\end{proof}

Recall that 
if 
$X$ 
is completely metrizable, 
then 
we choose 
$m$ 
and 
$w$
as  complete metrics on 
$A$ and 
$X$, 
respectively. 

\begin{prop}\label{prop:jxclosed}
If 
$X$
 is 
 a
 completely metrizable 
 space, 
then 
for every complete 
metric 
$d$ 
in 
$\yoccl$, 
the metric  subspace  
$(\yosubmap(X), \yoosmislldwdis{d})$
of 
$\yoosmislldwsp{A}$
 is complete.  
In particular, 
the map 
$\yosubmap\colon X\to \yoosmislldwsp{A}$
is a closed map. 
\end{prop}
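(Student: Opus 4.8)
The plan is to prove completeness by showing that every $\yoosmislldwdis{d}$-Cauchy sequence in $\yosubmap(X)$ is the image of a $w$-convergent sequence in $X$, and then to read off closedness as a formal consequence. So fix a $\yoosmislldwdis{d}$-Cauchy sequence $(\yosubmap(x_n))_n$ in $\yosubmap(X)$. Since $\yosubmap$ is a homeomorphism onto its image (Proposition \ref{prop:jhomeo}) and $\yoosmislldwdis{d}$ induces the same topology on $\yosubmap(X)$ as $w$ (Lemma \ref{lem:jjbasics}\ref{item:jjtop}), it suffices to produce $x_{\ast}\in X$ with $x_n\to x_{\ast}$ in $(X,w)$: continuity of $\yosubmap$ (Lemma \ref{lem:jconti}) then forces $\yosubmap(x_n)\to\yosubmap(x_{\ast})\in\yosubmap(X)$ in $\yoosmislldwdis{d}$. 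Because $\yoosmislldwdis{d}(\yosubmap(x_n),\yosubmap(x_m))=\sum_{s}2^{-(s+1)}\yoosmisdwdis{d}(\yosubmap_{s}(x_n),\yosubmap_{s}(x_m))$ is a sum of non-negative terms, for each fixed $s$ both $\yowasdis{d}(\yohsmapl(x_n,s),\yohsmapl(x_m,s))\to0$ and $\yofmodisn(\yohsmapr(x_n,s),\yohsmapr(x_m,s))\to0$ as $n,m\to\infty$. As $\yofmodisn(\yohsmapr(x,s),0)=\yocoefff{s}(x)=\yocoeff{s}(w(x,A))$ (using Lemma \ref{lem:sigma}), each scalar sequence $(\yocoefff{s}(x_n))_n$ is Cauchy, with some limit $c_s\in[0,1]$. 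The argument then splits by dichotomy: either $c_s=0$ for every $s$, which forces $w(x_n,A)\to0$ (otherwise a coordinate $s$ with $2^{-s+1}<\limsup_n w(x_n,A)$ would satisfy $\yocoefff{s}(x_n)=1$ infinitely often), or $c_{s_0}>0$ for some $s_0$, which forces $\liminf_n w(x_n,A)\ge 2^{-s_0}>0$.

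In the bounded-away case $c_{s_0}>0$, I would first pin the sequence inside a single Whitney--Dugundji cell: choosing $n_0$ with $\yocoefff{s_0}(x_{n_0})>c_{s_0}/2$ and a cell $O_{\ast}\in\yowdce{s_0}$ on which $\yohsmapr(x_{n_0},s_0)$ attains this maximal value, the coordinate-$s_0$ Cauchy condition forces $\yowdse{O_{\ast},s_0}(x_m)>0$, hence $x_m\in O_{\ast}$, for all large $m$. By \ref{item:wdopenball} this bounds $w(x_m,A)$ both above and below, say $\rho_0\le w(x_m,A)\le R$. For an index $s$ with $2^{-s+1}\le\rho_0$ one has $\yocoefff{s}(x_m)=1$ for all large $m$, so the coordinate-$s$ Cauchy condition together with Lemma \ref{lem:hrmap}\ref{item:hrle12} gives $w(x_n,x_m)\le 16\cdot4^{-s-1}w(x_n,A)\le 16\cdot4^{-s-1}R$ for all large $n,m$; letting $s\to\infty$ shows $(x_n)_n$ is $w$-Cauchy, and completeness of $(X,w)$ supplies the limit $x_{\ast}$.

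The delicate case, and the one I expect to be the main obstacle, is $w(x_n,A)\to0$. Picking $a_n\in A$ with $w(x_n,a_n)\le 2w(x_n,A)\to0$, the naive attempt to estimate $\yowasdis{d}(\yohsmapl(x_n,s),\yodmas{a_n})$ directly runs into the additive gap between $d$ and $m=w|_{A^{2}}$, which need not vanish as the support of $\yohsmapl(x_n,s)$ shrinks. The way around this is to do all \emph{quantitative} work in $\yowasdis{m}$ and to transfer only \emph{convergence} back to $\yowasdis{d}$: by Lemma \ref{lem:wasfin1} and Lemma \ref{lem:wd1} one gets $\yowasdis{m}(\yohsmapl(x_n,s),\yodmas{a_n})\le 4w(x_n,a_n)\to0$ with no constant, while the coordinate-$s$ sequence is $\yowasdis{d}$-Cauchy, hence $\yowasdis{d}$-convergent by completeness of $\yowasdis{d}$ (Proposition \ref{prop:wasisom}\ref{item:wascomp}), hence topologically convergent, hence $\yowasdis{m}$-convergent to the same limit $\mu_s$ since $\yowasdis{d}$ and $\yowasdis{m}$ induce the same topology (Proposition \ref{prop:wassametop}). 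Combining these, $\yodmas{a_n}\to\mu_s$ in $\yowasdis{m}$; as the left-hand side is independent of $s$, all $\mu_s$ coincide, and $(\yodmas{a_n})_n$ being $\yowasdis{m}$-convergent makes $(a_n)_n$ an $m$-Cauchy sequence (Corollary \ref{lem:oswasext}). Completeness of $m$ then gives $a_n\to a_{\ast}\in A$, whence $w(x_n,a_{\ast})\le w(x_n,a_n)+m(a_n,a_{\ast})\to0$ and $x_{\ast}:=a_{\ast}$ works.

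In either case $x_n\to x_{\ast}$ in $(X,w)$, so $(\yosubmap(X),\yoosmislldwdis{d})$ is complete. Finally, a complete subspace of a metric space is closed, so $\yosubmap(X)$ is closed in $(\yoosmislldwsp{A},\yoosmislldwdis{d})$; since $\yosubmap$ is a homeomorphism of $X$ onto this closed subspace (Proposition \ref{prop:jhomeo}), it carries each closed subset of $X$ to a set closed in $\yosubmap(X)$ and therefore closed in $\yoosmislldwsp{A}$. Thus $\yosubmap$ is a closed map.
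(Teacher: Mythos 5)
Your proof is correct and takes essentially the same route as the paper's: coordinate-wise extraction from the weighted $\ell^{1}$-sum, a dichotomy according to whether $w(x_{n}, A)$ stays away from $0$ or tends to $0$, Statement \ref{item:hrle12} of Lemma \ref{lem:hrmap} plus completeness of $w$ in the first case, and in the second case the same transfer trick the paper uses (completeness of $\yowasdis{d}$ from Proposition \ref{prop:wasisom}, topological equivalence of $\yowasdis{d}$ and $\yowasdis{m}$ from Proposition \ref{prop:wassametop}, then $m$-Cauchyness of the nearest points $a_{n}$ via Lemmas \ref{lem:wasfin1} and \ref{lem:wd1} and completeness of $m$). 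The differences are only organizational: your dichotomy on the limits $c_{s}$ of $\yocoefff{s}(x_{n})$ avoids the paper's subsequence extractions, your cell-pinning argument replaces the paper's direct bound on $\sup_{i} w(x_{i}, A)$, and you make explicit the complete-hence-closed deduction that the paper leaves implicit.
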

%
\begin{proof}
Assume that 
a sequence 
$\{\yosubmap(x_{i})\}_{i\in \zz_{\ge 0}}$ 
is Cauchy with respect to 
$\yoosmislldwdis{d}$. 
We will show that 
$\{x_{i}\}_{i\in \zz_{\ge 0}}$
has a 
subsequence 
possessing  a 
limit 
$p\in X$
with respect to 
the topology of
 $X$. 
If
 $\{x_{i}\}_{i\in \zz_{\ge 0}}$
has a subsequence contained in 
$A$, 
then 
we can find 
$p\in X$
such that 
$x_{i}\to p$
as 
$i\to \infty$
since 
$(A, d)$
 is complete
 and 
 $\yosubmap$ 
 is isometric on 
 $(A, d)$. 
Thus, 
we may assume that 
$x_{i}\in X\setminus A$
for every 
$i\in \zz_{\ge 0}$
by taking 
a
subsequence if necessary. 
We divide 
the proof into two cases.

\yocase{1}{$\inf_{i\in \zz_{\ge 0}}w(x_{i}, A)>0$}
In this case, 
we can find 
$n\in \zz_{\ge 0}$
 such that 
$2^{-n+1}\le \inf_{i\in \zz_{\ge 0}}w(x_{i}, A)$. 
Then there exists 
$K\in \zz_{\ge 0}$
 such that 
for every 
$i\in \zz_{\ge 0}$
 with 
$K\le i$,
 we have 
$\yofmodisn(\yohsmapr(x_{i}, n), \yohsmapr(x_{K}, n))<1$. 
Then 
Statement 
\ref{item:hrle12}
 in 
Lemma 
\ref{lem:hrmap}
shows that 
$w(x_{i}, x_{K})\le 16\cdot 4^{-n-1}w(x_{K}, A)$. 
In this case, 
we have 
\begin{align*}
w(x_{i}, A)&\le w(x_{K}, A)+w(x_{i}, x_{K})
\le (1+16\cdot 4^{-n-1})w(x_{K}, A)
\\
&\le 17w(x_{K}, A).
\end{align*} 
Thus, 
we obtain 
\[
\sup_{i\in \zz_{\ge 0}}w(x_{i}, A)<\infty.
\] 
Put 
$C=\sup_{i\in \zz_{\ge 0}}w(x_{i}, A)$. 
Then
for every 
$\epsilon \in (0, \infty)$,  
we can find 
$N\in \yoomega$
such that 
$16\cdot 4^{-N-1}C\le \epsilon$. 
In this setting, 
for sufficiently large numbers 
$i, j\in \zz_{\ge 0}$, 
 we have 
$\yocoefff{N}(x_{i})=\yocoefff{N}(x_{j})=1$
and 
\[
\yofmodisn(\yohsmapr(x_{i}, N), \yohsmapr(x_{j}, N))<1.
\] 
Thus 
Statement 
\ref{item:hrle12} in 
Lemma 
\ref{lem:hrmap}
shows that 
\[
w(x_{i}, x_{j})\le 16\cdot 4^{-N-1}w(x_{i}, A)\le
16\cdot 4^{-N-1} C\le \epsilon.
\] 
Therefore 
the sequence  
$\{x_{i}\}_{i\in \zz_{\ge 0}}$
 is 
Cauchy in 
$(X, w)$. 
Hence, 
using the completeness of 
$w$, 
we can find a limit 
$p\in X$  
of 
$\{x_{i}\}_{i\in \zz_{\ge 0}}$.

\yocase{2}{$\inf_{i\in \zz_{\ge 0}}w(x_{i}, A)=0$}
We may assume that 
$w(x_{i}, A)\to 0$
as 
$i\to \infty$
by taking a subsequence if necessary. 
Since 
\[
\yowasdis{m}(\yohsmapl(x_{i}, 0), \yohsmapl(x_{i}, 0))\le 
\yoosmislldwdis{m}(\yosubmap(x_{i}), \yosubmap(x_{j})), 
\]
the sequence 
$\{\yohsmapl(x_{i}, 0)\}_{i\in \zz_{\ge 0}}$
 is 
Cauchy in 
$(\yowassp{A}, \yowasdis{d})$. 
Thus we can find 
$\theta \in \yowassp{A}$
 such that 
$\yohsmapl(x_{i}, 0)\to \theta$
as 
$i\to \infty$
since 
the space 
$(\yowassp{A}, \yowasdis{d})$
is complete 
(see 
\ref{item:wascomp}
in 
Proposition 
\ref{prop:wasisom}). 
This implies that 
the sequence 
$\{\yohsmapl(x_{i}, 0)\}_{i\in \zz_{\ge 0}}$ is 
also
Cauchy in 
$(\yowassp{A}, \yowasdis{m})$. 
For each 
$i\in \zz_{\ge 0}$, 
take 
$b_{i}\in A$ 
such that 
$w(x_{i}, b_{i})\le 2w(x_{i}, A)$. 
Then 
Lemma 
\ref{lem:wasfin1}
shows that 
\[
\yowasdis{m}(\yodmas{b_{i}}, \yohsmapl(x_{i}, 0))\le 
w(x_{i}, b_{i})\le 2w(x_{i}, A).
\] 
Thus, for sufficiently large 
numbers 
$i, j\in \zz_{\ge 0}$, 
using
Corollary 
\ref{cor:oswasext}, 
we have 
\begin{align*}
&w(b_{i}, b_{j})
=m(b_{i}, b_{j})
=
\yowasdis{m}(\yodmas{b_{i}}, \yodmas{b_{j}})\le \\
&
\yowasdis{m}(\yodmas{b_{i}}, \yohsmapl(x_{i}, 0))+
\yowasdis{m}(\yohsmapl(x_{i}, 0), \yohsmapl(x_{j}, 0))+\\
&\yowasdis{m}(\yohsmapl(x_{j}, 0), \yodmas{b_{j}})\\
&\le 
2w(x_{i}, A)+\yowasdis{m}(\yohsmapl(x_{i}, 0), \yohsmapl(x_{j}, 0))+2w(x_{j}, A)\to 
0, 
\end{align*}
as 
$i, j\to \infty$. 
This means that 
the sequence 
$\{b_{i}\}_{i\in \zz_{\ge 0}}$
 is 
Cauchy in 
$(A, m)$. 
Hence there exists 
$p\in A$ 
such that 
$b_{i}\to p$
as 
$i\to \infty$. 
Since 
$w(b_{i}, x_{i})\le 2w(x_{i}, A) \to 0$
 as
$i\to \infty$, 
we also see that 
$x_{i}\to p$ 
as 
$i\to \infty$.

In any case, 
we can find a subsequence of 
$\{x_{i}\}_{i\in \zz_{\ge 0}}$
possessig a limit  
$p\in X$. 
Due to the continuity of 
$\yosubmap$ 
(Lemma \ref{lem:jconti}), 
the Cauchy sequence 
$\{\yosubmap(x_{i})\}_{i\in \zz_{\ge 0}}$
has a limit 
$\yosubmap(p)$. 
Since 
$\yosubmap$ is a
topological embedding
and since 
the image 
$\yosubmap(X)$
is a complete subspace of  
$\yoosmislldwsp{A}$, 
we see that
$\yosubmap$ is 
a closed map. 
This finishes the proof. 
\end{proof}

\begin{rmk}
Some readers may 
feel that 
the proof of 
Proposition 
\ref{prop:jxclosed}
is strange 
because 
it does not use the 
essential 
information of 
$\yoosmislldwdis{d}$, 
or  because 
the whole space 
$\yoosmislldwsp{A}$ is not 
seem to be 
complete since 
neither is  
$S$. 
However, 
the embedding 
$\yosubmap$
is constructed using 
$w$
and 
the  
sequence of 
$(k, w)$-WD 
collections, 
which contains all information of 
the topology  of 
$X\setminus A$. 
Indeed, 
the family 
$\bigcup_{s\in \yoomega}\yowdce{s}$
becomes an open basis on 
$X\setminus A$. 
Therefore, 
we can draw out the 
information of
the topology and  the 
completeness 
from 
the embedding 
$\yosubmap$. 
\end{rmk}

To construct 
the map 
$\yomainmap$
stated in Theorem
 \ref{thm:main1}, 
 it should be 
noted that 
we can represent  
$\met{A}=\coprod_{\yoccl\in \yoslamet{X}}\yoccl$. 
For each 
$\yoccl\in \yoslamet{X}$, 
choose 
$m_{\yoccl}\in \yoccl$
and 
$w_{\yoccl}\in \met{X}$
such that 
$w_{\yoccl}|_{A^{2}}=m_{\yoccl}$. 
Using these
$m_{\yoccl}$ 
and 
$w_{\yoccl}$, 
and using 
the above  discussion, 
we obtain an isometric operator 
$\yosubmap^{\yoccl}\colon \yoccl\to \met{X}$. 
Then 
we define a map 
$\yomainmap\colon \met{A}\to \met{X}$
by 
\[
\yomainmap(d)
(x, y)=\yoosmislldwdis{d}
\left(\yosubmap^{\yoccl}(x), \yosubmap^{\yoccl}(y)\right). 
\]
Proposition 
\ref{prop:jhomeo}
shows that 
$\yomainmap(d)$
actually belongs to 
$\met{X}$
for all 
$d\in \yoccl$.

Now, 
let us complete the proof of 
Theorem \ref{thm:main1}. 
\begin{proof}[Proof of Theorem \ref{thm:main1}]
First, Proposition 
\ref{prop:jextension}
and 
Proposition 
\ref{prop:jxclosed}
 proves 
Statements
 \ref{item:m1ext} 
 and 
 \ref{item:m1comp}, 
 respectively. 
Statement 
\ref{item:m1bounded}
follows from 
Lemma 
\ref{lem:jbounded}.  
Now we shall show 
Statement  
\ref{item:m1isom}. 
Take
 $d, e\in \met{A}$.
If 
$\yocclq{d}\neq \yocclq{d}$,
then 
$\metdis_{A}(d, e)=\infty$. 
Hence  
we have 
$\metdis_{X}(\yomainmap(d), \yomainmap(e))=\infty
=\metdis_{A}(d, e)$. 
Thus, 
we only need to show that 
for a fixed class 
$\yoccl\in \yoslamet{A}$, 
and 
for every pair 
$d, e\in \yoccl$, 
we have 
\[
\metdis_{A}(d, e)
=\metdis_{X}(\yomainmap(d), \yomainmap(e)).
\]
Theorem 
\ref{thm:osmotic} 
indicates that 
\begin{align*}
&\metdis_{A}(d, e)\le 
\metdis_{X}(\yomainmap(d), \yomainmap(e))
=\metdis_{\yosubmap(X)}(\yoosmislldwdis{d}, 
\yoosmislldwdis{e})
\\
&\le 
\metdis_{\yoosmislldwsp{A}}(\yoosmislldwdis{d}, 
\yoosmislldwdis{e})
\\
&=
\metdis_{\yoosmisdwsp{A}}(\yoosmisdwdis{d}, 
\yoosmisdwdis{e})
=\metdis_{\yoosmisdwsp{A}}(\yoosmisdwdis{d}, 
\yoosmisdwdis{e})\\
&=
\metdis_{\yowassp{A}}(\yowasdis{d}, \yowasdis{e})
=
\metdis_{A}(d, e). 
\end{align*}
Therefore 
we conclude that 
$\metdis_{A}(d, e)
=\metdis_{X}(\yomainmap(d), \yomainmap(e))$.
This verifies Statement  
\ref{item:m1isom}.

Next we consider the 
``furthermore'' part of the theorem. 
Since 
$\met{X}$
is dense 
in 
$\yocpm{X}$ 
(\cite[Theorem 1.3]{Ishiki2024smbaire}), 
the isometric embedding 
$\yomainmap\colon \met{A}\to \met{X}$
can be (uniquely) 
extended to an isometric embedding 
$\yomainmapf\colon \yocpm{A}\to \yocpm{X}$
using Cauchy sequences
(see also 
\cite[Theorem 2]{MR0390999} and \cite{MR0969516}). 
Let us  show the equality 
\[
\yomainmapf(\met{A})=\yomainmapf(\yocpm{A})\cap \met{X}.
\] 
By the definitions of $\met{X}$, $\met{A}$ and $\yocpm{A}$, 
and the fact that 
$\yomainmapf|_{\met{A}}=\yomainmap$, 
the inclusion 
$\yomainmapf(\met{A})\yosub \yomainmapf(\yocpm{A})\cap \met{X}$ is true. 
Take $d\in \yomainmapf(\yocpm{A})\cap \met{X}$. 
Then  there exists $\psi\in \yocpm{A}$ such that 
$d=\yomainmapf(\psi)$. 
Since 
$\yomainmapf(\psi)|_{A\times A}=\psi$, 
and $d\in \met{X}$, 
we see that 
$\psi\in \met{A}$.
Thus $d\in \yomainmap(\met{A})$. 
This implies that 
$\yomainmapf(\met{A})=\yomainmapf(\yocpm{A})\cap \met{X}$. 
Since 
$\yocpm{A}$ is complete and 
$\yomainmapf$ is isometric, 
the subspace 
$\yomainmapf(\yocpm{A})$ is 
closed in 
$\yocpm{X}$. 
Namely, 
$\yomainmap(\met{A})$ 
is closed in 
$\met{X}$. 
This finishes 
the whole of the 
proof of Theorem 
\ref{thm:main1}. 
\end{proof}



\section{Questions}\label{sec:ques}

In the paper 
\cite{koshino2024borel}, 
as a consequence of research on Borel hierarchy 
of $\met{X}$, 
Koshino proved that, 
when $X$ is separable, 
$\met{X}$ is completely metrizable if and only if 
$X$ is $\sigma$-compact (\cite[Corollary]{koshino2024borel}). 
Motivated by the 
aim 
 to remove the assumption of the  separability of 
 $X$ in  
 this theorem, we propose the following conjecture. 
\begin{conj}\label{conj:unctb}
Recall that 
$\aleph_{1}$
stands for  the first uncountable 
cardinality, and 
let 
$D_{\aleph_{1}}$ denote the
the discrete space of cardinality of 
$\aleph_{1}$. 
Under this notations, 
the space
$\met{D_{\aleph_{1}}}$ is not 
completely metrizable. 
\end{conj}
Take a non-separable metrizable space 
$X$. 
Then 
$X$ 
contains 
$D_{\aleph_{1}}$ 
as a closed subset. 
Thus, 
$\met{X}$ contains 
$\met{D_{\aleph_{1}}}$ as a closed subset 
due to 
Theorem \ref{thm:main1}. 
If Conjecture 
\ref{conj:unctb}
is true, then  
the space 
$\met{X}$
 would 
not be completely metrizable. 
Namely, 
the complete metrizability of 
$\met{X}$ 
would imply
the separability of
 $X$. 
This observation is a reason why 
the author supports Conjecture 
\ref{conj:unctb}. 

\begin{ques}
Let 
$X$
 be a 
 metrizable space, 
 and 
 $A$
 be a closed subset of 
 $X$.
 Can we obtain 
 a linear 
 isometric operator 
 extending metrics?
 Namely, 
does there exists an
extensor 
$G\colon \met{A}\to\met{X}$
satisfying the conclusions in Theorem 
\ref{thm:main1} and the following 
additional condition? 
\begin{itemize}
\item 
For every pair $d, e\in \met{A}$, and 
$s, t\in (0, \infty)$, 
we have 
\[
G(s\cdot d +t\cdot e)=
s\cdot G(d)+t\cdot G(e). 
\]
\end{itemize}
The author thinks that 
this questions is true 
if 
we can prove 
that every pair 
$d, e\in \met{A}$ satisfies that 
$\yowasdis{d+e}=\yowasdis{d}+\yowasdis{e}$; 
however, it seems to be  quite difficult. 
Thus, we need another osmotic construction 
preserving linear combinations  of metrics instead of 
$\yowas$. 
\end{ques}

In a non-Archimedean case, 
we can obtain an 
isometric extensor 
preserving fractal dimensions
(see \cite[Theorem 4.7]{Ishiki2022factor})
such as 
the Hausdorff dimension, the packing dimension, the upper box dimension, and the Assouad dimension. 
We ask whether we can obtain Archimedean 
analogue of this result.

\begin{ques}
Let 
$X$
 be a 
 metrizable space, 
 and 
 $A$
 be a closed subset of 
 $X$.
 Assume that 
 $A$ 
 and
  $X$ 
  have 
 the same topological dimensions. 
For each 
$d\in \met{A}$, 
does there exist a metric 
$D\in \met{X}$
such that 
the fractal dimensions 
of 
$d$ 
and
 $D$ 
 are identical? 
Moreover, 
can we obtain an 
isometric extensor 
of metrics preserving 
fractal dimensions?
\end{ques}

A metric on a set 
$Z$ 
is said to be 
\yoemph{proper}
if 
every 
 bounded set in 
$(Z, d)$
is compact. 
For a metrizable space 
$X$, 
we denote  by 
$\yoprmet{X}$
the set of all 
$d\in \met{X}$
that is proper.
In the 
paper 
\cite{MR4527953}, 
the author 
obtained 
an analogue of 
Hausdorff's metric extension theorem 
for proper metrics. 
It is interesting 
to ask 
whether we 
construct a 
simultaneous extension of 
proper metrics or not.

\begin{ques}
Let 
$X$
 be a 
 second-countable 
 locally compact Hausdorff space, 
 and 
 $A$
 be a closed subset of 
 $X$.
Does there exist
an extensor 
$F\colon \met{A}\to \met{X}$
satisfying the conclusions of 
Theorem \ref{thm:main1}
and the  additional 
condition that 
$F(\yoprmet{A})\yosub \yoprmet{X}$?
\end{ques}

\begin{ques}
Similarly to 
$\met{X}$, 
can we investigate 
the topology and comeager subsets 
of 
the space of proper metrics 
equipped with the supremum metric?
\end{ques}



\bibliographystyle{myplaindoidoi}
\bibliography{../../bibtexmet/bibmet.bib}



\end{document}